\documentclass[a4paper,12pt]{amsart}

\usepackage[margin=3cm]{geometry} 
\usepackage{amsmath,amssymb,amsxtra,amscd,mathrsfs}
\usepackage{graphicx}
\usepackage[colorlinks=true, allcolors=blue]{hyperref}
\usepackage[all]{xy}
\usepackage{bm}
\usepackage{comment}

\numberwithin{equation}{section}

\newtheorem{theorem}{Theorem}[section]
\newtheorem{lemma}[theorem]{Lemma} 
\newtheorem{proposition}[theorem]{Proposition} 
\newtheorem{corollary}[theorem]{Corollary}

\theoremstyle{definition}
 
\newtheorem{notation}[theorem]{Notation} 
\newtheorem{remark}[theorem]{Remark} 
\newtheorem{example}[theorem]{Example}

\newcommand{\C}{\mathbb{C}} 
\newcommand{\Z}{\mathbb{Z}}
\newcommand{\N}{\mathbb{N}}
\newcommand{\G}{\mathbb{G}}
\newcommand{\I}{\mathbb{I}}
\newcommand{\J}{\mathbb{J}}

\newcommand{\PP}{\mathbb{P}} 
\newcommand{\Q}{\mathbb{Q}} 
\newcommand{\R}{\mathbb{R}}
\newcommand{\bbS}{\mathbb{S}}
\newcommand{\T}{\mathbb{T}}
\newcommand{\W}{\mathbb{W}}

\newcommand{\bc}{\boldsymbol{c}} 
\newcommand{\bone}{\mathbf{1}} 

\newcommand{\bt}{\mathbf{t}}
\newcommand{\btau}{{\bm{\tau}}}

\newcommand{\cH}{\mathcal{H}}
\newcommand{\cL}{\mathcal{L}}
\newcommand{\cM}{\mathcal{M}}
\newcommand{\cO}{\mathcal{O}}

\newcommand{\ovcM}{\overline{\cM}}

\newcommand{\adj}{\operatorname{adj}}
\newcommand{\ev}{\operatorname{ev}} 
\newcommand{\gt}{{\mathfrak{g}/\mathfrak{t}}}
\newcommand{\loc}{{\operatorname{loc}}} 
\newcommand{\newsigma}{\varsigma}
\newcommand{\pol}{{\operatorname{pol}}}
\newcommand{\pt}{\operatorname{pt}} 
\newcommand{\rank}{\operatorname{rank}}

\newcommand{\vir}{{\operatorname{vir}}} 
\newcommand{\NE}{\operatorname{NE}_{\N}} 
 
\newcommand{\End}{\operatorname{End}} 
\newcommand{\Fab}{F_{\operatorname{ab}}} 
\newcommand{\Fl}{\operatorname{Fl}} 
\newcommand{\FlG}{\operatorname{Fl}} 
\newcommand{\FlT}{\operatorname{Fl}_{\operatorname{ab}}} 
\newcommand{\Frac}{\operatorname{Frac}} 
 
\newcommand{\Gr}{\operatorname{Gr}} 
\newcommand{\GL}{\operatorname{GL}}
\newcommand{\Hom}{\operatorname{Hom}}
\newcommand{\Hyp}{\operatorname{Hyp}}

\newcommand{\Prin}{\operatorname{Prin}}
\newcommand{\QH}{\operatorname{QH}}
\newcommand{\Res}{\operatorname{Res}}

\def\corr#1{\left\langle#1 \right\rangle} 
\def\parfrac#1#2{\frac{\partial #1}{\partial #2}}

\title{A mirror theorem for partial flag bundles}

\author{Ionu\c t Ciocan-Fontanine} 
\email{ciocan@gate.sinica.edu.tw}
\address{Institute of Mathematics, Academia Sinica, Astronomy-Mathematics Building, No.\ 1, Sec.\ 4, Roosevelt Road, Taipei 10617, Taiwan.}

\author{Yuki Koto} 
\email{ykoto@gate.sinica.edu.tw}
\address{Institute of Mathematics, Academia Sinica, Astronomy-Mathematics Building, No.\ 1, Sec.\ 4, Roosevelt Road, Taipei 10617, Taiwan.}

\begin{document}

\begin{abstract}
    We construct a family of points on the Lagrangian cone of a partial flag bundle associated to a (possibly non-split) vector bundle from any Weyl-invariant $I$-function of a prequotient.
    This result can be seen as the nonabelian analogue of the mirror theorem for projective bundles \cite{IK:quantum}, and generalizes Oh's mirror theorem for split partial flag bundles \cite{Oh:split_flag_bundle}.
    
\end{abstract}

\maketitle

\tableofcontents

\section{Introduction}

Genus zero Gromov--Witten invariants of a smooth projective variety $X$ are given by virtual counts of rational curves on $X$.
They satisfy the expected axioms \cite{KM,Behrend}, and their generating function underlies very rich structures.
In particular, a (formal) Frobenius manifold can be constructed from the generating function, yielding the \emph{quantum cohomology ring} $\QH^*(X)$.

Givental formalism \cite{Givental:symplectic} introduces the \emph{Givental Lagrangian cone} $\cL_X$, a geometric object with special properties, which encodes the genus zero Gromov--Witten theory of $X$. 
An explicit formula describing (a family of) points on $\cL_X$ (historically, such formulas are often called  \emph{mirror theorems}, and we will follow this terminology) is a powerful tool for the study of $\QH^*(X)$.
We give some examples.
\begin{itemize}
    \item 
    In \cite{Iritani:convergence}, a mirror theorem for toric varieties was used to show that quantum cohomology of toric varieties is convergent.
    (This result was generalized to \emph{split} toric bundles \cite{Koto:convergence}, which are toric bundles obtained as GIT quotients by an algebraic torus of a direct sum of line bundles.)
    \item 
    In \cite{IK:quantum}, a mirror theorem for a projective bundle $\PP(V)$ (associated to a vector bundle $V\to B$) was established and was used to construct a decomposition $\QH^*(\PP(V))\cong \QH^*(B)^{\oplus\rank(V)}$. 
    \item 
    Dubrovin's conjecture \cite{Dubrovin:geometry} for Grassmannians was proved in \cite{Ueda:Dubrovin_Grassmannians}, and the Gamma conjectures were formulated and proved for Grassmannians in \cite{GGI}.
    In their proofs, the mirror theorem for Grassmannians from \cite{BCFK:two} plays an important role.
\end{itemize}

The purpose of this paper is to prove a mirror theorem for partial flag bundles (Theorem \ref{thm:mirror_Fl}).
Before stating it, we briefly review known mirror theorems related to ours.
Mirror theorems for partial flag varieties were established first in \cite{BCFK:two} (for Grassmannians), and then in \cite{BCFK:ab/nonab} (for general flag varieties, but only on the ``small" parameter space), \cite{CFKS} (on the ``big" parameter space).
These were extended to {\it split} partial flag bundles \cite{Oh:split_flag_bundle}.
On the other hand, the mirror theorem for a (possibly non-split) projective bundle $\PP(V)$ was given in \cite{IK:quantum} in terms of the ($\C^\times$-equivariant) $J$-function of $V$.
Theorem \ref{thm:mirror_Fl} generalizes these results to {\it all} (i.e., possibly non-split) partial flag bundles.

To simplify the notation, we state the result for Grassmann bundles in the introduction.
Let $B$ be a smooth projective variety, let $V\to B$ be a vector bundle whose dual is globally generated, and let $r$ be a positive integer less than $\rank(V)$.
Let $\Gr(r,V)\to B$ denote the Grassmann bundle whose fiber at $b\in B$ is the Grassmannian $\Gr(r,V_b)$ of $r$-dimensional subspaces of $V_b$.
The following is the main theorem of this paper (applied to the Grassmann bundle $\Gr(r,V)$).

\begin{theorem}[Special case of Theorem \ref{thm:mirror_Fl}]\label{thm:mirror_Gr}
    Let $B$, $V$, and $r$ be as above.
    Let $\T=(\C^\times)^r$ act on $V^{\oplus r}$ by scaling each component diagonally, and let $\W:=\mathfrak{S}_r$ act on $V^{\oplus r}$ by permuting the components.    
    Let $z\cdot I^{\lambda_\bullet}_{V^{\oplus r}}(x,z)$ be a $\W$-invariant function lying on the Givental cone of $V^{\oplus r}$ that depends on the $\T$-equivariant parameters $\lambda_1,\dots,\lambda_r$ polynomially.
    Define the $H^*(\Gr(r,V))$-valued function $I_{\Gr(r,V)}(t,x,z)$ as follows:
    \begin{multline*}
    I_{\Gr(r,V)}(t,x,z) = \sum_{k\geq0} q^ke^{\left(k+\frac{H}{z}\right)t} \sum_{ \substack{k_1,\dots,k_r\geq0; \\ \sum_{i=1}^rk_i=k } } \frac{ I^{H_\bullet+k_\bullet z}_{V^{\oplus r}}(x,z)\cdot(-1)^{k(r-1)} }{\prod_{i=1}^{r} \prod_{c=1}^{k_i} \prod_{ \substack{\delta\colon\mathrm{Chern\ roots} \\ \mathrm{of\ } V} } (H_i + \delta + cz)}	\\
    \cdot \prod_{1\leq i<j\leq r}\frac{H_i-H_j+(k_i-k_j)z}{H_i-H_j}
    \end{multline*}
    where $q$ denotes the Novikov variable for the line in fibers, $H_1,\dots,H_r$ denote the Chern roots of the dual of the tautological subbundle over $\Gr(r,V)$, while $H=H_1+\dots +H_r$, and $I^{H_\bullet+k_\bullet z}_{V^{\oplus r}}$ denotes the function $I^{\lambda_\bullet}_{V^{\oplus r}}$ with each $\lambda_i$ replaced with $H_i+k_iz$ for $1\leq i\leq r$.
    Then, $z\cdot I_{\Gr(r,V)}(t,x,z)$ lies in the Givental cone of $\Gr(r,V)$.
\end{theorem}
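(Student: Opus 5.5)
Since Theorem \ref{thm:mirror_Gr} is the special case of Theorem \ref{thm:mirror_Fl} for one step of the flag, the plan is to prove it by abelian/nonabelian correspondence. We realize $\Gr(r,V)$ as a GIT quotient of $\Hom(\C^r,V^\vee)^\vee\cong V^{\oplus r}$, a prequotient over $B$, by $G=\GL_r$ (with the stability condition on fibers). The associated abelian quotient by $\T=(\C^\times)^r$ is the fiber product $\PP(V)\times_B\cdots\times_B\PP(V)$. This is an iterated projective bundle, and we identify the $\T$-equivariant $H_i$ with the fiberwise hyperplane classes $c_1(\cO_{\PP(V)}(1))$ pulled back from the $i$-th factor.

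\textbf{Step 1 (abelian $I$-function).} Apply the projective bundle mirror theorem of \cite{IK:quantum}, in its twisted/equivariant form, to the product of projective bundles. This shows that the function
\[
\sum_{k_\bullet\ge0}\prod_i q_i^{k_i}e^{(k_i+H_i/z)t_i}\,\frac{I^{H_\bullet+k_\bullet z}_{V^{\oplus r}}(x,z)}{\prod_i\prod_{c=1}^{k_i}\prod_\delta(H_i+\delta+cz)}
\]
lies on $\cL_{\Fab}$. The mechanism is that \cite{IK:quantum} builds a point on the cone of $\PP(V)$ from a $\C^\times$-equivariant point on the cone of $V$ by substituting $\lambda\mapsto H+kz$. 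Doing this for all $r$ factors uses the $\T$-equivariant point $zI^{\lambda_\bullet}_{V^{\oplus r}}$ and needs polynomial dependence on $\lambda$, which is exactly what the hypothesis supplies. I would argue this step either by iterating \cite{IK:quantum} one factor at a time, or by rerunning its fixed-point/recursion characterization (the Brown--Coates--Givental criterion) for $\T$-localization on the fibers.

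\textbf{Step 2 (twist by the roots).} Twist by the bundle $\bigoplus_{i\ne j}L_i\otimes L_j^{-1}$ of roots, where $L_i=\cO(H_i)$, using the Euler class with an auxiliary equivariant parameter. By the quantum Lefschetz/twisting theorem (Coates--Givental), multiplying by the hypergeometric modification
\[
\prod_{i\ne j}\frac{\prod_{c\le k_i-k_j}(H_i-H_j+cz)}{\prod_{c\le 0}(H_i-H_j+cz)}
\]
keeps the function on the twisted cone. After taking the non-equivariant limit and specializing $q_i\mapsto (-1)^{r-1}q$, this product collapses to the Vandermonde-type ratio $\prod_{i<j}\frac{H_i-H_j+(k_i-k_j)z}{H_i-H_j}$ times the sign $(-1)^{k(r-1)}$.

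\textbf{Step 3 (descent to $\Gr(r,V)$).} Use the $\W$-invariance of $I^{\lambda_\bullet}_{V^{\oplus r}}$ to see that the resulting function is $\W$-invariant. Then invoke the abelian/nonabelian correspondence for the genus zero cone in the form of \cite{CFKS} (equivalently, Webb's version), relative to the base $B$: a $\W$-invariant point on the twisted cone of $\Fab$ descends, via $H^*(\Gr)\cong H^*(\Fab)^{\W}/\mathrm{ann}(\prod_{i<j}(H_i-H_j))$, to a point on $\cL_{\Gr(r,V)}$.

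\textbf{Main obstacle.} The hard step is Step 3 in the non-split setting. The existing correspondence proofs use torus localization on the whole target, or a split (toric) structure of the base, and here $V$ is arbitrary. I would first try to localize only along the fibers, using the maximal torus of $\GL_r$ and the $\C^\times$ scaling $V$, and run the Brown/fixed-point characterization of cone points relative to $B$, as \cite{IK:quantum} did for $\PP(V)$. The required recursion and polynomiality conditions would be checked via the $\W$-symmetrization. The delicate part is matching the edge contributions from root directions with the twisting factor in Step 2, and proving polynomiality of the resulting Laurent expansions.
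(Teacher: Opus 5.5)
Your overall architecture (abelian point on $\PP(V)^{\times_B r}$, $\mathfrak{g}/\mathfrak{t}$-modification, $\W$-descent) matches the heuristic behind the theorem, and the abelian point in Step 1 can be taken from \cite[Theorem 5.1]{Koto:mirror}. A minor point: the hypergeometric factor alone already equals $(-1)^{k(r-1)}\prod_{i<j}\frac{H_i-H_j+(k_i-k_j)z}{H_i-H_j}$, so if you also substitute $q_i\mapsto(-1)^{r-1}q$ the two signs cancel. Use one convention, not both.

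Step 3, however, is a genuine gap. The abelian/nonabelian correspondence for $V^{\oplus r}/\!/\GL_r$ with $V$ non-split is not known. It is exactly the case of \cite[Conjecture 1.8]{CLS} that Theorem \ref{thm:mirror_Gr} establishes for this abelian point, so invoking it is circular. The route you propose for proving it also cannot work, because neither torus you name acts on the quotients. The maximal torus of $\GL_r$ is part of the group being divided out, and the scalar $\C^\times$ on $V$ fixes every $r$-plane in $V_b$. For non-split $V$ there is no fiberwise torus action on $\Gr(r,V)$, or on $\PP(V)^{\times_B r}$, at all. So there are no fixed loci, no one-dimensional orbits, and no Brown-type recursion relative to $B$. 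The same objection applies to your alternative proof of Step 1. Nor is this what \cite{IK:quantum} did for $\PP(V)$; they used an embedding into a trivial bundle.

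The missing idea is to remove the non-splitness before localizing. Because $V^\vee$ is globally generated there is an exact sequence $0\to V\to\cO_B^{\oplus N}\to\mathcal{Q}\to0$. Then $\Gr(r,V)$ is the zero locus of a regular section of the convex bundle $\mathcal{Q}\boxtimes S^\vee$ on the \emph{trivial} bundle $B\times\Gr(r,N)$. By Lemma \ref{lem:I-function_of_subvarieties} and the splitting principle, it suffices to put a function $F(\mu)$ on the $(\mathcal{Q}\boxtimes S^\vee,e_\mu)$-twisted cone of $B\times\Gr(r,N)$.

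On that trivial bundle the torus $\bbS=(\C^\times)^N$ does act fiberwise with isolated fixed points. A Givental--Brown characterization (Theorem \ref{thm:characterization}) therefore reduces the (twisted) abelian/nonabelian correspondence for $B\times\Gr(r,N)$ to the known case of $\Gr(r,N)$ from \cite{CFKS}. What remains is an abelian twisted statement on $B\times(\PP^{N-1})^{r}$. This is proved with quantum Riemann--Roch operators (Lemma \ref{lem:QRR_operators}) applied to Brown's $I$-function. The key input is that $V^{\oplus r}\oplus\mathcal{Q}^{\oplus r}$ is $K$-theoretically trivial. This lets you write $\prod_i\Delta_{\mathcal{Q}}^{\lambda_i}$, applied to a miniversal $\W$-invariant point of $V^{\oplus r}$, in the form $e^{D/z}J_B$.
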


\begin{remark}
    To study the Givental cone $\cL_{\Gr(r,V)}$, the assumption that $V^\vee$ is globally generated is not necessary.
    In fact, by replacing $V$ with $V\otimes L$ where $L$ is a suitable anti-ample line bundle over $B$, we can always make $V$ satisfy the assumption without changing the bundle $\Gr(r,V)$.
\end{remark}

Theorem \ref{thm:mirror_Fl} recovers the $I$-function of projective bundles \cite[Theorem 3.3]{IK:quantum} and Oh's $I$-function of split partial flag bundles \cite[Theorem 1.1]{Oh:split_flag_bundle}; see Example \ref{ex:IK}, \ref{ex:Oh}. It can be viewed as an instance of the \emph{abelian/nonabelian correspondence} for partial flag bundles. We explain this, again restricting to the case of Grassmann bundles for simplicity. 

Let $\G$ be a complex reductive group and fix a maximal torus $\T$ in $\G$. Let $X$ be a quasi-projective variety with a $\G$-action and a $\G$-linearized ample line bundle, so that we have the GIT quotient $Y:=X/\!/\G$. Restricting both the action and the linearization to $\T$, we also have the \emph{abelian GIT quotient} $Y_{\operatorname{ab}}:=X/\!/\T$. For example, given a vector bundle $V\to B$, we may take $X=V^{\oplus r}\cong\operatorname{Hom}(\C^r,V)$ and $\G=\GL_r(\C)$, with the obvious action, linearized by the determinant character. In this case, the nonabelian quotient is precisely the Grassmann bundle $\Gr(r,V)=V^{\oplus r}/\!/\GL_r(\C)$, while the abelian quotient
$\Gr(r,V)_{\operatorname{ab}}=V^{\oplus r}/\!/(\C^\times)^r$ is the fiber product of $r$ copies of $\PP(V)$ over $B$.

Associated to the pair $(\G,\T)$ we have the Weyl group $\W$, the pair of Lie algebras $\mathfrak{g}:=Lie(\G), \mathfrak{t}:=Lie(\T)$, and the root system $\Phi$. The Weyl group acts on $Y_{\operatorname{ab}}$. The (split) adjoint $\T$-representation $\mathfrak{g}/\mathfrak{t}=\oplus_{\alpha\in\Phi}\mathfrak{g}_{\alpha}$ induces a split vector bundle $\oplus_{\alpha\in\Phi }L_{\alpha}$ on $Y_{\operatorname{ab}}$, which we will also call $\mathfrak{g}/\mathfrak{t}$ by abuse of notation. Informally, the \emph{abelian/nonabelian correspondence}, introduced and first studied in \cite{BCFK:ab/nonab,CFKS}, predicts that (under reasonable assumptions on the GIT set-up) the genus zero Gromov--Witten theory of the nonabelian quotient $Y$ is obtained by a kind of ``average over $\W$" of the genus zero Gromov--Witten theory of $Y_{\operatorname{ab}}$ \emph{twisted by the Lie algebra bundle $\mathfrak{g}/\mathfrak{t}$}. Precise conjectures resulting from this predicted relationship were stated in \cite{BCFK:ab/nonab,CFKS}, and some reformulations were given more recently in \cite{CLS}. Theorem \ref{thm:mirror_Gr} fits into this picture as follows.
A mirror theorem for the abelian quotient $\Gr(r,V)_{\operatorname{ab}}$ was proved in \cite[Theorem 5.1]{Koto:mirror}, giving an explicit point $z\cdot I_{\Gr(r,V)_{\operatorname{ab}}}(t,x,z)$. This point is $\W$-invariant and we may construct its ``hypergeometric modification" corresponding to the twisting bundle $\mathfrak{g}/\mathfrak{t}$. This modification gives precisely the function $z\cdot I_{\Gr(r,V)}(t,x,z)$, so the conclusion of Theorem \ref{thm:mirror_Gr} verifies that \cite[Conjecture 4.3]{BCFK:ab/nonab} and \cite[Conjecture 1.8]{CLS} hold true for the nonabelian quotient $\Gr(r,V)$. 

Let us explain briefly the steps involved in the proof of our main result, Theorem \ref{thm:mirror_Fl}. The first is a nonabelian version of an idea from \cite{IK:quantum} and \cite{Koto:mirror}. For a flag bundle $Fl(r_\bullet, V)$ associated to a vector bundle $V$ on $B$ with $V^\vee$ globally generated, we construct an embedding 
\[ 
\Fl(r_\bullet, V)\hookrightarrow B\times\Fl(r_\bullet, \C^N)
\]
into a larger {\em trivial} flag bundle. This realizes $\Fl(r_\bullet, V)$ as the zero locus of a regular section of a certain convex vector bundle, so (by \cite{KKP:functoriality}), the mirror theorem for $\Fl(r_\bullet, V)$ is reduced to proving Lemma \ref{lem:twisted_function}, which is a mirror theorem for {\em twisted} Gromov--Witten theory of the product $B\times\Fl(r_\bullet, \C^N)$.

The second step (see $\S 6.1$) further reduces Lemma \ref{lem:twisted_function} to an analogous mirror theorem for twisted theory of a trivial toric bundle. To accomplish this reduction, we establish the following abelian/nonabelian correspondence for products.

\begin{theorem}[Non-equivariant version of Theorem \ref{thm:abelian/nonabelian_correspondence}]\label{thm:intro_ab/nonab}
    Let $\FlG$ be a partial flag variety (viewed as a GIT quotient), and let $\FlT$ denote the corresponding abelian quotient (a smooth, projective toric variety).
    Then, the $\gt$-modification (defined in Section~\ref{subsec:statement_of_ab/nonab}) sends an arbitrary $\W$-invariant point of the Givental cone of $B\times\FlT$ satisfying the Divisor Equation~\eqref{eqn:Divisor_Equation} into the Givental cone of $B\times\FlG$.
\end{theorem}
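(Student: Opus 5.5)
We will prove Theorem~\ref{thm:intro_ab/nonab} by reducing it to the known abelian/nonabelian correspondence for flag varieties, using the structure of the Givental cone of a product. We treat the $\T$-equivariant version (Theorem~\ref{thm:abelian/nonabelian_correspondence}) and specialize at the end. The natural tool is the characterization of points of a Givental cone by fixed-point localization (Brown, Coates--Givental style). We equip $\FlT$ and $\FlG$ with the action of the big torus $S=(\C^\times)^N$ acting on $\C^N$, so both have isolated fixed points. We then work with $B\times\FlT$ and $B\times\FlG$, whose $S$-fixed loci are copies of $B$.

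A point $f$ with coefficients in $H^*_S(B\times Y)_{\loc}$ lies on $\cL_{B\times Y}$ if and only if three conditions hold:
\begin{itemize}
\item[(a)] each restriction $f|_{B\times\{p\}}$ lies on the (suitably twisted) cone of $B$;
\item[(b)] the poles of $f|_p$ at $z=$ (weight of an $S$-invariant curve $\ell$ through $p$)$/d$ satisfy the recursion relating their residues to $f|_{p'}$ at the other end of $\ell$;
\item[(c)] the Laurent expansions have the required polynomiality.
\end{itemize}
This criterion holds because the $S$-fixed loci of moduli of stable maps to $B\times Y$ are described by graphs whose vertices carry stable maps to $B$ and whose edges are multiple covers of fixed curves in the fiber $Y$. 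The proof is then a verification of (a)--(c) for the $\gt$-modification $f^{\gt}$ of a $\W$-invariant point $f\in\cL_{B\times\FlT}$.

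First, we relate fixed points. Each $S$-fixed point $p$ of $\FlG$ lifts to a free $\W$-orbit of $S\times\T$-fixed data on the abelian side. At the level of cohomology, the modification multiplies by the hypergeometric factor
\[
\prod_{\alpha\in\Phi}\frac{\prod_{c\le d\cdot\alpha}(\alpha+cz)}{\prod_{c\le 0}(\alpha+cz)},
\]
and then restricts along $H^*(\FlT)^{\W}\to H^*(\FlG)$ via the Martin-type description (dividing by $\prod_{\alpha}\alpha$).

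For condition~(a), at a fixed point the modified restriction equals a sum over lifts of abelian restrictions times explicit factors in $z$ and equivariant parameters. We use the Divisor Equation~\eqref{eqn:Divisor_Equation}, where the Novikov variable of the fiber appears together with $e^{Ht/z}$, to rewrite the shift $\lambda\mapsto\lambda+dz$ and the $q^d$ factors as an operator of the form $e^{d\cdot z\partial}$ applied to $f|_p$. This realizes the modified restriction as a twisted-$B$ cone point, via the Coates--Givental quantum Lefschetz/twisting theorem for $B$ with a split twist. The $\W$-invariance guarantees that the sum over the Weyl orbit is well defined and symmetric.

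For condition~(b), poles of $f^{\gt}|_p$ come from two sources: poles already present in the abelian restrictions, and zeros of the denominators $\alpha+cz$. The abelian recursion covers the fixed curves of $\FlT$ lying over fixed curves of $\FlG$. The poles at $z=-\alpha/c$ for roots $\alpha$ correspond to curves in the abelian quotient that do not descend; their residues must cancel in the Weyl-symmetrized sum. We prove this by pairing each lift with its image under the reflection $s_\alpha$: the numerator factor $\prod(\alpha+cz)$ together with $\W$-invariance makes the residues opposite. Condition~(c) follows from the polynomial dependence on the parameters.

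The main obstacle is this cancellation, together with matching the recursion coefficients for genuine curves of $\FlG$ against those of $\FlT$ twisted by $\gt$, uniformly over the base $B$. For flag varieties alone this was done in \cite{BCFK:ab/nonab,CFKS}. Our first attempt will be to show that their computation happens fiberwise in $z$ and equivariant weights and is insensitive to the $H^*(B)$-coefficients, so that it carries over verbatim once condition~(a) has been established via the Divisor Equation.
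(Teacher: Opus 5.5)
\paragraph{Where the proposal breaks down.} The step you call ``the main obstacle'' is the whole content of the theorem, and the way you propose to fill it does not work.

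\emph{No recursion-level statement to transport.} The flag-variety results you want to carry over (\cite{BCFK:ab/nonab,CFKS}) are proved and stated at the level of small $J$-functions and formal Frobenius manifolds. That is how the paper uses them, in Proposition~\ref{prop:correspondence_J-function}. They contain no pointwise statement that $(\cdot)_\gt$ turns solutions of the abelian localization recursion into solutions of the nonabelian one. So there is nothing to carry over ``verbatim''.

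\emph{No fiberwise application.} For an arbitrary $f\in\cL_{B\times\FlT}$, the $B$- and fiber directions are entangled through invariants of the product. Hence the $H^*(B)$-components of $f$ are not points or tangent vectors of $\cL_{\FlT}$, and the flag case cannot be applied to them.

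\emph{The pole bookkeeping is wrong.} For each pair $\pm\alpha$, the factors of $\Hyp$ combine to $(-1)^n(\alpha+nz)/\alpha$ with $n=d\cdot\alpha$. So the modification creates no $z$-poles at $\bbS$-fixed points. Via the Divisor Equation one has, schematically, $\iota_p^*f_\gt=\omega(\tilde p)^{-1}[z\partial_\omega\,\iota_{\tilde p}^*f]$ after the specialization in \eqref{eqn:z_partial_omega_I}. What must disappear are therefore poles of $\iota_{\tilde p}^*f$ at root weights.

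\emph{For $l\ge 2$ those root weights coincide with weights of genuine $\FlG$-edges.} For $\Fl(1,2;3)$ at the lift of the standard flag, $\nu_2-\nu_1$ occurs twice in $T\FlT$: once along $\Hom(\C^1,\C^2)$, which descends, and once as a root direction of $\operatorname{GL}_2$. So the $\bbS$-action on $\FlT$ is not GKM there, and the residue at the corresponding pole comes from a whole family of invariant curves. It cannot be split into a ``descending'' part matched with $\FlG$ and a part that cancels under $s_\alpha$-pairing.

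\emph{Condition (a).} The relevant mechanism is the polynomial operator $z\partial_\omega$ in the $t$-variables. It is not an exponential shift combined with quantum Lefschetz.

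\paragraph{How the paper avoids this.} The paper never analyzes recursions on the abelian side.
\begin{enumerate}
\item It proves the case $B=\pt$ (Lemma~\ref{lemma:ab/nonab for flag}) from \eqref{eqn:z_partial_omega_I}, Proposition~\ref{prop:correspondence_J-function} and overruledness.
\item For general $B$ it reduces, by the same argument (Remark~\ref{rem:CFKS implies CLS}), to the fundamental-solution identity of Proposition~\ref{prop:correspondence_J-function_product}.
\item It proves that identity by applying Theorem~\ref{thm:characterization} only to the explicit miniversal family $\I=e^{D/z}J_B\,I^\bbS_{\FlT}$. Its $s$-coefficients factor as a $\nu$-free derivative of $J_B$ times the $\gt$-modification of a tangent vector to $\cL_{\FlT,\bbS}$. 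Hence $\mathbf{(C1)}$ and $\mathbf{(C2)}$ follow from the case $B=\pt$ and the $B$-independence of the recursion coefficients, while $\mathbf{(C3)}$ is checked directly.
\end{enumerate}
This factorization is what makes your ``insensitive to $H^*(B)$'' intuition rigorous, but it exists only for such a product-type family. The passage to arbitrary $\W$-invariant points then goes through $M_{B\times\FlT,\bbS}(\varsigma)\omega$ and the Divisor Equation, not through localization.
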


In fact, we prove a more general torus-equivariant version, see Theorem~\ref{thm:abelian/nonabelian_correspondence}.
The key ingredients of the proof are a Givental/Brown-type characterization of points on the torus-equivariant Givental cone of $B\times\FlG$ (Theorem \ref{thm:characterization}), and the abelian/nonabelian correspondence for partial flag varieties \cite{CFKS}.

In the third and last step, we prove in $\S 6.4$ the required mirror theorem for the twisted theory of $B\times\FlT$ by the methods of \cite{IK:quantum} and \cite{Koto:mirror}, which are based on an analysis of the action of certain differential operators on the Givental cone, especially the quantum Riemann-Roch operators \cite{CG:quantum}.

Our main theorem (Theorem \ref{thm:mirror_Fl}) has an application.
It provides a nonabelian counterpart of the Fourier transforms introduced in \cite{IK:quantum,Iritani:blowups} and enables one to study quantum cohomology of general partial flag bundles, e.g., to prove a decomposition of quantum cohomology/quantum $D$-module analogous to those in \cite{IK:quantum,Iritani:blowups}. These will be discussed in the follow-up paper \cite{Koto:GIT}. Another application is the extension of Theorem \ref{thm:intro_ab/nonab} (the full abelian/nonabelian correspondence) from trivial flag bundles to arbitrary flag bundles. This will be also pursued in \cite{Koto:GIT}, where it will be shown that by choosing appropriate $I$-functions for the vector bundle $V$, the formula of Theorem \ref{thm:mirror_Fl} in fact describes the entire Langrangian cone.

A brief description of the rest of the paper is as follows.
In Section \ref{sec:GW}, we review genus zero Gromov--Witten theory and Givental formalism, and prepare some lemmas for later use. In Section \ref{sec:flag_bundles} we review flag bundles, including their construction as GIT quotients, and explain their embeddings into trivial flag bundles mentioned above.
In Section \ref{sec:main_theorem}, we prove the main theorem (Theorem \ref{thm:mirror_Fl}) using Lemma \ref{lem:twisted_function}.
This is proved in Section \ref{sec:proof_of_Lemma}, and the abelian/nonabelian correspondence (Theorem \ref{thm:intro_ab/nonab}) needed for its proof is given in Section \ref{sec:ab/nonab}.

\phantom{A}

\noindent
\textbf{Acknowledgements.}
The second named author is grateful to Hiroshi Iritani for their joint research \cite{IK:quantum}, which led to many of the ideas in this paper. Both authors thank Hiroshi Iritani, Jeongseok Oh, and Hsian-Hua Tseng for helpful comments.

\section{Genus zero Gromov--Witten theory}\label{sec:GW}
We review genus zero Gromov--Witten theory, and describe the Givental cone and its variants.

\subsection{Gromov--Witten invariant}
In this subsection, we explain Gromov--Witten invariants and its variants.

\subsubsection{Gromov--Witten invariant}
Let $X$ be a smooth projective variety.
We denote by $\NE(X)$ the monoid in $H_2(X;\Z)$ consisting of the effective curve classes.

For any non-negative integer $n$ and any effective curve class $d\in\NE(X)$, let $\ovcM_{0,n}(X,d)$ denote the moduli stack of degree $d$ stable maps from genus zero prestable curves to $X$ with $n$ markings.
For any classes $\alpha_1,\dots,\alpha_n\in H^*(X;\Q)$ and any integers $k_1,\dots,k_n\in\Z_{\geq0}$, we define a (\emph{genus zero}) \emph{descendant Gromov--Witten invariant} by
\[
\corr{\alpha_1\psi^{k_1},\dots,\alpha_n\psi^{k_n}}^X_{0,n,d} := \int_{[\ovcM_{0,n}(X,d)]^{\vir}}\prod^n_{i=1}\ev^*_i(\alpha_i)\psi_i^{k_i}\in\Q
\]
where $[\ovcM_{0,n}(X,d)]^\vir\in H_*(\ovcM_{0,n}(X,d);\Q)$ is the virtual fundamental class \cite{Behrend}, and $\psi_i\in H^2(\ovcM_{0,n}(X,d))$ ($1\leq i\leq n$) denotes the first Chern class of the $i$-th universal cotangent line bundle over $\ovcM_{0,n}(X,d)$.

\subsubsection{torus-equivariant Gromov--Witten invariant}

Let $\T$ be an algebraic torus, and let $X$ be a smooth quasi-projective $\T$-variety with projective fixed loci $X^\T(\neq\emptyset)$.
Note that the $\T$-action on $X$ induces a $\T$-action on the moduli spaces $\ovcM_{0,n}(X,d)$.

We define a (\emph{genus zero}) \emph{$\T$-equivariant descendant Gromov--Witten invariant} via the virtual localization formula \cite{GP}:
\[
\corr{\alpha_1\psi^{k_1},\dots,\alpha_n\psi^{k_n}}^{X,\T}_{0,n,d} := \int_{[\ovcM_{0,n}(X,d)]^{\vir}}^\T\prod^n_{i=1}\ev^*_i(\alpha_i)\psi_i^{k_i}\in H^*_\T(\pt;\Q)_\loc\cong\Q({\mu})
\]
where $\alpha_i\in H^*_\T(X)$, $k_i\geq0$ and $\mu=(\mu_1,\dots ,\mu_{\operatorname{rk}\T})$ denotes the equivariant parameters for $\T$.
If $X$ is projective, this quantity can be defined without localization and lies in $H^*_\T(\pt;\Q)$.

\subsubsection{twisted Gromov--Witten invariant}
Let $X$ be a smooth projective variety.
Fix \emph{twist data $(W,\bc)$ for $X$}, which is a pair of a $\T$-vector bundle $W$ over $X$ and a characteristic class $\bc\colon K(X)\to \C(\mu)$.
In this paper, we only deal with the case that $\bc$ is the $\T$-equivariant Euler characteristic class $e_\T=\prod\rho(\mu)$ or its inverse (here $\rho(\mu)$ are the $\T$-equivariant Chern roots of $W$).
For any $n\geq0$ and $d\in\NE(X)$, set $W_{0,n,d}:=\R\pi_*f^*W\in K_\T(\ovcM_{0,n}(X,d))$ where $\pi\colon \ovcM_{0,n+1}(X,d)\to\ovcM_{0,n}(X,d)$ denotes the universal family and $f\colon \ovcM_{0,n+1}(X,d)\to X$ denotes the universal stable map.
For $\alpha_i\in H^*_\T(X)$ and $k_i\geq0$ $(1\leq i\leq n)$, we define a (\emph{genus zero}) \emph{$(W,\bc)$-twisted descendant Gromov--Witten invariant} as 
\[
\corr{\alpha_1\psi^{k_1},\dots,\alpha_n\psi^{k_n}}^{X,(W,\bc)}_{0,n,d} := \int_{[\ovcM_{0,n}(X,d)]^{\vir}}\left[\prod^n_{i=1}\ev^*_i(\alpha_i)\psi_i^{k_i}\right]\cup\bc(W_{0,n,d})\in\Q(\mu).
\]

\begin{remark}\label{rmk:GW_bundle}
    If $V\to B$ is a $\T$-vector bundle whose dual is globally generated and we assume in addition that the $\T$-fixed locus in (the total space of) $V$ is precisely the zero section, then $(V,e_\T^{-1})$-twisted Gromov--Witten invariants coincide with $\T$-equivariant Gromov--Witten invariants of $V$.
    More precisely, for any $\alpha_i\in H^*_\T(V)\cong H^*(B)[\mu]$ and $k_i\geq0$,
    \[
    \corr{\alpha_1\psi^{k_1},\dots,\alpha_n\psi^{k_n}}^{V,\T}_{0,n,d} = \corr{\alpha_1\psi^{k_1},\dots,\alpha_n\psi^{k_n}}^{B,(V,e_\T^{-1})}_{0,n,d}.
    \]
\end{remark}

\subsection{Quantum cohomology}
Let $X$ be a smooth projective variety.
We fix a homogeneous $\C$-basis $\{\phi_i\}_{i\in I}$ of $H^*(X,\C)$, and write $\{\tau^i\}_{i\in I}$ the associated dual coordinate system on $H^*(X,\C)$.
Let $Q=Q_X$ denote a formal variable for the monoid $\NE(X)$ (called the \emph{Novikov variable}), and let $\C[\![Q]\!]:=\C[\![\NE(X)]\!]$ (called the \emph{Novikov ring}) be the completion of the monoid ring $\C[\NE(X)]$.
For any (graded) module $R$, we set $R[\![Q,\tau]\!]:=R[\![Q]\!][\![\{\tau^i\}_{i\in I}]\!]$, which is regarded as a graded module with the following grading: 
\[
\deg\tau^i=2-\deg\phi_i \quad (i\in I), \qquad \deg Q^d=c_1(X)\cdot d \quad (d\in\NE(X)).
\]

We write $\corr{\cdot,\cdot}^X$ for the Poincar\'{e} pairing on $H^*(X,\C[\![Q,\tau]\!])$.
For any $\alpha,\beta\in H^*(X)[\![Q,\tau]\!]$, define the \emph{quantum product} of $\alpha$ and $\beta$ to be the element $\alpha\star_{\tau}\beta\in H^*(X)[\![Q,\tau]\!]$ satisfying that, for any $\gamma\in H^*(X)$,
\[
\corr{\alpha\star_{\tau}\beta,\gamma}^X = \sum_{n\geq0} \sum_{d\in\NE(X)} \frac{Q^d}{n!} \corr{\alpha,\beta,\gamma,\tau,\dots,\tau}^X_{0,n+3,d} \in \C[\![Q,\tau]\!]
\]
where $\tau=\sum_{i\in I}\tau^i\phi_i$.
Note that $\tau$ is homogeneous of degree two.
Since the Poincar\'{e} pairing is nondegenerate, $\alpha\star_{\tau}\beta$ is uniquely determined.
We define the \emph{quantum cohomology} of $X$ to be the graded ring $\QH^*(X):=(H^*(X)[\![Q,\tau]\!],\star_{\tau})$.

We define the \emph{fundamental solution} \cite{Givental:equivariant,Pandharipande:rational,CCIT:hodge} $M_X(\tau)\in \End(H^*(X))[z^{-1}][\![Q,\tau]\!]$ as follows:
\begin{equation}\label{eqn:fundamental_solution}
\corr{M_X(\tau)\alpha,\beta}^X = \corr{\alpha,\beta}^X + \sum_{ \substack{n\geq0,d\in\NE(X) \\ (n,d)\neq(0,0)} } \frac{Q^d}{n!} \corr{\alpha,\bt(\psi),\dots,\bt(\psi),\frac{\beta}{z-\psi}}^X_{0,n+2,d}
\end{equation}
where $1/(z-\psi)$ is expanded as $\sum_{k\geq0}z^{-k-1}\psi^k$.
The matrix $M_X(\tau)$ is a solution of the \emph{quantum connection} $\nabla^X_{z\partial_{\tau^i}}=z\partial_{\tau^i}+(\phi_i\star_\tau\cdot)$.
In particular, $M_X(\tau)$ is the matrix that satisfies 
\begin{equation}\label{fundamental solution}
M_X(\tau)\circ\nabla^X_{z\partial_{\tau^i}}=z\partial_{\tau^i}\circ M_X(\tau)    
\end{equation} 
for each $i\in I$, with the initial condition $M_X(\tau)|_{Q=0}=e^{\tau/z}$.

Define the \emph{$J$-function} of $X$ as follows.
\begin{equation}\label{eqn:J-function}
    J_X(\tau,z):= M_X(\tau)1 = 1 + \frac{\tau}{z} + \sum_{ \substack{n\geq0,d\in\NE(X) \\ (n,d)\neq(0,0),(1,0)} } \sum_{i\in I} \frac{Q^d\phi_i}{n!} \corr{\tau,\dots,\tau,\frac{\phi^i}{z(z-\psi)}}^X_{0,n+1,d}
\end{equation}
where $\{\phi^i\}_{i\in I}$ is the dual basis of $\{\phi_i\}_{i\in I}$ with respect to the Poincar\'{e} pairing.

\subsection{Givental cone}\label{subsec:Givental_cone}
The \emph{Givental} (\emph{Lagrangian}) \emph{cone} plays a significant role in Givental formalism \cite{Givental:quantization,Givental:symplectic}, which encodes genus zero Gromov--Witten theory into the symplectic geometry of the \emph{Givental space}, an infinite-dimensional symplectic vector space.

In this section, we will introduce three kinds of Lagrangian cones: a Lagrangian cone of smooth projective varieties (Section \ref{subsubsec:Givental_cone}), a $\T$-equivariant Givental cone of smooth quasi-projective varieties (Section \ref{subsubsec:equivariant_Givental_cone_qproj}), and a twisted Givental cone (Section \ref{subsubsec:twisted_Givental_cone}).

Throughout this section, we assume that $X$ is a smooth quasi-projective variety.
Fix a homogeneous basis $\{\phi_i\}_{i\in I}$ of $H^*(X,\C)$, and let $\{\tau^i\}_{i\in I}$ denote the associated coordinate and $\{\phi^i\}_{i\in I}$ denote its dual basis with respect to the Poincar\'{e} pairing.
Finally, we set $Q:=Q_X$. 

\subsubsection{Givental cone for smooth projective varieties}\label{subsubsec:Givental_cone}

The \emph{Givental space} $(\cH_X,\Omega_X)$ of $X$ is the infinite-dimensional free $\C[\![Q]\!]$-module $\cH_X = H^*(X)[z,z^{-1}][\![Q]\!]$ equipped with the symplectic form $\Omega_X(f(z),g(z)) = -\Res_{z=\infty}\corr{f(-z),g(z)}^Xdz$.
$\cH_X$ splits canonically into two maximally $\Omega_X$-isotropic subspaces $\cH_+$ and $\cH_{-}$:
\[
\cH_+ = H^*(X)[z][\![Q]\!], \quad \cH_- = z^{-1}H^*(X)[z^{-1}][\![Q]\!].
\]
Since $\cH_-$ is identified with the dual of $\cH_+$ via $\Omega_X$, we have a canonical identification $\cH_X\cong T^*\cH_+$. 

The \emph{Givental Lagrangian cone} $\cL_X \subset \cH_X$ \cite{Givental:symplectic} is defined as the graph of the differential of the genus zero descendant Gromov--Witten potential (via $\cH_X\cong T^*\cH_+$).
We give an explicit description of the points on $\cL_X$.
Let $x=(x_1,x_2,\dots)$ be a set of (possibly infinitely many) formal parameters.
Then, a $\C[\![Q,x]\!]$-valued point on $\cL_X$ is of the form
\begin{equation}\label{eqn:point_on_cone}
\bone z + \bt(z) + \sum_{ \substack{n\geq0,d\in\NE(X) \\ (n,d)\neq(0,0),(1,0)} } \sum_{i\in I} \frac{Q^d\phi_i}{n!} \corr{\bt(-\psi),\dots,\bt(-\psi),\frac{\phi^i}{z-\psi}}^X_{0,n+1,d}
\end{equation}
for some $\bt(z)\in H^*(X)[z][\![Q,x]\!]$ satisfying $\bt(z)|_{Q=x=0}=0$.
Here, we follow the sign convention in \cite{IK:quantum,Iritani:blowups}, and $1/(z-\psi)$ is expanded as $\sum_{k\geq0}z^{-k-1}\psi^k$.
Since $\psi$ is nilpotent, this gives a well-defined element of $\cH_X(\C[\![Q,x]\!])=H^*(X)[z,z^{-1}][\![Q,x]\!]$.

\begin{remark}\label{rmk:R-valued_point}
    More generally, we can define a $R$-valued point on $\cL_X$ for arbitrary complete local $\C[\![Q]\!]$-algebra $(R,\mathfrak{m})$ by the same formula as \eqref{eqn:point_on_cone} with $\bt(z)\in H^*(X)[z][\![Q]\!]\widehat{\otimes}_{\C[\![Q]\!]}R$, where $\widehat{\otimes}$ denotes the completed tensor product with respect to $\mathfrak{m}$, such that $\bt(z)=0$ modulo $\mathfrak{m}$; see \cite[Appendix B]{CCIT:computing}.
\end{remark}

Axioms of Gromov--Witten invariants imply that $\cL_X\subset\cH_X$ is \emph{overruled} \cite{Givental:symplectic}.
In particular, any $\C[\![Q,x]\!]$-valued point on $\cL_X$ can be written as $zM_X(\tau(x))f$ for some $\tau(x)\in H^*(X)[\![Q,x]\!]$ and $f\in H^*(X)[z][\![Q,x]\!]$ satisfying that $\tau(x)|_{Q=x=0}=0$ and $f|_{Q=x=0}=1$.

\subsubsection{$\T$-equivariant Givental cone of smooth quasi-projective varieties}\label{subsubsec:equivariant_Givental_cone_qproj}
Let $\T$ be an algebraic torus, and let $X$ be a smooth quasi-projective $\T$-variety whose $\T$-fixed set is projective.
The \emph{$\T$-equivariant Givental Lagrangian cone} $\cL_X$ is defined by replacing spaces/formulas used to define $\cL_X$ with their $\T$-equivariant counterparts.
We explain them in a list.
\begin{itemize}
    \item
    $\cH_{X,\T} := H^*_\T(X)_\loc(\!(z^{-1})\!)[\![Q]\!]$ where $H^*_\T(X)_\loc:=H^*_\T(X)\otimes_{H^*_\T(\pt)}\Frac(H^*_\T(\pt))$. (In this paper, we use $A((y))$ to denote the ring of formal Laurent series in the variable $y$, with coefficients in the $\C$-algebra $A$.) 
    \item 
    $\Omega_X(f(z),g(z)) = -\Res_{z=\infty}\corr{f(-z),g(z)}^{X,\T}dz$ for $f,g\in\cH_{X,\T}$, where $\corr{\cdot,\cdot}^{X,\T}$ denotes the $H^*_\T(\pt)_\loc$-bilinear pairing on $H^*_\T(X)_\loc$ defined by the localization formula.
    \item 
    $\cH_+:=H^*_\T(X)_\loc[z][\![Q]\!]$ and $\cH_-:=z^{-1}H^*_\T(X)_\loc[\![z^{-1}]\!][\![Q]\!]$.
\end{itemize}
For any formal variables $x=(x_1,x_2,\dots)$, a $H^*_\T(\pt)_\loc[\![Q,x]\!]$-valued point on $\cL_{X,\T}$ is of the form
\[
\bone z + \bt(z) + \sum_{ \substack{n\geq0,d\in\NE(X) \\ (n,d)\neq(0,0),(1,0)} } \sum_{i\in I} \frac{Q^d\phi_i}{n!} \corr{\bt(-\psi),\dots,\bt(-\psi),\frac{\phi^i}{z-\psi}}^{X,\T}_{0,n+1,d}
\]
for some $\bt(z)\in H^*_\T(X)_\loc[z][\![Q,x]\!]$ such that $\bt(z)|_{Q=x=0}=0$.
As in the non-equivariant theory, for any $H^*_\T(\pt)_\loc[\![Q,x]\!]$-valued point $z\cdot I$ of $\cL_{X,\T}$, there exist 
\[
\tau(x)\in H^*_\T(X)_\loc[\![Q,x]\!],\quad  f\in H^*_\T(X)_\loc[z][\![Q,x]\!]
\]
satisfying $\tau(x)|_{Q=x=0}=0$ and $f|_{Q=x=0}=1$ such that $I=M_{X,\T}(\tau(x))f$.
Here, $M_{X,\T}(\tau)$ represents the $\T$-equivariant fundamental solution of $X$, and is defined by the same formula \eqref{eqn:fundamental_solution}, with Gromov--Witten invariants in \eqref{eqn:fundamental_solution} replaced by $\T$-equivariant Gromov--Witten invariants.

\subsubsection{twisted Givental cone}\label{subsubsec:twisted_Givental_cone}
Let $\T$ be an algebraic torus, and let $E\to X$ be a vector bundle over a smooth projective variety $X$ with a fiberwise $\T$-action.
We assume that $E^\T=X$ (embedded as the zero section of $E$) and $\bc$ is either the $\T$-equivariant Euler class or its inverse.
Again, we define the cone for $(E,\bc)$-twisted Gromov--Witten theory of $X$ by the same procedure.

Namely, the Givental space for the twisted theory is
\[
\cH_{X,(E,\bc)} = H^*_\T(X)_\loc(\!(z)\!)[\![Q]\!], \quad \cH_+ = H^*_\T(X)_\loc[\![z,Q]\!].
\]
For any formal variables $x=(x_1,x_2,\dots)$, a $H^*_\T(\pt)_\loc[\![Q,x]\!]$-valued point on $\cL_{X,(E,\bc)}$ is of the form
\[
\bone z + \bt(z) + \sum_{ \substack{n\geq0,d\in\NE(X) \\ (n,d)\neq(0,0),(1,0)} } \sum_{i\in I} \frac{Q^d\phi_i}{n!} \corr{\bt(-\psi),\dots,\bt(-\psi),\frac{\bc(E)^{-1}\phi^i}{z-\psi}}^{X,(E,\bc)}_{0,n+1,d}.
\]
with $\bt(z)\in H^*_\T(X)_\loc[\![z,Q,x]\!]$ such that $\bt(z)|_{Q=x=0}=0$.

A point on the Givental cone of a subvariety can sometimes be obtained from a known point on the Euler-twisted Givental cone of the ambient variety. 
More precisely, the following standard lemma (a consequence of the functoriality of virtual classes in \cite{KKP:functoriality})  holds.

\begin{lemma}\label{lem:I-function_of_subvarieties}
    Let $X$ be a smooth projective variety, and let $E\to X$ be a convex vector bundle, i.e. $H^1(C,f^*E)=0$ for any genus zero stable map $f\colon C\to X$. Let $Z\stackrel{\iota}{\subset} X$ be the zero-scheme of a regular section of $E$. Consider the $\C^\times$-action on $E$ by scaling, with equivariant parameter $\mu$, and write $e_\mu$ for the equivariant Euler class $e_{\C^\times}$.
    Let $f$ be a $\C(\mu)[\![Q_X,x]\!]$-valued point of $\cL_{X,(E,e_\mu)}$.
    If the non-equivariant limit of the pullback $\iota^*f$ exists, then the limit $\lim_{\mu\to0}\iota^*f$ is a $\C[\![Q_X,x]\!]$-valued point of $\cL_{Z}$.
    Here, $\C[\![Q_X,x]\!]$ is regarded as a $\C[\![Q_Z]\!]$-algebra via the map $Q_Z^d\mapsto Q_X^{\iota_*d}$.
\end{lemma}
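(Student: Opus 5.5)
We want to show that the non-equivariant limit of $\iota^*f$ lies on $\cL_Z$. The plan is to write $f$ explicitly in the twisted-cone form, pull it back term by term along $\iota$, and identify each term with the corresponding Gromov--Witten term for $Z$.

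\textbf{Writing $f$ as a twisted-cone point.} Since $f$ lies on $\cL_{X,(E,e_\mu)}$, we can write
\[
f=\bone z+\bt(z)+\sum_{n,d}\sum_{i}\frac{Q_X^d\phi_i}{n!}\corr{\bt(-\psi),\dots,\bt(-\psi),\frac{e_\mu(E)^{-1}\phi^i}{z-\psi}}^{X,(E,e_\mu)}_{0,n+1,d}.
\]
This uses the general $\bt$ of the twisted cone, which is a power series in $z$. We will use the standard reformulation of this form: for any $\gamma\in H^*(X)$,
\[
\corr{f,\gamma}^X=\corr{z+\bt(z),\gamma}^X+\sum\frac{Q^d}{n!}\corr{\bt(-\psi),\dots,\bt(-\psi),\frac{\gamma\, e_\mu(E)^{-1}}{z-\psi}}^{X,(E,e_\mu)}_{0,n+1,d}.
\]
In this form the twisted invariants carry the class $e_\mu(E_{0,n+1,d})$. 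Convexity gives $E_{0,n+1,d}=\pi_*f^*E$ as an honest vector bundle. Moreover $E_{0,n+1,d}=\ev_{n+1}^*E\oplus E'_{0,n+1,d}$, where $E'$ is the kernel of evaluation at the last marking. The factor $e_\mu(E)^{-1}$ attached to the last insertion cancels $e_\mu(\ev^*_{n+1}E)$, so the correlator becomes an integral against $e_\mu(E')$.

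\textbf{Pushing forward along $\iota$.} Next we apply functoriality \cite{KKP:functoriality}. Since $Z$ is the zero locus of a regular section $s$, the induced section $\pi_*f^*s$ of $E_{0,n+1,d}$ cuts out $\coprod_{\iota_*d'=d}\ovcM_{0,n+1}(Z,d')$, and
\[
\sum_{\iota_*d'=d}j_*[\ovcM_{0,n+1}(Z,d')]^\vir=e(E_{0,n+1,d})\cap[\ovcM_{0,n+1}(X,d)]^\vir,
\]
with $j$ the inclusion of moduli. Here $e$ is the non-equivariant Euler class, which is the $\mu\to0$ limit of $e_\mu$. We want a version that pairs against classes $\iota_*\gamma'$ coming from $Z$. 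This is the main technical point: we must handle the term $e_\mu(\ev^*E)^{-1}$ and get a statement for the pullback $\iota^*f$ rather than for $\iota_*$.

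\textbf{Testing against classes from $Z$.} For $\gamma'\in H^*(Z)$, we compute $\corr{\iota^*f,\gamma'}^Z=\corr{f,\iota_*\gamma'}^X$. Since $\iota_*\gamma'\cdot e_\mu(E)^{-1}$ has a well-defined limit and $\iota_*1=e(E)$, it is cleaner to insert $\gamma'$ via $\ev_{n+1}^*$ and use the splitting $E_{0,n+1,d}=\ev^*E\oplus E'$ above. On the component for $Z$ we have $\ev_{n+1}^*\iota_*\gamma'\cdot e_\mu(\ev^*E)^{-1}\to\ev^*\gamma'$ after restricting. Taking $\mu\to0$ and applying the functoriality formula to the remaining factor $e(E')$ gives
\[
\lim_{\mu\to 0}\corr{\iota^*f,\gamma'}^Z=\corr{z+\iota^*\bt,\gamma'}^Z+\sum\frac{Q_X^{\iota_*d'}}{n!}\corr{\iota^*\bt(-\psi),\dots,\frac{\gamma'}{z-\psi}}^Z_{0,n+1,d'}.
\]
This uses that $\psi$-classes and evaluation maps are compatible with $j$. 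The insertions $\bt$ may involve $\mu^{-1}$; the hypothesis that the limit exists lets us take $\lim\iota^*\bt$. To justify this, we argue order by order in $Q$ and $x$, using the equivariant analogue of the displayed identity together with the fact that it is polynomial in $\mu$ except through $\bt$. Nondegeneracy of the pairing on $Z$ then identifies $\lim\iota^*f$ with the point of $\cL_Z$ having $\bt_Z=\lim\iota^*\bt$.

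\textbf{Remaining issue.} The expected obstacle is that $\bt$ lives in $H^*_\T(X)_\loc[\![z]\!]$, not $H^*[z]$. A point of $\cL_Z$ requires a polynomial $\bt$ whose limit exists. We handle this using the overruled structure: each $Q,x$-coefficient of $\bt_Z$ is a polynomial in $z$, which is enough by Remark~\ref{rmk:R-valued_point}. We would also cite \cite{CCIT:computing} for the precise limit argument.
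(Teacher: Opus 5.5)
\textbf{Gap: poles of $\bt$ in $\mu$.} Your overall route is the standard argument the paper has in mind; the paper gives no proof beyond citing \cite{KKP:functoriality}. You pair $\iota^*f$ with $\gamma'\in H^*(Z)$, rewrite this as $\corr{f,\iota_*\gamma'}^X$, cancel $e_\mu(\ev_{n+1}^*E)$ against the $e_\mu(E)^{-1}$ at the last insertion so only $e_\mu(E')$ survives, then set $\mu=0$ and localize.

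The genuine gap is the step where $\bt$ has poles in $\mu$. It cannot be closed, because the statement with only $\iota^*f$ assumed to have a limit is false. Expand $e_\mu(E')=\sum_j\mu^{\rank E'-j}c_j(E')$. After capping with $\ev_{n+1}^*(\iota_*\gamma')$, everything is supported on $\ev_{n+1}^{-1}(Z)$. However, only the top Chern class of $E'$ localizes further onto $\coprod_{d'}\ovcM_{0,n+1}(Z,d')$, where the first $n$ insertions become $\iota^*\bt$. A polar part of $\bt$ with vanishing $\iota^*$ is invisible in $\iota^*\bt$. It still combines with $\mu^{\rank E'-j}c_j(E')$, $j<\rank E'$, to give finite terms at $\mu=0$ that are not governed by $Z$. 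There is no equivariant KKP identity to control them.

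\textbf{Counterexample.} Take $X=\PP^1$, $E=\cO(1)$, $Z$ a point, and $H$ the hyperplane class. The twisted $I$-function $\sum_dQ^de^{Ht/z+dt}\prod_{c=1}^d(\mu+H+cz)/\prod_{c=1}^d(H+cz)^2$ equals $J^{\mathrm{tw}}(tH+Qe^t)$. By the string equation and Kummer's transformation, $\iota^*J^{\mathrm{tw}}(tH)=B(Qe^t)$, where
\[
B(u)=\sum_{k\ge0}\frac{(-\mu/z)_k}{(k!)^2}\Bigl(-\frac{u}{z}\Bigr)^k,\qquad (a)_k:=a(a+1)\cdots(a+k-1).
\]
Each $u^k$-coefficient with $k\ge1$ is divisible by $\mu$. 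By overruledness, $f:=zM^{\mathrm{tw}}(0)\bigl(1+\tfrac{x}{\mu}H\bigr)$ is a $\C(\mu)[\![Q,x]\!]$-valued point of $\cL_{X,(E,e_\mu)}$. Its pullback $\iota^*f=z\bigl(B(Q)+\tfrac{x}{\mu}\,z\,(u\partial_uB)|_{u=Q}\bigr)$ is polynomial in $\mu$, with limit
\[
z\bigl(1+x(1-e^{-Q/z})\bigr)=z+xQ-\tfrac{xQ^2}{2z}+\cdots.
\]
The point of $\cL_{\pt}$ with $\bt=xQ$ is $ze^{xQ/z}=z+xQ+\tfrac{x^2Q^2}{2z}+\cdots$, so the limit is not on $\cL_{\pt}$.

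The lemma therefore needs the stronger hypothesis that $f$ itself, equivalently $\bt=[f]_+-z$, has a non-equivariant limit. This is what the paper actually uses, since $F(\mu)$ is polynomial in $\mu$, and under it your argument goes through.

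\textbf{The localization step still needs correcting.} The KKP identity you quote concerns $e(E_{0,n+1,d})\cap[\ovcM_{0,n+1}(X,d)]^\vir$, which can only be paired with classes pulled back from $\ovcM_{0,n+1}(X,d)$. Since $\gamma'$ need not lie in the image of $\iota^*$, you need the refined identity on $\ev_{n+1}^{-1}(Z)=\ovcM_{0,n+1}(X,d)\times_XZ$:
\[
\ev_{n+1}^*(\iota_*\gamma')\,e(E')\cap[\ovcM_{0,n+1}(X,d)]^\vir=\sum_{\iota_*d'=d}j_*\bigl(\ev_{n+1}^*\gamma'\cap[\ovcM_{0,n+1}(Z,d')]^\vir\bigr).
\]
It follows by factoring the localized top Chern class of $E_{0,n+1,d}$, for the section induced by $s$, as $\iota^!$ for $\ev_{n+1}^*s$ followed by the localized top Chern class of $E'$ for the induced section, together with a projection formula.

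Convexity, applied to multiple covers, makes evaluation surjective, so $E'$ is a bundle. You only have an exact sequence, not a splitting, but that suffices for Euler classes.

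Two of your sentences are wrong as written. The phrase ``$\ev_{n+1}^*\iota_*\gamma'\cdot e_\mu(\ev^*E)^{-1}\to\ev^*\gamma'$ after restricting'' is not meaningful. The claim that $\iota_*\gamma'\,e_\mu(E)^{-1}$ has a limit is false: on $\PP^1$ with $E=\cO(1)$, $\iota_*1\cdot e_\mu(E)^{-1}=H/\mu$.

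Finally, $z$-polynomiality of $\lim\iota^*\bt$ does not follow from overruledness. It must come from $\bt$ being polynomial in $z$, or be built into the hypothesis.
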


\begin{remark}
    When $E$ is a convex vector bundle, then the $(E,e_\mu)$-twisted invariants are polynomials in $\mu$ as $E_{0,n,d}$ can be realized as an actual vector bundle, and hence we can consider $(E,e_\mu)$-twisted theory without localizing with respect to $\mu$.
\end{remark}

\subsection{$\T$-equivariant Givental cone of vector bundles}\label{subsec:equivariant_Givental_cone_vector_bundles}
Let $\T$ be an algebraic torus, and let $V\to X$ be a vector bundle over a smooth projective variety $X$ with a fiberwise $\T$-action.
We assume that $V^\vee$ is globally generated and $V^\T=X$.
We have already introduced the $\T$-equivariant Givental space of certain quasi-projective $\T$-varieties (Section \ref{subsubsec:equivariant_Givental_cone_qproj}), including the bundle $V$.
In this subsection, we introduce a smaller variant for $V$, which will be denoted by $\cH_{V,\T}^\pol$.
We will use $\cH_{V,\T}^\pol$ (and its subset $\cL_{V,\T}$) to formulate our main theorem.

We follow almost the same procedure as in the previous subsection, with some modifications.
We identify $\NE(V)$ and $\NE(X)$ and write $Q$ for the Novikov variable of $V$.
Define the Givental space $\cH_{V,\T}^\pol$ and its subspace $\cH_+$ as follows:
\[
\cH_{V,\T}^\pol=H^*_\T(V)[z,z^{-1}][\![Q]\!],\quad\cH_+^\pol=H^*_\T(V)[z][\![Q]\!].
\]
We define the Givental cone $\cL_{V,\T}\subset\cH_{V,\T}^\pol$ as follows: for any formal variables $x=(x_1,x_2,\dots)$, the set $\cL_{V,\T}(H^*_\T(\pt)[\![Q,x]\!])$ consists of the points of the form
\begin{equation}\label{eqn:point_on_cone_vector_bundle}
\bone z + \bt(z) + \sum_{ \substack{n\geq0,d\in\NE(X) \\ (n,d)\neq(0,0),(1,0)} } \sum_{i\in I} \frac{Q^d\phi_i}{n!} \corr{\bt(-\psi),\dots,\bt(-\psi),\frac{e_\lambda(V)\phi^i}{z-\psi}}^{V,\T}_{0,n+1,d}
\end{equation}
with $\bt(z)\in H^*_\T(V)[z][\![Q,x]\!]$ such that $\bt(z)|_{Q=x=0}=0$.
The set $\{e_\lambda(V)\phi^i\}_{i\in I}$ is the dual basis of $\{\phi_i\}_{i\in I}$ with respect to the Poincar\'{e} pairing $\corr{\phi_1,\phi_2}^V = \int_X^\T\phi_1\cup\phi_2\cup e_\lambda(V)^{-1}$.

The function \eqref{eqn:point_on_cone_vector_bundle} is indeed a point of $\cH_{V,\T}^\pol$.
In fact, the class $\psi_1$ for $\ovcM_{0,n}(V,d)$ is nilpotent since it matches the pullback of $\psi_1$ for $\ovcM_{0,n}(B,d)$ as images of $\T$-fixed stable maps to $V$ are included in the zero section $B\subset V$.

\subsection{Quantum Riemann-Roch theorem}
We recall the Quantum Riemann-Roch Theorem of Coates-Givental, \cite{CG:quantum}, which relates a twisted Givental cone with the untwisted one.

Let $X$ be a smooth projective variety, and let $E\to X$ be a vector bundle.
For a formal variable $\lambda$, we define the operator $\Delta_E^\lambda$ acting on $\cH_X$ as
\[
\Delta_E^\lambda f := \left. \left( \exp\left[ \sum_{l,m\geq0} s_{l+m-1}(\lambda) \cdot \frac{B_m}{m!} \cdot \mathrm{ch}_l(E) \cdot (-z)^{m-1} \right] f \right) \right|_{Q_X^d\to Q_X^d\lambda^{-d\cdot c_1(E)}},
\]
where $B_m$ is the Bernoulli number given by the formula $x/(e^x-1)=\sum_{m=0}^\infty(B_m/m!)x^{(m)}$, and $s_k(\lambda)$ ($k\geq-1$) is defined as
\[
s_k(\lambda) = 
\begin{cases}
    0   &\text{for}\quad k=-1,0,  \\
    (k-1)!(-\lambda)^{-k} &\text{for}\quad k\geq1.
\end{cases}
\]
Note that the characteristic class $\lambda^{-\rank(\cdot)}\exp(\sum_{k\geq0}s_k(\lambda)\cdot\mathrm{ch}_k(\cdot))$ matches $e_\lambda(\cdot)^{-1}$ (the {\em inverse} of the $\C^\times$-equivariant Euler class, where $\lambda$ is regarded as the equivariant parameter for the $\C^\times$-action on $E$ by multiplication on the fibers).
The following theorem follows from the quantum Riemann-Roch theorem \cite[Corollary 4]{CG:quantum}. 

\begin{theorem}\label{thm:QRR}
    Let $\T$ be an algebraic torus, and let $E$ be a $\T$-vector bundle over $X$ with a decomposition $E=\bigoplus_{\alpha\in\Hom(\T,\C^\times)}E_\alpha$.
    Then, we have $\cL_{X,(E,e_\T^{-1})}=(\prod_{\alpha\in\Hom(\T,\C^\times)}\Delta_{E_\alpha}^\alpha)\cL_X$.
\end{theorem}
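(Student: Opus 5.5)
The plan is to reduce the statement to the one-dimensional case and then appeal directly to the Coates--Givental quantum Riemann--Roch theorem \cite[Corollary 4]{CG:quantum}. First I handle the twist data. Since $\T$ acts fiberwise on $E$ and $E=\bigoplus_\alpha E_\alpha$ is the weight decomposition, the $\T$-equivariant Euler class is multiplicative:
\[
e_\T(E_{0,n,d})^{-1}=\prod_\alpha e_\T\bigl((E_\alpha)_{0,n,d}\bigr)^{-1},
\]
where $(E_\alpha)_{0,n,d}=\R\pi_*f^*E_\alpha$. On $E_\alpha$ the torus acts through the character $\alpha$, i.e.\ by scalar multiplication with equivariant parameter $\alpha(\mu)$ (a linear form in $\mu$). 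Hence $e_\T^{-1}$ restricted to $E_\alpha$ equals the $\C^\times$-equivariant inverse Euler class $e_\lambda^{-1}$ with $\lambda$ specialized to $\alpha$. So the $(E,e_\T^{-1})$-twisted theory is the theory twisted simultaneously by the multiplicative classes $\bc_\alpha=e_{\lambda=\alpha}^{-1}$ on the summands $E_\alpha$.

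Next I write each such class in Coates--Givental form. As recalled just before the theorem,
\[
e_\lambda(\cdot)^{-1}=\lambda^{-\rank(\cdot)}\exp\Bigl(\sum_k s_k(\lambda)\mathrm{ch}_k(\cdot)\Bigr).
\]
The factor $\lambda^{-\rank}$ needs care. Applied to $\R\pi_*f^*E_\alpha$, Riemann--Roch gives a virtual rank of $\rank E_\alpha+d\cdot c_1(E_\alpha)$. This factor produces the Novikov substitution $Q^d\mapsto Q^d\lambda^{-d\cdot c_1(E_\alpha)}$ together with an overall constant $\lambda^{-\rank E_\alpha}$. That constant is absorbed by the change of Poincar\'e pairing, which is exactly why the twisted cone uses $\bc(E)^{-1}\phi^i$ in the insertion. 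Corollary 4 of \cite{CG:quantum} then states that the cone twisted by $\exp(\sum s_k\mathrm{ch}_k)$ is the image of $\cL_X$ under
\[
\exp\Bigl[\sum_{l,m} s_{l+m-1}\tfrac{B_m}{m!}\mathrm{ch}_l(E_\alpha)(-z)^{m-1}\Bigr].
\]
Combining the exponential factor with the Novikov substitution gives precisely $\Delta^\alpha_{E_\alpha}$.

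Finally I treat several summands at once. I use the version of quantum Riemann--Roch for a general multiplicative class $\exp(\sum_k s_k\mathrm{ch}_k)$ with $s_k$ allowed to vary per summand. Equivalently, I view $\bigoplus_\alpha E_\alpha$ as twisted by the product class, with parameters $s^{(\alpha)}_k$ attached to each summand. The operators for the different summands are multiplications by commuting elements of $H^*(X)(\!(z)\!)$ combined with commuting rescalings of $Q$, so they commute and compose to give $\prod_\alpha\Delta^\alpha_{E_\alpha}$.

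The main obstacle is not conceptual but one of bookkeeping and convergence. The operators involve $s_k(\alpha)=(k-1)!(-\alpha)^{-k}$, which are Laurent in $\mu$, and infinite series in $z^{-1}$ and $z$. So I must check that their action on $\cH_X$ lands in the twisted space $H^*_\T(X)_\loc(\!(z)\!)[\![Q]\!]$ described in Section~\ref{subsubsec:twisted_Givental_cone}. The approach is to note that $\mathrm{ch}_l$ is nilpotent for $l>0$ and that the $l=0$ terms are scalar. Coates--Givental work over formal parameters $s_k$, so the specialization to $s_k(\alpha)$ is legitimate: in each $Q$-degree and cohomological degree only finitely many $s_k$ contribute, and each specialization lands in $\Frac H^*_\T(\pt)$. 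I would also record the sign convention for $z$ versus $-z$, adapted to the $\bone z+\dots$ normalization of \eqref{eqn:point_on_cone}, which accounts for the $(-z)^{m-1}$ in $\Delta$.
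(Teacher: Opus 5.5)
Your proposal is correct and follows the paper: the paper simply cites \cite[Corollary 4]{CG:quantum}, and your weight splitting, the conversion of the $\lambda^{-\rank}$ factor into the Novikov rescaling, and the commutation of the factors $\Delta^\alpha_{E_\alpha}$ are the bookkeeping implicit in that citation. One small correction to that bookkeeping: the change $z\mapsto -z$ only explains the powers $(-z)^{m-1}$, whereas the $z$-independent $m=1$ term $\exp\bigl(-\tfrac12\sum_l s_l(\alpha)\mathrm{ch}_l(E_\alpha)\bigr)$ has a different origin. It makes the exponential part of $\Delta^\alpha_{E_\alpha}$ at $z$ times its value at $-z$ equal $\exp\bigl(-\sum_k s_k(\alpha)\mathrm{ch}_k(E_\alpha)\bigr)$, i.e.\ compatible with the twisted pairing; Coates--Givental absorb this factor into their $\sqrt{\bc}$-identification of the twisted and untwisted Givental spaces, which is why only even Bernoulli numbers appear in their formula.
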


\subsection{Differential operators acting on Givental cones}
In this subsection, we introduce known results on differential operators acting on Givental cones.
It is important to study them because they provide (genuinely) new points on $\cL_X$ (or $\cL_{X,\T}$) from known points on it.

The following lemma claims that a certain differential operator acts on the Givental cone; see e.g. \cite[\S8]{CG:quantum}, \cite[Lemma 2.7]{IK:quantum}.

\begin{lemma}\label{lem:differential_operator_preserves_cone}
    Let $\T$ be an algebraic torus, and let $X$ be a smooth projective $\T$-variety.
    Let $x=(x_1,x_2,\dots)$ and $y=(y_1,y_2,\dots)$ be formal variables.
    Let 
    \[
    D(x,y,z\partial_x,z)\in \C[z][\![x]\!]\langle z\partial_x \rangle [\![Q_X,y]\!]
    \]
    be a differential operator such that $D|_{Q_X=y=0}=0$.
    Then, the differential operator $e^{D/z}$ preserves $\cL_X$ and $\cL_{X,\T}$ respectively.
\end{lemma}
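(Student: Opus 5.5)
The plan is to use the description of $\cL_X$ via the fundamental solution and Givental's characterization of the cone's geometry. By the overruled structure, every point of $\cL_X$ has the form $zM_X(\tau)f$. Equivalently, the tangent space $T$ at any point is a $\C[z][\![Q,x]\!]$-module (in particular $zT\subset T$), and $T\cap\cL_X = zT$. The cone is thus a union of ruling spaces $zT_\tau = zM_X(\tau)\cH_+$. Lemma~\ref{lem:differential_operator_preserves_cone} will follow from a flow argument. Introduce an auxiliary parameter $s$ and set $F_s = e^{sD/z}F$ for a point $F(x,y)\in\cL_X$ (a family over the parameters $x,y$). We must show that $F_s$ stays on the cone for all $s$, working $(Q_X,y)$-adically.

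The key steps are as follows. First, note the tangent-space property: if $F(x)$ is a family of points on $\cL_X$, then $z\partial_{x_j}F\in zT_F$. Iterating, any $\C[z][\![x]\!]$-linear combination of words in the $z\partial_{x_j}$, applied to $F$, lies in $zT_F$, because $T_F$ is a $\C[z][\![x]\!]$-module and the derivative of a vector in $zT_F$ along the family stays in $zT_F$ (the rulings are preserved under differentiation in the parameters). Hence $D F\in zT_F$, i.e. $\frac{1}{z}DF\in T_F$. Second, the derivative $\frac{d}{ds}F_s = \frac{1}{z}D F_s$ is therefore a vector field tangent to $\cL_X$ along any family lying on the cone. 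Solving the ODE $\frac{d}{ds}G_s=\frac1z D G_s$ within the cone then produces a family on $\cL_X$. By uniqueness of formal solutions, this family coincides with $e^{sD/z}F$. Concretely, I would build the solution order by order in the $(Q_X,y)$-adic filtration. Since $D|_{Q_X=y=0}=0$, each order is determined by lower ones, which makes uniqueness and convergence automatic. Setting $s=1$ gives the claim.

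The main obstacle is the precise justification that applying $z\partial_{x_j}$ (and compositions with coefficients in $\C[z][\![x]\!]$) to a point keeps one in $zT_F$. In particular, the noncommutativity of $z\partial_x$ with the coefficient functions must be handled. I would do this via $F=zM_X(\tau(x))f(x)$: by the differential equation \eqref{fundamental solution}, $z\partial_{x_j}(M_X(\tau)g)=M_X(\tau)\big((\partial_{x_j}\tau)\star_\tau g+z\partial_{x_j}g\big)$. So $z\partial_{x_j}$ acts on $M_X(\tau)\cH_+[\![x]\!]$ by an operator preserving $\cH_+$, and any such operator $D$ maps $zM_X(\tau)\cH_+$ to itself. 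Then $F_s$ can be sought in the form $zM_X(\tau_s)f_s$. This reduces the problem to an ODE for $(\tau_s,f_s)$ with the Birkhoff-factorization step, splitting into $\cH_+$ and $\cH_-$ parts, done formally in $(Q_X,y)$. The $\T$-equivariant case is identical, using $M_{X,\T}$ and working over $H^*_\T(\pt)_\loc$.
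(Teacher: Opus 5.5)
Your proposal is correct and is essentially the standard argument behind the references the paper cites for this lemma. The paper gives no proof of its own beyond \cite[\S8]{CG:quantum} and \cite[Lemma 2.7]{IK:quantum}. Those references use the same route you take: the quantum differential equation gives $z\partial_{x_j}(M_X(\tau)g)=M_X(\tau)\bigl((\partial_{x_j}\tau)\star_\tau g+z\partial_{x_j}g\bigr)$, so $D$ preserves $zM_X(\tau)\cH_+$ and $z^{-1}D$ is tangent to the cone. The flow $e^{sD/z}$ is then integrated inside the cone in the form $zM_X(\tau_s)f_s$, $(Q_X,y)$-adically, and uniqueness for the linear flow identifies the result with $e^{sD/z}F$.
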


The following lemma asserts that an element of the Givental cone with sufficiently many parameters can be obtained by applying a differential operator to the $J$-function.

\begin{lemma}[{\cite[Lemma 2.8]{IK:quantum}}]\label{lem:moving_points}
    Let $B$ be a smooth projective variety, let $\{\phi_i\}_{i\in I}$ be a basis of $H^*(B,\C)$, and let $\{\tau^i\}_{i\in I}$ be the dual coordinates on $H^*(B,\C)$.
    Let $z\cdot I(\tau,x,z)$ be an $\C[\![Q_B,\tau,x]\!]$-valued point on $\cL_B$ such that
    \[
    z \cdot I(\tau,x,z)|_{Q_B=x=0} = z + \sum_{i\in I}\tau^i\phi_i + O(z^{-1}).
    \]
    Then, there exists a differential operator $D\in\sum_{i\in I}\C[z][\![Q_B,\tau,x]\!]z\partial_{\tau^i}$ such that $e^{D/z}J_B(\tau,z)=I(\tau,x,z)$.
\end{lemma}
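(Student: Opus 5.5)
Since $D$ is to be a first-order operator in the $\tau^i$ with coefficients in $\C[z][\![Q_B,\tau,x]\!]$, I will construct it order by order in the adic topology of $\C[\![Q_B,\tau,x]\!]$ along the ideal $\mathfrak{m}=(Q_B,x)$. The basic tool is the overruled structure of $\cL_B$. For any $\C[\![Q_B,\tau,x]\!]$-valued point we can write
\[
z\cdot I = zM_B(\sigma)f, \qquad \sigma=\sigma(\tau,x)\in H^*(B)[\![Q_B,\tau,x]\!],\quad f\in H^*(B)[z][\![Q_B,\tau,x]\!].
\]
The normalization $zI|_{Q_B=x=0}=z+\tau+O(z^{-1})$ forces $\sigma\equiv\tau$ and $f\equiv 1$ modulo $\mathfrak{m}$. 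This holds because $zM_B(\tau)1=zJ_B(\tau,z)=z+\tau+O(z^{-1})$, and the decomposition is unique.

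First I reduce to the case $\sigma=\tau$. The change of variables $\tau\mapsto\sigma(\tau,x)$ is invertible over $\C[\![Q_B,\tau,x]\!]$, since it is the identity modulo $\mathfrak{m}$. It is realized as $e^{D_0/z}$ with $D_0=\sum_i v^i(\tau,x)\,z\partial_{\tau^i}$, where $v^i\in\mathfrak{m}$ is obtained by taking the logarithm of the corresponding flow. Indeed, $\exp(\sum_i v^i\partial_{\tau^i})$ acts on functions of $\tau$ as a substitution $\tau\mapsto\tau+w(\tau,x)$, and any substitution congruent to the identity modulo $\mathfrak{m}$ arises this way.

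Next I handle $f$. From the fundamental-solution identity \eqref{fundamental solution} we get $z\partial_{\tau^i}M_B(\tau)g=M_B(\tau)(\phi_i\star_\tau g+z\partial_{\tau^i}g)$. Hence for a polynomial $P(\tau,x,z\partial_\tau,z)$,
\[
P\,J_B=M_B(\tau)\,\widehat P\cdot 1,
\]
where $\widehat P$ is built by replacing $z\partial_{\tau^i}$ with $\nabla_{z\partial_{\tau^i}}$. Since the $\phi_i$ generate $H^*(B)$ under $\star_\tau$ modulo $\mathfrak{m}$ (at $Q_B=0$ the product $\star_\tau$ deforms the cup product, and $\{\phi_i\}$ is a basis), any $f\equiv 1$ can be written as $\widehat P\cdot 1$. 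Here $P=1+\sum_i c_i(\tau,x,z)\,z\partial_{\tau^i}+(\text{higher-order terms})$ with coefficients in $\mathfrak{m}$.

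The main obstacle is that $P$ must be of the form $e^{D/z}$ with $D$ first order in $z\partial_\tau$; a general higher-order $P$ does not obviously have this form. I will argue inductively. Suppose $I\equiv e^{D_k/z}J_B$ modulo $\mathfrak{m}^{k}$. The discrepancy $I-e^{D_k/z}J_B$ is then a tangent vector to $\cL_B$ at $e^{D_k/z}J_B$, computed modulo $\mathfrak{m}^{k+1}$. Tangent spaces are $zT$ with $T=M_B\,\cH_+$, and $zT$ is spanned over $\C[z]$ by derivatives $z\partial_{\tau^i}$ of the point. At order $\mathfrak{m}^k$, a correction $D_k\mapsto D_k+\delta$ with $\delta=\sum_i\delta^i(\tau,x,z)z\partial_{\tau^i}$ and $\delta^i\in\mathfrak{m}^k$ changes $e^{D/z}J_B$ by $z^{-1}\delta J_B$ modulo $\mathfrak{m}^{k+1}$. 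The tangent space is $T_{J_B}=\{\sum_i c_i(z)\,\partial_{\tau^i}J_B\}$, and $z\partial_{\tau^i}J_B=M_B\phi_i$ spans $M_B\cH_+$ over $\C[z]$. I therefore need the discrepancy to lie in $T=M_B\cH_+$ rather than in $zT$. This follows from the cone structure: $zT\subset\cL$ and $T\cap\cL=zT$. Working with $I$ instead of $zI$ puts the discrepancy in $T$ (the first-order variation of $zM_Bf$ is $zM_B(\ldots)$, so after dividing by $z$ it lies in $M_B\cH_+$). Solving for $\delta^i$ and passing to the limit in the $\mathfrak{m}$-adic topology gives $D=\lim D_k$ with $e^{D/z}J_B=I$.
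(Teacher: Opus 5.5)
Your final inductive argument is the standard order-by-order proof behind \cite[Lemma 2.8]{IK:quantum}, which the paper only cites. The pieces are: the base case $I\equiv J_B$ modulo $\mathfrak m=(Q_B,x)$, which holds because a point of $\cL_B$ is determined by its $\cH_+$-part; the fact that $ze^{D_k/z}J_B\in\cL_B$, by Lemma~\ref{lem:differential_operator_preserves_cone}; and solving for the corrections $z^{-1}\delta J_B$ order by order. Your detours through $\sigma=\tau$ and $\widehat P$ are therefore unnecessary.

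Your tangent-space bookkeeping is off, however. The first-order variation of $zM_B(\sigma)f$ is $M_B(\delta\sigma\star_\sigma f+z\,\delta f)$, which is not divisible by $z$. What is actually true, by the definition of the tangent space $T_0=e^{\tau/z}H^*(B)[z][\![\tau]\!]$ of $\cL_B$ at $zJ_B|_{Q_B=0}$, is that $zI-ze^{D_k/z}J_B\in\mathfrak m^kT_0$ modulo $\mathfrak m^{k+1}$. This is exactly the set of $\delta J_B\equiv\sum_i\delta^i\phi_ie^{\tau/z}$ with $\delta^i\in\mathfrak m^k\C[z][\![\tau]\!]$, so the inductive step closes. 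The facts $zT\subset\cL$ and $T\cap\cL=zT$ play no role.
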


\section{Flag bundles, associated abelian quotients, and Weyl-invariant Givental cones}\label{sec:flag_bundles}
In this section we recall the construction (which we consider folklore) of partial flag bundles as GIT quotients of 
certain vector bundles by a product of general linear groups. The associated abelian quotients are obtained as GIT quotients by a maximal torus. These abelian quotients carry an action of the Weyl group, leading to the notion of \emph{Weyl-invariant Givental cone} of an abelian quotient, studied in \cite{CLS}. In the last subsection we discuss certain natural embeddings of flag bundles which will be used in the proof of the mirror theorem.  

\subsection{Partial flag bundles as GIT quotients}\label{subsection: flag_bundles_GIT}
Let $B$ be a smooth projective variety, and let $V\to B$ be a vector bundle.
Let $r_\bullet=\{r_1<\dots<r_l\}$ be a subset of $\{1,2,\dots,\rank(V)-1=n-1\}$.
Set
\[
\widetilde{V}=\bigoplus_{m=1}^{l-1}\Hom(\cO_B^{\oplus r_m},\cO_B^{\oplus r_{m+1}})\oplus\Hom(\cO_B^{\oplus r_l},V), \quad \G=\prod_{m=1}^{l}\GL_{r_m}(\C). 
\]
Let $\G$ act on $\widetilde{V}$ as follows: for $(F_1,\dots,F_l)\in\widetilde{V}$ and $(g_1,\dots,g_l)\in\G$, we define
\[
(g_1,\dots,g_l)\cdot(F_1,\dots,F_l) = (g_2^{-1}\circ F_1\circ g_1,g_3^{-1}\circ F_2\circ g_2,\dots,g_l^{-1}\circ F_{l-1}\circ g_{l-1},F_l\circ g_l).
\]
Let $\T\subset\G$ be the maximal torus consisting of all diagonal matrices.
Let $\theta\in\chi(\G)$ be the character $\theta\colon\G\to\C^\times$ given by the product of all determinants of each factor $\GL_{r_m}(\C)$, and let $L_\theta\cong\cO_{\widetilde{V}}$ be the trivial line bundle on $\widetilde{V}$, with $\G$-linearization induced by $\theta$. Viewing $\theta$ as a character of $\T$ (by restricting) makes $L_\theta$ into a $\T$-linearized line bundle as well.
We define the \emph{nonabelian quotient} $\widetilde{V}/\!/\G$ and the \emph{abelian quotient} $\widetilde{V}/\!/\T$ to be the following relative GIT quotients:
\[
\widetilde{V}/\!/\!_{\theta}\G := \underline{\mathrm{Proj}}_B \left( \bigoplus_{k\geq0} \pi_*\left(L_\theta^{\otimes k}\right)^\G \right),\qquad \widetilde{V}/\!/\!_{\theta}\T := \underline{\mathrm{Proj}}_B \left( \bigoplus_{k\geq0} \pi_*\left(L_\theta^{\otimes k}\right)^\T \right)
\]
where $\pi\colon\widetilde{V}\to B$ denotes the canonical projection. 

In either case, there are no strictly semistable points and the groups $\G$ and $\T$ act freely on their respective stable loci with respect to $L_\theta$.
For the $L_\theta$-linearized $\G$-action, a point $(F_1,\dots,F_l)\in\widetilde{V}$ is stable if and only if each $F_i$ is injective. It follows that
the nonabelian quotient is naturally identified with the partial flag bundle over $B$ (whose fiber at $b\in B$ is the partial flag variety $\Fl(r_\bullet,V_b)$). The abelian quotient is a smooth projective variety which is a toric bundle over $B$. 

If $M$ is a line bundle on $B$ and we replace $V$ by $V\otimes M$ in the above construction, then there are canonical isomorphisms (of $B$-schemes) $\widetilde{V}/\!/\!_{\theta}\G\cong \widetilde{V\otimes M}/\!/\!_{\theta}\G$ and $\widetilde{V}/\!/\!_{\theta}\T\cong \widetilde{V\otimes M}/\!/\!_{\theta}\T$. In particular, by fixing an ample line bundle $\mathcal{O}_B(1)$ on $B$ and taking $M\cong\mathcal{O}_B(-m)$ for sufficiently large $m$, we may (and will from now on) assume that the dual of $V$ is generated by global sections.

We introduce notations for abelian/nonabelian quotients.

\begin{notation}\label{notation:flag_bundle}
    Let $V\to B$ and $r_\bullet$ be as above.
    \begin{enumerate}
        \item 
        We write $\Fl(r_\bullet,V)$ (resp. $\Fl(r_\bullet,V)_{\operatorname{ab}}$) for the nonabelian quotient $\widetilde{V}/\!/\G$ (resp. the abelian quotient $\widetilde{V}/\!/\T$).
        \item 
        A $\G$-linearized vector bundle $E$ over $\widetilde{V}$ induces a vector bundle over $\Fl(r_\bullet,V)$, which we denote $E_\G$, and also (by viewing it as a $\T$-linearized bundle) a vector bundle over the abelian quotient $\Fl(r_\bullet,V)_{\operatorname{ab}}$, denoted $E_\T$. Each of these is obtained by restricting $E$ to the corresponding stable locus and then descending to the quotient, which is always possible as all stable points have trivial stabilizers. In this paper, we will apply the above construction to bundles of the form $E=\pi^*F\otimes (\mathcal{O}_{\widetilde{V}}\otimes R)$ where $F$ is vector bundle on $B$ (with trivial $\G$-action) and $R$ is a finite-dimensional $\G$-representation.
        \item 
        We denote by $S_1\subset S_2\subset\cdots\subset S_l=:S\subset\pi^*V$ the tautological flag of subbundles over $\Fl(r_\bullet,V)$.
        Note that each $S_m$ can be realized as $(E_{r_m})_\G$ where $E_{r_m}=\mathcal{O}_{\widetilde{V}}\otimes\C^{r_m}$ is the trivial bundle with fiber the dual of the standard representation of $\GL_{r_m}(\C)$.
        We denote by $(S_m)_\T$ the bundle over $\Fl(r_\bullet,V)_{\operatorname{ab}}$ induced by $E_{r_m}$.
        We set $S_\T:=(S_l)_\T$. 
        \item 
        The \emph{Weyl group} $\W$ is the group $N(\T)/\T$, where $N(\T)$ denotes the normalizer in $\G$.
        It is isomorphic to the product $\prod_{m=1}^l\mathfrak{S}_{r_m}$ of symmetric groups and acts on $\Fl(r_\bullet,V)_{\operatorname{ab}}$. Therefore, it also acts on the cohomology $H^*(\Fl(r_\bullet,V)_{\operatorname{ab}})$. Results of Ellingsrud-Str\o mme \cite{ES}, Martin \cite{Martin}, Kirwan \cite{Kirwan} relate the cohomology of a nonabelian quotient to the cohomology of the associated abelian quotient with its Weyl group action, see \cite[Section 3.1]{CFKS} for a more detailed description. In particular, there is a natural surjective ring homomorphism
        \begin{equation}\label{equation:cohomology_surjection}
        \varrho : H^*(\Fl(r_\bullet,V)_{\operatorname{ab}})^\W\twoheadrightarrow H^*(\Fl(r_\bullet,V))
        \end{equation}
        \item
        For each $1\leq m\leq l$, the $\T$-representation $E_{r_m}$ splits as a direct sum of $1$-dimensional representations, hence the bundle $(S_m^\vee)_\T$ splits into a direct sum of line bundles, $(S_m^\vee)_\T=\oplus_{j=1}^{r_m}M_j^{(m)}$.
        We denote the corresponding first Chern classes $H^{(m)}_j:=c_1(M_j^{(m)})\in H^*(\Fl(r_\bullet,V)_{\operatorname{ab}})$; these are represented by divisors on 
        $\Fl(r_\bullet,V)_{\operatorname{ab}}$ and we will also write $\mathcal{O}(H^{(m)}_i):=M^{(m)}_i$ for the line bundles. 
        The action of $\mathfrak{S}_{r_m}\subset\W$ on $H^*(\Fl(r_\bullet,V)_{\operatorname{ab}})$ permutes $H^{(m)}_1,\dots,H^{(m)}_{r_m}$.
        
        By abuse of notation, we use $H^{(m)}_1,\dots,H^{(m)}_{r_m}$ to denote the Chern roots of $S_m^\vee$ as well. This is justified, as the homomorphism $\varrho$ from \eqref{equation:cohomology_surjection} sends the Chern classes of $(S_m)_\T$ to the corresponding Chern classes of $S_m$, for each $1\leq m\leq l$. 
        
        For future reference, we describe explicitly the Lie algebra bundle  $\mathfrak{g}/\mathfrak{t}$ on $\Fl(r_\bullet,V)_{\operatorname{ab}}$ (with its decomposition $\mathfrak{g}/\mathfrak{t}=\oplus_{\alpha:\; {\mathrm{root\; of }}\;\G}L_\alpha$), which appears in the abelian/nonabelian correspondence, as explained in the Introduction. The Lie
        algebra $\mathfrak{g}=\oplus_{m=1}^l Lie(\mathrm{GL}_{r_m}(\C))$, viewed as the adjoint $\T$-representation, gives rise to the vector bundle
        $$\bigoplus_{m=1}^l\left((S_m^\vee)_\T\otimes(S_m)_\T\right)=\bigoplus_{m=1}^l\bigoplus_{1\leq i,j\leq r_m}\left(\mathcal{O}(H_i^{(m)})\otimes \mathcal{O}(-H_j^{(m)})\right),$$
        whose trivial subbundle of rank $r_1+\dots +r_l$ (coming from the triples $(m,i,j)$ with $i=j$) corresponds to the Cartan subalgebra $\mathfrak{t}$. Hence
        \begin{equation}\label{eqn:gmodt_explicit}
            \mathfrak{g}/\mathfrak{t}=\bigoplus_{m=1}^l\bigoplus_{1\leq i\neq j\leq r_m}\mathcal{O}\left( H_i^{(m)}-H_j^{(m)}\right).
        \end{equation}
        \item 
        Let 
        \begin{equation}\label{eqn:canonical_splitting}
        H^2(\Fl(r_\bullet,V)_{\operatorname{ab}},\Z)\cong H^2(B,\Z)\oplus\bigoplus_{m=1}^l\bigoplus_{i=1}^{r_m}\Z\cdot H^{(m)}_i
        \end{equation}
        be the canonical splitting \cite[Section 3.3]{Koto:mirror}.
        Its dual gives the embedding
        \[
        \NE(\Fl(r_\bullet,V)_{\operatorname{ab}}) \hookrightarrow \NE(B)\oplus\bigoplus_{m=1}^l\bigoplus_{i=1}^{r_m}\Z_{\geq0}\cdot d^{(m)}_i.
        \]
        We write $q^{(m)}_i$ ($1\leq m\leq l$, $1\leq i\leq r_m$) for the Novikov variable associated with the curve class $d^{(m)}_i$.
        We let $\W$ act on the Novikov ring by permuting $q^{(m)}_i$'s.
        \item 
        Taking the $\W$-invariant part of the splitting \eqref{eqn:canonical_splitting}, we obtain the splitting
        \[
        H^2(\Fl(r_\bullet,V),\Z)\cong H^2(B,\Z)\oplus\bigoplus_{m=1}^l \Z\cdot H^{(m)}
        \]
        where $H^{(m)}$ ($1\leq m\leq l$) denotes the sum $\sum_{i=1}^{r_m}H^{(m)}_i$, i.e., $H^{(m)}=c_1(S_m)$.
        Its dual gives the embedding
        \[
        \NE(\Fl(r_\bullet,V)) \hookrightarrow \NE(B)\oplus\bigoplus_{m=1}^l\Z_{\geq0}\cdot d^{(m)}.
        \]
        We write $q^{(m)}$ ($1\leq m\leq l$) for the Novikov variable associated with the curve class $d^{(m)}$.
    \end{enumerate}
\end{notation}

\subsection{Weyl-invariant Givental cone}\label{subsec:Weyl_invariant_cone_abelian_quotient}
We introduce a $\W$-invariant Givental cone for the abelian quotient $\Fl(r_\bullet,V)_{\operatorname{ab}}$.
This was first studied in \cite{CLS} in order to give a reformulation of the abelian/nonabelian correspondence using Givental's formalism.

We define $\cH_{\Fl(r_\bullet,V)_{\operatorname{ab}}}^\W$ and $\cL_{\Fl(r_\bullet,V)_{\operatorname{ab}}}^\W$ to be the $\W$-invariant part of $\cH_{\Fl(r_\bullet,V)_{\operatorname{ab}}}$ and $\cL_{\Fl(r_\bullet,V)_{\operatorname{ab}}}$, respectively.
More precisely, we set 
\[
\cH_{\Fl(r_\bullet,V)_{\operatorname{ab}}}^\W:=H^*(\Fl(r_\bullet,V)_{\operatorname{ab}})[z,z^{-1}][\![q_\bullet^\bullet,Q_B]\!]^\W.
\]
Here the $\W$-action is the combined one, from the action on $H^*(\Fl(r_\bullet,V)_{\operatorname{ab}})$ and the action on the Novikov variables $q_\bullet^\bullet$. Then, for any formal parameters $x=(x_1,x_2,\dots)$, the $\C[\![q_\bullet^\bullet,Q_B]\!]^\W[\![x]\!]$-valued points on $\cL_{\Fl(r_\bullet,V)_{\operatorname{ab}}}^\W$ are those points of $\cL_{\Fl(r_\bullet,V)_{\operatorname{ab}}}(\C[\![q_\bullet^\bullet,Q_B,x]\!])$ of the same form as \eqref{eqn:point_on_cone}, but with $\bt(z)\in H^*(\Fl(r_\bullet,V)_{\operatorname{ab}})[z][\![q_\bullet^\bullet,Q_B]\!]^\W[\![x]\!]$.
In other words, 
\[
\cL_{\Fl(r_\bullet,V)_{\operatorname{ab}}}^\W := \cL_{\Fl(r_\bullet,V)_{\operatorname{ab}}} \cap \cH_{\Fl(r_\bullet,V)_{\operatorname{ab}}}^\W.
\]

\subsubsection{Weyl-invariant equivariant Givental cone}
We consider the torus-equivariant counterpart of the $\W$-invariant Givental cone.

Let $\bbS$ be an algebraic torus acting on $\widetilde{V}$.
We assume that the actions of $\T$ and $\bbS$ on $\widetilde{V}$ are compatible: for any $t\in\T$, $s\in\bbS$ and $v\in\widetilde{V}$, we have $t\cdot(s\cdot v) = s\cdot(t\cdot v)$.
In this paper, we will deal with the case where $V=B\times\C^N$ is the trivial bundle of rank $N$, the rank of $\bbS$ is $N$, and $\bbS$ acts on $V$ diagonally. The action of $\bbS$ on $\widetilde{V}$ is then induced from the action on its summand $\Hom(\mathcal{O}_B^{\oplus r_l},V)$.

The \emph{$\W$-invariant $\bbS$-equivariant Givental cone} $\cL^\W_{\Fl(r_\bullet,V)_{\operatorname{ab}},\bbS}$ is defined as follows:
\begin{align*}
    \cH_{\Fl(r_\bullet,V)_{\operatorname{ab}},\bbS}^\W&:=H^*_\bbS(\Fl(r_\bullet,V)_{\operatorname{ab}})_\loc(\!(z^{-1})\!)[\![q_\bullet^\bullet,Q_B]\!]^\W, \\
    \cL_{\Fl(r_\bullet,V)_{\operatorname{ab}},\bbS}^\W &:= \cL_{\Fl(r_\bullet,V)_{\operatorname{ab}},\bbS} \cap \cH_{\Fl(r_\bullet,V)_{\operatorname{ab}},\bbS}^\W.
\end{align*}
As in the usual theory, any $H^*_\bbS(\pt)_\loc[\![q_\bullet^\bullet,Q_B]\!]^\W[\![x]\!]$-valued point on $\cL_{\Fl(r_\bullet,V)_{\operatorname{ab}},\bbS}^\W$ can be written as $M_{\Fl(r_\bullet,V)_{\operatorname{ab}}}^\bbS(\sigma(x))f$ where
\begin{align*}
    \sigma(x)&\in H^*_\bbS(\Fl(r_\bullet,V)_{\operatorname{ab}})_\loc[\![q^\bullet_\bullet,Q_B]\!]^\W[\![x]\!],  \\
    f&\in H^*_\bbS(\Fl(r_\bullet,V)_{\operatorname{ab}})_\loc[z][\![q^\bullet_\bullet,Q_B]\!]^\W[\![x]\!]
\end{align*}
with $\sigma(x)|_{q^\bullet_\bullet=Q_B=x=0}=0$ and $f|_{q^\bullet_\bullet=Q_B=x=0}=1$.

\subsection{Embeddings of flag bundles as zero loci}\label{subsection:embeddings}
In \cite{IK:quantum} and \cite{Koto:mirror}, mirror theorems for toric bundles (that is, for abelian quotients) were proved via embedding them as zero loci in larger {\em trivial} toric bundles. In this subsection, we explain that the construction can be made at the level of prequotients, and descended to both the nonabelian and the abelian quotients. 

Continuing to use the notation introduced in subsection \ref{subsection: flag_bundles_GIT}, suppose that $V$ is a subbundle in a vector bundle $U\to B$ of rank $N>n$ and let $\mathcal{Q}=U/V$ be the quotient vector bundle. Then we have the associated $\G$-equivariant vector bundle $\pi:\widetilde {U}\to B$, with $\widetilde{V}\subset \widetilde{U}$ a $\G$-subbundle and $\widetilde{U}/\widetilde{V}\cong\Hom(\cO_B^{\oplus r_l},\mathcal{Q})\cong\mathcal{Q}\otimes(\mathcal{O}_B\otimes \mathrm{St}_{r_l})$, where $\mathrm{St}_{r_l}$ is the standard representation of $GL_{r_l}(\C)$. (Note that $\G$ acts on $\widetilde{U}/\widetilde{V}$ via the projection $\G\to GL_{r_l}(\C)$.) 
The vector bundle 
$\pi^*\Hom(\cO_B^{\oplus r_l},U)$ on $\widetilde{U}$ has the tautological section $$\mathrm{taut}:\widetilde {U}\to \pi^*\Hom(\cO_B^{\oplus r_l},U).$$ 
Composing it with the vector bundle map $\pi^*\Hom(\cO_B^{\oplus r_l},U)\to \pi^*\Hom(\cO_B^{\oplus r_l},\mathcal{Q})$ gives a $\G$-equivariant section $\tilde{s}$ of $\pi^*\Hom(\cO_B^{\oplus r_l},\mathcal{Q})$, whose (transversal) zero locus in $\widetilde{U}$ is obviously $Z(\tilde{s})=\widetilde{V}$. The inclusion $\widetilde{V}\subset \widetilde{U}$ respects $\theta$-stability (for both $\G$ and $\T$). Passing to the GIT quotients, $\tilde{s}$ descends to sections 
$$s:\Fl(r_\bullet,U)\to p^*\mathcal{Q}\otimes S^\vee\;\;\;\; s_{\mathrm{ab}}:\Fl(r_\bullet,U)_{\mathrm{ab}}\to p_{\mathrm{ab}}^*\mathcal{Q}\otimes S_{\T}^\vee,$$
such that their transversal zero loci are 
$$Z(s)=\Fl(r_\bullet,V)\subset \Fl(r_\bullet,U),\;\;\;\; Z(s_{\mathrm{ab}})=\Fl(r_\bullet,V)_{\mathrm{ab}}\subset \Fl(r_\bullet,U)_{\mathrm{ab}}.$$
(Here $p:\Fl(r_\bullet,U)\to B$ and $p_{\mathrm{ab}}:\Fl(r_\bullet,U)_{\mathrm{ab}}\to B$ are the canonical projections, $S=S_l$ is the last tautological subbundle on $\Fl(r_\bullet,U)$, and $S_\T$ is its ``abelianization".)

It is clear that the tautological flag of subbundles on $\Fl(r_\bullet,U)$ restricts to the tautological flag on $\Fl(r_\bullet,V)$, so we will use the notation $S_1\subset S_2\subset\cdots\subset S_l=:S$ in either case. Similarly, each $(S_m)_\T$ on $\Fl(r_\bullet,U)_{\mathrm{ab}}$ restricts to the corresponding bundle $(S_m)_\T$ on $\Fl(r_\bullet,V)_{\mathrm{ab}}$. The cohomology classes $H_j^{(m)}$ therefore correspond under pull-back, as do the Chern classes of the $S_m$'s. Finally, push-forward for both inclusions is an isomorphism on $H_2(-,\Z)$, so the Novikov rings are canonically isomorphic.

\section{Mirror theorem}\label{sec:main_theorem}

Let $B$ be a smooth projective variety, let $V\to B$ be a vector bundle of rank $n$ whose dual is generated by global sections, and let $r_\bullet=\{r_1,\dots,r_l\}$ be a subset of $\{1,2,\dots,n-1\}$ with $r_1<\dots<r_l$.
We use Notation \ref{notation:flag_bundle}.

The purpose of this section is to state and prove the mirror theorem for $\Fl(r_\bullet,V)$.
Before proceeding, we recall a well-known fact about equivariant cohomology:
for any $\G$-space $X$, the Weyl group $\W$ acts on $H^*_\T(X)$, and we have an isomorphism $H_\G^*(X)\cong H_\T^*(X)^\W$. In particular, we can speak of $H^*_\T(\widetilde{V})^\W$, though in this case the action is easy to describe directly. Namely, for each $1\leq m\leq l$, let $\lambda_1^{(m)},\dots, \lambda_{r_m}^{(m)}$ denote the equivariant parameters for the factor of $\T$ given by the maximal torus in $GL_{r_m}(\C)$, so that $H^*_\T(\mathrm{pt})=\C[\lambda_{\bullet}^\bullet]$. Then $H^*_\T(\widetilde{V})\cong H^*(B)[\lambda_{\bullet}^\bullet]$ and $\W=\prod_{m=1}^l\mathfrak{S}_{r_m}$ acts by permuting the 
$\lambda_{\bullet}^\bullet$'s in the obvious way.

\begin{theorem}[mirror theorem]\label{thm:mirror_Fl}
    Assume that the dual of $V$ is globally generated.
    Let $z\cdot I^{\lambda_\bullet^\bullet}_{\widetilde{V}}(x,z) \in H^*_\T(\widetilde{V})^\W[z,z^{-1}][\![Q_B,x]\!]$ be a $\W$-invariant point on the Givental cone $\cL_{\widetilde{V}}$ (in particular, $I^{\lambda_\bullet^\bullet}_{\widetilde{V}}$ depends on $\lambda_\bullet^\bullet$ polynomially). 
    Define the $H^*(\Fl(r_\bullet,V))$-valued function $I_{\Fl(r_\bullet,V)}(x,z)$ to be
    \begin{multline*}
    \sum_{ \substack{k^{(m)}\geq0; \\1\leq m\leq l} } \left[ \prod_{m=1}^l (q^{(m)})^{k^{(m)}}e^{\left(k^{(m)}+\frac{H^{(m)}}{z}\right)t^{(m)}} \right] \cdot \sum_{ \substack{k_i^{(m)}\geq0; 1\leq m\leq l, \\ 1\leq i\leq r_m,\sum_ik_i^{(m)}=k^{(m)}} } \frac{ I^{H_\bullet^\bullet+k_\bullet^\bullet z}_{\widetilde{V}}(x,z) }{\prod_{i=1}^{r_l} \prod_{c=1}^{k_i^{(l)}} \prod_{ \substack{\delta\colon\mathrm{Chern\ roots} \\ \mathrm{of\ } V} } (H_i^{(l)} + \delta + cz)}	\\
    \cdot \prod_{m=1}^{l-1}\prod_{i=1}^{r_m}\prod_{j=1}^{r_{m+1}}\frac{\prod_{c=-\infty}^{0}(H_i^{(m)}-H_j^{(m+1)}+cz)}{\prod_{c=-\infty}^{k_i^{(m)}-k_j^{(m+1)}}(H_i^{(m)}-H_j^{(m+1)}+cz)}
    \cdot \prod_{m=1}^l\prod_{i\neq j}\frac{\prod_{c=-\infty}^{k_i^{(m)}-k_j^{(m)}}(H_i^{(m)}-H_j^{(m)}+cz)}{\prod_{c=-\infty}^{0}(H_i^{(m)}-H_j^{(m)}+cz)}
    \end{multline*}
    Then, $z\cdot I_{\Fl(r_\bullet,V)}(x,z)$ lies in the Givental cone $\cL_{\Fl(r_\bullet,V)}$.
\end{theorem}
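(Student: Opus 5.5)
We follow the three-step strategy outlined in the Introduction.

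\emph{Step 1: reduction to a trivial flag bundle.} Since $V^\vee$ is globally generated, choose a surjection $\cO_B^{\oplus N}\twoheadrightarrow V^\vee$ and dualize to obtain $V\subset U:=B\times\C^N$ with quotient $\mathcal{Q}=U/V$. By \S\ref{subsection:embeddings}, $\Fl(r_\bullet,V)$ is the transversal zero locus of the section $s$ of $E:=p^*\mathcal{Q}\otimes S^\vee$ on $\Fl(r_\bullet,U)=B\times\Fl(r_\bullet,\C^N)$. The bundle $\mathcal{Q}$ is a quotient of a trivial bundle and $S^\vee$ is globally generated on the trivial flag bundle, so $E$ is globally generated, hence convex. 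By Lemma \ref{lem:I-function_of_subvarieties}, it then suffices to produce a point $f$ of $\cL_{B\times\Fl(r_\bullet,\C^N),(E,e_\mu)}$ whose restriction $\iota^*f$ has a non-equivariant limit equal to $z\cdot I_{\Fl(r_\bullet,V)}$. The natural candidate is obtained by multiplying the summand of index $k_\bullet^\bullet$ by the hypergeometric factor $\prod_{i}\prod_{c=1}^{k^{(l)}_i}\prod_{\epsilon}(H^{(l)}_i+\epsilon+\mu+cz)$, where $\epsilon$ runs over the Chern roots of $\mathcal{Q}$. Since $c(V)c(\mathcal{Q})=1$, in the limit $\mu\to 0$ this cancels against the $V$-denominator: the product over the roots of $\mathcal{Q}$ divided by the trivial $\C^N$-factor $\prod(H_i+cz)^N$ becomes $1/\prod_\delta(H_i+\delta+cz)$. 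The $B\times\C^N$-input $I_{\widetilde U}$ is taken to be $I_{\widetilde V}$ twisted accordingly, as in the twisted function of Lemma \ref{lem:twisted_function}. This reduces the theorem to that lemma, a twisted mirror theorem on the trivial product.

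\emph{Step 2: abelianization.} We use Theorem \ref{thm:intro_ab/nonab}, or more precisely its torus-equivariant form Theorem \ref{thm:abelian/nonabelian_correspondence}, with $\bbS=(\C^\times)^N$ acting on $\C^N$. The twisted $I$-function on $B\times\Fl(r_\bullet,\C^N)$ is exactly the $\gt$-modification of a $\W$-invariant function on $B\times\FlT$. The $\gt$-modification contributes the last product in the formula, over the roots $H_i^{(m)}-H_j^{(m)}$. The $\W$-invariance of the abelian function follows from the $\W$-invariance of $I_{\widetilde V}$. We also need to check that the abelian function satisfies the Divisor Equation~\eqref{eqn:Divisor_Equation}, which should follow from the presence of the factors $e^{(k+H/z)t}$. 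To handle the twist by $E$, we twist equivariantly on the abelian side by $\bigoplus_i p^*\mathcal{Q}\otimes\cO(H^{(l)}_i)$; its $\W$-average is compatible with $E$ on the nonabelian side.

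\emph{Step 3: the abelian twisted mirror theorem.} This is the hardest step. $B\times\FlT$ is a trivial toric bundle, and we want the twisted abelian function to lie on the twisted cone. We proceed as in \cite{IK:quantum,Koto:mirror}:
\begin{itemize}
\item Using Lemma \ref{lem:moving_points}, write $I_{\widetilde V}$ (via Remark \ref{rmk:GW_bundle}, as a point of the $(\widetilde V,e_\T^{-1})$-twisted cone of $B$) as $e^{D/z}$ applied to a twisted $J$-function.
\item Express the twisted cone through Theorem \ref{thm:QRR} as $\Delta$-operators applied to $\cL_{B\times\FlT}$, whose $J$-function is a product of $J_B$ and the toric $I$-function of $\FlT$.
\item Realize the substitution $\lambda\mapsto H+kz$ together with the hypergeometric factors as the action of $\Delta$-operators combined with the shift $Q$-dependence. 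Concretely, we rewrite $\lambda_i$ as the operator $z q_i\partial_{q_i}$ acting on $q^ke^{(k+H/z)t}$, which turns the substitution into a differential operator of the type allowed in Lemma \ref{lem:differential_operator_preserves_cone}.
\end{itemize}
The main obstacle is the non-commutation of the QRR operators with this substitution, and in particular controlling the Novikov shift $Q^d\mapsto Q^d\lambda^{-d\cdot c_1}$ once $\lambda$ becomes $H+kz$. I would first try the formal-variable trick of \cite{IK:quantum}: keep $\lambda$ as an independent parameter, verify the claim as an identity of operators on the cone, and substitute only at the end, justifying this by the polynomiality of $I_{\widetilde V}$ in $\lambda$.
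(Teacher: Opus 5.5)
Your Steps 1 and 2 are the paper's: the zero-locus embedding $\Fl(r_\bullet,V)\subset B\times\Fl(r_\bullet,N)$ of $\mathcal{Q}\boxtimes S^\vee$, Lemma~\ref{lem:I-function_of_subvarieties} with the splitting principle, and the twisted abelian/nonabelian correspondence (Corollary~\ref{cor:twisted_abelian_nonabelian_correspondence}) applied via $F(\mu)=\bigl([\Fab(\mu)]_{t^{(m)}_i\to t^{(m)}}\bigr)_\gt$. The gap is in Step 3, which carries the real content. You name the obstacle but do not overcome it, and your fallback does not work. The intermediate objects (the $\Delta^{\lambda}$-operators, and the operator $D(\lambda)$ produced by Lemma~\ref{lem:moving_points}) are power series in $\lambda^{-1}$. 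So they cannot be evaluated at $\lambda=H+kz$; for $k=0$ this would mean inverting a nilpotent class, and polynomiality of $I_{\widetilde V}$ does not repair this. In the paper the substitution is $\lambda^{(l)}_i\mapsto\mu+z\partial_{t^{(l)}_i}$, expanded at $\mu=\infty$. (It is $z\partial_{t_i}$, not $zq_i\partial_{q_i}$, that produces $H_i+k_iz$.) This is exactly why $F(\mu)$ uses $I^{\mu+H+kz}_{\widetilde V}$, with the twisting parameter $\mu$ built into the argument.

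The missing key idea is a cancellation of twists. From $0\to V\to\cO_B^{\oplus N}\to\mathcal{Q}\to0$, the class of $\widetilde V\oplus\mathcal{Q}^{\oplus r_l}$ in $\T$-equivariant $K$-theory is that of a trivial bundle. Hence, by \cite[Lemma 2.9]{Koto:mirror}, $\prod_{i}\Delta^{\lambda^{(l)}_i}_{\mathcal{Q}}\J^{\lambda}_{\widetilde V}(\btau)$ is a point of the \emph{untwisted} cone $\cL_B$ over $\C[\![Q_B,\btau,(\lambda^{(l)})^{-1}]\!]$. The Novikov shift you worry about disappears because $c_1(V)+c_1(\mathcal{Q})=0$.

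Lemma~\ref{lem:moving_points} then writes this point as $e^{D(\lambda)/z}J_B(\tau,z)$. A direct calculation gives
$\Fab(\mu)=(\Delta^\mu_{\mathcal{Q}\boxtimes S^\vee_\T})^{-1}\prod_i A(\mu,z\partial_{t^{(l)}_i},z)\,e^{D(\mu+z\partial_{t})/z}I_{B\times\FlT}$.
Here the explicit operators $A$ of Lemma~\ref{lem:QRR_operators} absorb the non-commutation $\Delta^{\mu+H_i+k_iz}_{\mathcal{Q}}/\Delta^{\mu+H_i}_{\mathcal{Q}}$ and produce the factors $\prod_{c}\prod_{\epsilon}(\mu+H^{(l)}_i+\epsilon+cz)$. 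Theorem~\ref{thm:QRR} and Lemma~\ref{lem:QRR_operators}(1) then put $z\Fab(\mu)$ on the twisted cone.

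Your first bullet expresses $I_{\widetilde V}$ through the $(\widetilde V,e^{-1}_\T)$-twisted $J$-function. That keeps the $\widetilde V$-twist separate from the $\mathcal{Q}$-twist, so it never reaches this cancellation.

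There is a second omission. Lemma~\ref{lem:moving_points} requires a family whose parameters span $H^*(B)$, and an arbitrary $I_{\widetilde V}(x,z)$ need not be such a family. You need the reduction of \S\ref{reduction2}: pass to the miniversal $\W$-invariant family $\J_{\widetilde V}(\widetilde\btau)$, and trade the $\lambda$-dependent directions $g_c(\lambda)$ for operators $g_c(z\partial_{t})$ (Lemma~\ref{lemma_reduction2}).
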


\begin{proof}
    Since $V^\vee$ is globally generated, we have a short exact sequence of vector bundles on $B$
    \begin{equation}\label{eqn:exact_sqn}
    0\to V\to \cO_B^{\oplus N}\to \mathcal{Q}\to 0
    \end{equation}
    for some $N\geq\operatorname{rank}V$ and some vector bundle $\mathcal{Q}\to B$.
    Therefore, we can apply the construction of subsection \ref{subsection:embeddings}, taking $U=\cO_B^{\oplus N}$, so that $\Fl(r_\bullet, U)$ is the product $B\times\Fl(r_\bullet,N)$.
    We obtain embeddings as zero loci
    \[\iota\colon\Fl(r_\bullet,V)= Z(s)\hookrightarrow B\times\Fl(r_\bullet,N),\;\;\;\;\iota_{\mathrm{ab}}\colon\Fl(r_\bullet,V)_{\mathrm{ab}}= Z(s_{\mathrm{ab}})\hookrightarrow B\times\Fl(r_\bullet,N)_{\mathrm{ab}}.\]
    Here $s$ is a section of the vector bundle $\mathcal{Q}\boxtimes S^\vee$ on $B\times\Fl(r_\bullet,N)$, with
 $S\to\Fl(r_\bullet,N)$ the tautological vector bundle of rank $r_l$, and $s_{\mathrm{ab}}$ is the corresponding section of $\mathcal{Q}\boxtimes (S_\T)^\vee$ on $B\times\Fl(r_\bullet,N)_{\mathrm{ab}}$. (In this case, the section $s$ corresponds to
 the homomorphism of vector bundles on $B\times\Fl(r_\bullet,N)$ given by the composition 
    $$p_2^*S\to \cO_{B\times\Fl(r_\bullet,N)}^{\oplus N}\to p_1^*\mathcal{Q},$$
    with $p_1$ and $p_2$ the projections on the two factors.)

    Now consider the $H^*(B\times\Fl(r_\bullet,N))$-valued function
    \begin{multline}\label{eqn:twisted_F}
        F(\mu) = \sum_{ \substack{k^{(m)}\geq0; \\1\leq m\leq l} } \left[ \prod_{m=1}^l (q^{(m)})^{k^{(m)}}e^{\left(k^{(m)}+\frac{H^{(m)}}{z}\right)t^{(m)}} \right] \cdot \sum_{ \substack{k_i^{(m)}\geq0; 1\leq m\leq l, \\ 1\leq i\leq r_m,\sum_ik_i^{(m)}=k^{(m)}} } I^{\mu+H_\bullet^\bullet+k_\bullet^\bullet z}_{\widetilde{V}}(x,z) \\ 
        \cdot \prod_{i=1}^{r_l}\prod_{c=1}^{k_i^{(l)}}\frac{ \prod_{ \substack{\epsilon\colon\mathrm{Chern\ roots} \\ \mathrm{of\ } \mathcal{Q}} } (\mu+H_i^{(l)}+\epsilon+cz) }{(H_i^{(l)}+cz)^N} 
        \cdot \prod_{m=1}^{l-1}\prod_{i=1}^{r_m}\prod_{j=1}^{r_{m+1}}\frac{\prod_{c=-\infty}^{0}(H_i^{(m)}-H_j^{(m+1)}+cz)}{\prod_{c=-\infty}^{k_i^{(m)}-k_j^{(m+1)}}(H_i^{(m)}-H_j^{(m+1)}+cz)} \\
        \cdot \prod_{m=1}^l\prod_{i\neq j}\frac{\prod_{c=-\infty}^{k_i^{(m)}-k_j^{(m)}}(H_i^{(m)}-H_j^{(m)}+cz)}{\prod_{c=-\infty}^{0}(H_i^{(m)}-H_j^{(m)}+cz)}
    \end{multline}

\begin{lemma}\label{lem:twisted_function}
    The function $z\cdot F(\mu)$ lies in the twisted Givental cone $\cL_{B\times\Fl(r_\bullet,N),(\mathcal{Q}\boxtimes S^\vee,e_\mu)}$.
\end{lemma}

We will prove this Lemma in Section \ref{sec:proof_of_Lemma}. Granting it for now, the proof of Theorem \ref{thm:mirror_Fl} can be readily completed. Indeed, the non-equivariant limit $\lim_{\mu\to0} \iota^*F(\mu)$ clearly exists and equals $\iota^*F(0)$. On the other hand, as $V$ and $\mathcal{Q}$ are related by the exact sequence \eqref{eqn:exact_sqn}, the splitting principle (\cite[$\S21$]{BT:differential}, \cite[Remark 3.2.3]{Fulton}) shows that $\iota^*F(0)=I_{\Fl(r_\bullet,V)}(x,z)$.  By Lemma \ref{lem:I-function_of_subvarieties}, we conclude that $z\cdot I_{\Fl(r_\bullet,V)}(x,z)$ lies in $\cL_{\Fl(r_\bullet,V)}$, as needed\footnote{Recall that $\iota_*:H_2(\Fl(r_\bullet,V),\Z)\to H_2(B\times\Fl(r_\bullet,N),\Z)$ is an isomorphism, so the Novikov ring is canonically identified with $\C[\![q^\bullet,Q_B]\!]$ for both $B\times\Fl(r_\bullet,N)$ and its subvariety $\Fl(r_\bullet,V)$.}.
     
\end{proof}

\begin{example}\label{ex:IK}
    If we set $l=1$ and $r_\bullet=\{r\}$, then Theorem \ref{thm:mirror_Fl} specializes to Theorem \ref{thm:mirror_Gr}.
    If we further set $r=1$, so that we are looking at the projective bundle $\mathbb{P}(V)$, then the $I$-function matches the one given by \cite[Theorem 3.3]{IK:quantum}.
\end{example}

\begin{example}\label{ex:Oh}
    Theorem \ref{thm:mirror_Fl} recovers Oh's mirror theorem, as we now explain.
    Let $V=\bigoplus_{j=1}^nL_j$ be a split vector bundle with $L_j^\vee$ ($1\leq j\leq n$) being globally generated.
    Let $J_B(\tau)=\sum_{d\in\NE(B)} J_d(\tau) Q_B^d$ be the $J$-function of $B$, and consider the $H^*(B)$-valued function
    \[
    I_{\widetilde{V}}^{\lambda_\bullet^\bullet} = \sum_{d\in\NE(B)} \left( \prod_{j=1}^n \prod_{i=1}^{r_l} \prod_{c=0}^{-c_1(L_j)\cdot d-1} (\lambda_i^{(l)} + c_1(L_j) - cz) \right) J_d(\tau) Q_B^d.
    \]
    It follows from the quantum Lefschetz theorem \cite{CG:quantum} that $z\cdot I_{\widetilde{V}}^{\lambda_\bullet^\bullet}$ lies in $\cL_{\widetilde{V},\T}$.
    Then, the $I$-function of $\Fl(r_\bullet,V)$ associated to $I_{\widetilde{V}}^{\lambda_\bullet^\bullet}$ coincides with Oh's $I$-function \cite{Oh:split_flag_bundle}.
\end{example}

It remains to prove Lemma \ref{lem:twisted_function}. In the argument we give in Section \ref{sec:proof_of_Lemma}, the \emph{full} abelian/nonabelian correspondence for trivial flag bundles plays an important role. The next section is dedicated to establishing it (in Theorem \ref{thm:abelian/nonabelian_correspondence}).

\section{Abelian/Nonabelian correspondence for trivial flag bundles}\label{sec:ab/nonab}

The abelian/nonabelian correspondence \cite{BCFK:ab/nonab,CFKS} (see also \cite{Webb,CLS} for later developments) is a conjectural relationship between the genus zero Gromow-Witten theory of a nonabelian GIT quotient and the genus zero Gromov-Witten theory of its associated abelian quotient.
Instead of discussing it in the general situation, we only establish a version of it for $B\times\Fl(r_\bullet,N)$ (Theorem \ref{thm:abelian/nonabelian_correspondence}) as that will be sufficient for the proof of Lemma \ref{lem:twisted_function}.

\subsection{Statement}\label{subsec:statement_of_ab/nonab}
Throughout this section, we fix a natural number $N$ and a subset $r_\bullet=\{r_1,\dots,r_l\}$ of $\{1,2,\dots,N-1\}$, with $r_1<\cdots<r_l$.

\begin{notation}\label{notation:S-action}
    \phantom{A}
    \begin{enumerate}
        \item 
        We write $\FlG$ for $\Fl(r_\bullet,N)$, and write $\FlT$ for $\Fl(r_\bullet,N)_{\operatorname{ab}}$.
        \item 
        Fix a $\C$-basis $\{\phi_i\}_{i\in I}$ of $H^*(B,\C)$ and write $\tau=\{\tau^a\}_{a\in I}$ for the dual coordinates.
        \item 
        Let $\bbS$ be an algebraic torus of rank $N$ acting on $\mathcal{O}_B^{\oplus N}$ diagonally.
        This induces $\bbS$-actions on $B\times\FlG$ and on $B\times\FlT$ (where the $\bbS$-action on $B$ is trivial) and the projections $B\times\FlG\to B$ and $B\times\FlT\to B$ are $\bbS$-equivariant. (The $\bbS$-action on $\FlG$ is the standard action of the maximal torus in $GL_N(\C)$; it has isolated fixed points and isolated $1$-dimensional orbits.)
        \item 
        We write $\nu_1,\dots,\nu_N$ for the $\bbS$-equivariant parameters.
    \end{enumerate}
\end{notation}

The $\bbS$-equivariant Givental space of $B\times\FlG$ and the $\W$-invariant part of the $\bbS$-equivariant Givental space of $B\times\FlT$ are 
\begin{align*}
\cH_{B\times\FlT,\bbS}^\W&=H^*_\bbS(B\times\FlT)_\loc(\!(z^{-1})\!)[\![q_\bullet^\bullet]\!]^\W[\![Q_B]\!], \\
\cH_{B\times\FlG,\bbS}&=H^*_\bbS(B\times\FlG)_\loc(\!(z^{-1})\!)[\![q^\bullet,Q_B]\!].
\end{align*}

Recall from \eqref{eqn:gmodt_explicit} that on the abelian quotient $B\times\FlT$ we have the Lie algebra bundle $\mathfrak{g}/\mathfrak{t}=\bigoplus_{m=1}^l\bigoplus_{1\leq i\neq j\leq r_m}\mathcal{O}\left( H_i^{(m)}-H_j^{(m)}\right)$. Following \cite{BCFK:ab/nonab},
for any effective curve class whose component in the $\FlT$-direction is $\sum_{m,i}k_i^{(m)}d_i^{(m)}$ ($k_i^{(m)}\in\Z_{\geq 0}$), the hypergeometric modification factor associated to $\mathfrak{g}/\mathfrak{t}$ is 
\begin{equation} \label{eqn:hypergeometric}
    \Hyp_{k^\bullet_\bullet}^{\mathfrak{g}/\mathfrak{t}}(H^\bullet_\bullet,z) := \prod_{m=1}^l\prod_{i\neq j}\frac{\prod_{c=-\infty}^{k_i^{(m)}-k_j^{(m)}}(H_i^{(m)}-H_j^{(m)}+cz)}{\prod_{c=-\infty}^{0}(H_i^{(m)}-H_j^{(m)}+cz)}.
\end{equation}
We now define the map 
$$
(\cdot)_\gt\colon\cH_{B\times\FlT,\bbS}^\W\to\cH_{B\times\FlG,\bbS}
$$
as follows: for any point
\[
I = \sum_{ \substack{k_i^{(m)}\geq0; \\ 1\leq m\leq l,  1\leq i\leq r_m} } \left(\prod_{m=1}^l\prod_{i=1}^{r_m} (q_i^{(m)})^{k_i^{(m)}}\right)\cdot I_{k^\bullet_\bullet}(Q_B,x,z)\in\cH_{B\times\FlT,\bbS}^\W
\]
with $I_{k_\bullet^\bullet}(Q_B,x,z)\in H^*_\bbS(B\times\FlT)_\loc(\!(z^{-1})\!)[\![Q_B,x]\!]$, we set
\begin{align}\label{eqn:g/t}
    I_\gt &= \sum_{k^\bullet_\bullet} \left( \prod_{m=1}^l (q^{(m)})^{\sum_{i=1}^{r_m}k_i^{(m)}}\right)\cdot \left(I_{k_\bullet^\bullet}(Q_B,x,z) \cdot \Hyp_{k^\bullet_\bullet}^{\mathfrak{g}/\mathfrak{t}}(H^\bullet_\bullet,z)\right).  
\end{align}
In words, we first multiply each term in the $q_\bullet^\bullet$-exapnsion of $I$ by the appropriate $\mathfrak{g}/\mathfrak{t}$-modification factor, and then we specialize the Novikov variables by setting $q_j^{(m)}=q^{(m)}$, for $j=1,\dots ,r_m$, $m=1, \dots ,l$. 
It is not hard to check that the formula \eqref{eqn:g/t} gives a well-defined element of $\cH_{B\times\FlG,\bbS}(\C[\![q^\bullet,Q_B,x]\!])$ (for example, see \cite[Lemma 4.1]{CLS}, where $I_\gt$ is called the Givental-Martin modification of $I$).
The non-equivariant version 
\[
(\cdot)_\gt\colon\cH_{B\times\FlT}^\W\to\cH_{B\times\FlG}
\] 
is defined by the same formula.

An $H^*_\bbS(\pt)_\loc[\![Q_B,q_\bullet^\bullet,t_\bullet^\bullet,x]\!]$-valued point $I(t_\bullet^\bullet,x,z)$ on $\cH_{B\times\FlT,\bbS}$ is said to satisfy \emph{the Divisor Equation} \cite{CLS} if, for any $1\leq m\leq l$ and $1\leq i\leq r_m$, the following equality holds:
\begin{equation}\label{eqn:Divisor_Equation}
    z\parfrac{I}{t_i^{(m)}}(t_\bullet^\bullet,x,z) = zq_i^{(m)}\parfrac{I}{q_i^{(m)}}(t_\bullet^\bullet,x,z) + H_i^{(m)}\cdot I(t_\bullet^\bullet,x,z).
\end{equation}

The aim of this section is to prove the following version of the abelian/nonabelian correspondence for $B\times\Fl$.

\begin{theorem}\label{thm:abelian/nonabelian_correspondence}
    Let $z\cdot I(t^\bullet_\bullet,x,z) \in \cL_{B\times\FlT,\bbS}$ be a point satisfying the Divisor Equation whose specialization $I(t^\bullet,x,z)$ is $\W$-invariant.
    Then, $z \cdot (I(t^\bullet,x,z))_\gt$ lies in $\cL_{B\times\Fl,\bbS}$.
\end{theorem}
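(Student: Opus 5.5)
The plan is to prove Theorem~\ref{thm:abelian/nonabelian_correspondence} by $\bbS$-fixed point localization. I will use a Givental/Brown-type characterization of points on $\cL_{B\times\FlG,\bbS}$ and check its conditions for $I_\gt$ using the corresponding properties of $I$, which hold because $z\cdot I\in\cL_{B\times\FlT,\bbS}$. This follows the strategy of \cite{CFKS} for $\FlG$ alone, now carried out relative to $B$.

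\emph{Step 1: the characterization.} The $\bbS$-fixed loci of $B\times\FlG$ are the copies $B\times\{p\}$, where $p$ runs over the isolated fixed points of $\FlG$ (coordinate flags), and the $1$-dimensional $\bbS$-orbits are $B\times C$ with $C\cong\PP^1$ an orbit closure in $\FlG$. I will state and prove (as Theorem~\ref{thm:characterization}) that $f\in\cH_{B\times\FlG,\bbS}$ lies on the cone if and only if three conditions hold.
\begin{enumerate}
\item[(a)] For each fixed point $p$, the restriction $f|_p$, viewed as a function of $z$, has poles only at $z=0$, at $z=\infty$, and at $z=w/k$, where $w$ is a tangent weight at $p$ and $k\geq1$.
\item[(b)] The principal part of $f|_p$ at $z=w/k$ is determined by $f|_{p'}$, where $p'$ is the other fixed point on the orbit with tangent weight $w$. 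Explicitly, it equals an explicit recursion coefficient times $q^{k\deg C}$ times $f|_{p'}$ evaluated at $z=w/k$.
\item[(c)] The Laurent expansion of $f|_p$ at $z=0$ is a point of the twisted cone $\cL_{B,(T_p\FlG\otimes\cO_B,e_\bbS^{-1})}$.
\end{enumerate}
Condition (c) is equivalently a point of $\cL_B$ after applying the quantum Riemann--Roch operators of Theorem~\ref{thm:QRR}. I would prove this characterization by the standard virtual localization on $\ovcM_{0,n}(B\times\FlG,d)$. The fixed loci there are products of moduli of maps to $B$ with graphs in $\FlG$, exactly as in Brown's toric-bundle argument. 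The product structure $B\times\FlG$ makes the graph combinatorics identical to the $B=\pt$ case, with vertex contributions replaced by twisted invariants of $B$. The same characterization then applies to $B\times\FlT$.

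\emph{Step 2: matching the conditions.} Each fixed point $p$ of $\FlG$ corresponds to a $\W$-orbit of fixed points of $\FlT$; I choose a lift $\tilde p$. Restricting $I_\gt$ to $p$ amounts to restricting $I$ to $\tilde p$, substituting $H_i^{(m)}|_{\tilde p}$ (certain $\nu$'s), and multiplying by $\Hyp^{\gt}_{k}|_{\tilde p}$. The tangent space relation
\[
T_{\tilde p}\FlT=T_p\FlG\oplus(\gt)|_{\tilde p}
\]
holds in $\bbS$-equivariant K-theory: the tangent to the abelian quotient modulo the root bundle. The Hyp factor is exactly the ratio of the quantum Riemann--Roch operators for $e^{-1}_\bbS$-twisting by $\gt$ restricted to $\tilde p$. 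Since the Divisor Equation lets the degree $k$ be traded for a shift $H\mapsto H+kz$, this turns (c) for $I$ at $\tilde p$ into (c) for $I_\gt$ at $p$. For (a) and (b), the $1$-dimensional orbits of $\FlG$ through $p$ lift to orbits of $\FlT$ through $\tilde p$ whose weights are not roots. The remaining abelian poles, at weights $(H_i^{(m)}-H_j^{(m)})|_{\tilde p}/k$ coming from root directions, should be cancelled by zeros of $\Hyp^{\gt}$. Alternatively, by $\W$-invariance, the contributions of $\tilde p$ and its reflection cancel. On the genuine orbits, the abelian recursion coefficient times the ratio of Hyp factors at the two endpoints should equal the nonabelian coefficient. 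This is the computation done in \cite{CFKS}, and it is insensitive to the $B$-direction.

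\emph{Main obstacle.} I expect the difficulty to be condition (c), or equivalently a polynomiality (Brown) condition, after the specialization $q_i^{(m)}\mapsto q^{(m)}$. The Hyp factor is not a quantum Riemann--Roch operator acting on a fixed point of the cone. It is one only after using the Divisor Equation~\eqref{eqn:Divisor_Equation} to rewrite $Q$-degree shifts as operators $z\partial_{t}$. Specializing $t_\bullet^\bullet$ to $t^\bullet$ then requires $\W$-invariance to see that the resulting differential operator descends. My first attempt is to write
\[
\Hyp^{\gt}_k(H,z)=\Hyp^{\gt}(H+z\,t\text{-derivatives})\cdot(\text{factor at }k=0)
\]
acting on $I$, which should reduce (c) to Lemma~\ref{lem:differential_operator_preserves_cone} together with Theorem~\ref{thm:QRR} on $\cL_B$. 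If that fails, I would fall back on the double-generating-function polynomiality criterion used in \cite{CFKS}.
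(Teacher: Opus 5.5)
There is a genuine gap in Step 2. There you need the pole structure of $\iota_{\tilde p}^*I$, and you propose to read it off from a Brown/GKM-type characterization of $\cL_{B\times\FlT,\bbS}$. No such characterization is available once $l\geq 2$, because the $\bbS$-action on $\FlT$ is not GKM.

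By your own decomposition $T_{\tilde p}\FlT=T_p\FlG\oplus(\gt)|_{\tilde p}$, repeated weights occur at every lift. If $e_a$ lies in the $(m-1)$-st step of the coordinate flag $p$, and $e_b$ lies in the $m$-th step but not the $(m-1)$-st, then $\pm(\nu_a-\nu_b)$ occurs both in $T_p\FlG$ and among the roots of $\GL_{r_m}$. For example, take $\Fl(1,2;3)$ and the lift of $\langle e_1\rangle\subset\langle e_1,e_2\rangle$. The weight $\pm(\nu_1-\nu_2)$ lying in $T_p\FlG$ then has multiplicity two in $T_{\tilde p}\FlT$. Moreover $\FlT^{\bbS}$ has $\PP^1$-components, lying over the points where both columns of the $\Hom(\C^2,\C^3)$-component span the same coordinate line.

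Consequences for your argument:
\begin{itemize}
\item Your claim that orbits of $\FlG$ lift to orbits ``whose weights are not roots'' is false.
\item The poles of $\iota^*_{\tilde p}I$ in these directions need not be simple, and their residues are not governed by a single neighbouring fixed point. So neither (a) nor (b) for $I_\gt$ can be obtained as you describe.
\item Even for Grassmannians, where the lifts themselves are GKM points, root-direction poles are not cancelled termwise by $\Hyp$. In the $H_i-H_j$ direction, $\Hyp_k$ has a single simple zero, at $z=-(H_i-H_j)/(k_i-k_j)$. The $q^k$-coefficient $\iota^*_{\tilde p}I_k$, by contrast, has poles at $(H_j-H_i)|_{\tilde p}/a$ for several values of $a$.
\item The cancellation therefore has to pair $k$ with the $k'$ defined by $k'-a\epsilon_i=s_{ij}(k-a\epsilon_i)$, where $\epsilon_i$ is the $i$-th unit vector. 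It uses $\W$-invariance of $I$ at the non-lifted fixed point reached along the root direction, which $s_{ij}$ fixes, together with a comparison of recursion coefficients. None of this is in the proposal.
\item Your alternative (``$\tilde p$ and its reflection cancel'') does not apply, since $\iota_p^*I_\gt$ is computed at a single lift.
\item \cite{CFKS} does not supply such a fixed-point computation for arbitrary abelian points. What it gives is the $B=\pt$ correspondence itself.
\end{itemize}

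The paper never analyzes an arbitrary abelian point. It applies Theorem~\ref{thm:characterization} only on the nonabelian side, and only to the explicit function $\I=e^{D/z}J_B\,I^{\bbS}_{\FlT}$ of \eqref{eqn:I^S}. Each coefficient of its expansion in the $s$-variables factors as a $\nu$-independent derivative of $J_B$ times the $\gt$-modification of $P_j(z\partial_{t_\bullet^\bullet})P_{j'}(z\partial_{t_\bullet^\bullet})I^{\bbS}_{\FlT}$. Hence (C1)--(C2) reduce to the case $B=\pt$ (Lemma~\ref{lemma:ab/nonab for flag}, resting on \cite[Theorem 4.1.1]{CFKS}) together with the $B$-independence of the edge factors. (C3) is read off from an explicit formula. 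This yields Proposition~\ref{prop:correspondence_J-function_product}.

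The passage to an arbitrary $I$ is then formal. The Divisor Equation and $\W$-invariance give \eqref{eqn:z_partial_omega_I}, which expresses $I_\gt$ as $(z\partial_\omega I)\cup\omega^{-1}$ after $q_i^{(m)}\to(-1)^{r_m-1}q^{(m)}$. Birkhoff factorization and overruledness then finish the proof. The same identity shows that $\Hyp$ is just the polynomial operator $z\partial_\omega$ divided by $\omega$. So the condition you single out as the main obstacle, (c), is not where the difficulty lies. The substance is in (a)--(b), which the paper imports from the $B=\pt$ theorem.
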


The above formulation (for general GIT quotients) is due to Coates-Lutz-Shafi, see \cite[Conjecture 1.8]{CLS}, where it is called a reformulation of \cite[Conjecture 3.7.1]{CFKS}

We will prove Theorem \ref{thm:abelian/nonabelian_correspondence} in \S~\ref{subsection:ab/nonab}.
The argument relies on a characterization theorem for $\cL_{B\times\Fl,\bbS}$ (Theorem~\ref{thm:characterization}), which is used to reduce to the case when $B$ is a point. In that case, the statement follows the abelian/nonabelian correspondence for partial flag varieties, which was proved in \cite[Theorem 4.1.1]{CFKS}.

In the proof of Lemma \ref{lem:twisted_function}, we need the abelian/nonabelian correspondence for twisted theories \cite[Conjecture 6.1.1]{CFKS}, \cite[Conjecture 1.9]{CLS}, which is in fact equivalent to the one for untwisted theory. 
More precisely, by the same argument as in the proofs of \cite[Theorem 6.1.2]{CFKS} and \cite[Proposition 1.7]{CLS}, Theorem~\ref{thm:abelian/nonabelian_correspondence} implies the following.

\begin{corollary}\label{cor:twisted_abelian_nonabelian_correspondence}
    Let $E$ be a $\G$-bundle over $\widetilde{V}$, and let $E_\G$ (resp. $E_\T$) be the induced vector bundle over $B\times\FlG$ (resp. $B\times\FlT$).
    Let $z\cdot I(t^\bullet_\bullet,x,z) \in \cL_{B\times\FlT,(E_\T,e_\mu)}$ be a point satisfying the Divisor Equation whose specialization $I(t^\bullet,x,z)$ is $\W$-invariant.
    Then, $z \cdot (I(t^\bullet,x,z))_\gt$ lies in $\cL_{B\times\Fl,(E_\G,e_\mu)}$.
\end{corollary}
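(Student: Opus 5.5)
We deduce Corollary~\ref{cor:twisted_abelian_nonabelian_correspondence} from Theorem~\ref{thm:abelian/nonabelian_correspondence}, following the proofs of \cite[Theorem 6.1.2]{CFKS} and \cite[Proposition 1.7]{CLS}. The idea is to pass between twisted and untwisted cones with the quantum Riemann--Roch operators of Theorem~\ref{thm:QRR}, and to check that these operators are compatible with the $\gt$-modification.

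\textbf{Step 1: reduce to an equivariant untwisted point.} Replace the $e_\mu$-twist by the $e_{\mu}^{-1}$-type description of Theorem~\ref{thm:QRR}, or directly use the version of \cite[Corollary 4]{CG:quantum} for $e_\mu$. This gives $\cL_{B\times\FlT,(E_\T,e_\mu)}=\Delta_{E_\T}\cL_{B\times\FlT,\bbS}$ and $\cL_{B\times\FlG,(E_\G,e_\mu)}=\Delta_{E_\G}\cL_{B\times\FlG,\bbS}$. Here $\Delta$ is the exponential operator in $\mathrm{ch}(E)$, $z$ and $\mu$, combined with the Novikov rescaling $Q^d\mapsto Q^d\mu^{\pm d\cdot c_1(E)}$.

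\textbf{Step 2: untwist the given point.} Write $zI=\Delta_{E_\T}(zI')$. We cannot simply check that $\Delta$ commutes with $(\cdot)_\gt$: $\Delta_{E_\T}$ is multiplication by a class depending on the Chern roots of $E_\T$, and these classes do not commute with the $q$-degree-dependent factor $\Hyp^{\gt}$ in a naive sense. Instead we use the standard trick with the Divisor Equation. Because $zI$ satisfies \eqref{eqn:Divisor_Equation}, the point $zI'$ also satisfies it, at least after replacing $\Delta$ by its hypergeometric form. Concretely, $\Delta_{E_\T}$ acts on a point satisfying the Divisor Equation as the operator obtained by substituting $H_i^{(m)}\mapsto H_i^{(m)}+z q_i^{(m)}\partial_{q_i^{(m)}}$ in the Chern roots of $E_\T$ (equivalently, $z\partial_{t_i^{(m)}}$).

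\textbf{Step 3: compare the two sides.} As in \cite{CFKS}, one shows that
\[
z\bigl(\Delta_{E_\T}(I')\bigr)_\gt \quad\text{and}\quad \Delta_{E_\G}\bigl(z (I')_\gt\bigr)
\]
differ only by the action of a differential operator $e^{D/z}$ in $z\partial_{t^{(m)}}$, $z\partial_{\tau}$ and $z\partial_x$, possibly after a change of variables. This works because $E$ is a $\G$-bundle: the characteristic classes of $E_\T$ expressed through $H_\bullet^\bullet$ are $\W$-symmetric, so they descend through $\varrho$ to those of $E_\G$. Under the operator substitution, these symmetric expressions in $z\partial_{t_i^{(m)}}$ become operators in $z\partial_{t^{(m)}}$ after the specialization $t_i^{(m)}=t^{(m)}$. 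Lemma~\ref{lem:differential_operator_preserves_cone} then shows that these operators preserve the cone.

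\textbf{Step 4: conclude.} Apply Theorem~\ref{thm:abelian/nonabelian_correspondence} to $zI'$, which satisfies the Divisor Equation and has $\W$-invariant specialization. This gives $z(I')_\gt\in\cL_{B\times\FlG,\bbS}$. Applying $\Delta_{E_\G}$ and the differential operator from Step 3 then yields $z(I)_\gt\in\cL_{B\times\FlG,(E_\G,e_\mu)}$.

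The main obstacle is Step 3. The commutation of the quantum Riemann--Roch operator with the $q$-specialization and with $\Hyp^{\gt}$ must be justified carefully, including the dependence of the Novikov rescaling on $c_1(E)$, which is compatible because $c_1(E_\T)$ is $\W$-invariant. Here we would follow \cite[Proposition 1.7]{CLS} closely.
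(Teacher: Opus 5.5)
Your overall route (untwist with quantum Riemann--Roch, apply Theorem~\ref{thm:abelian/nonabelian_correspondence}, retwist) is the one the paper has in mind; it simply defers to the arguments of \cite[Theorem 6.1.2]{CFKS} and \cite[Proposition 1.7]{CLS}. However, your Step~3, which you call the crux, rests on a misdiagnosis and on a mechanism that fails.

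First, the Coates--Givental operator $\Delta_{E_\T}$ is multiplication by the class $\exp\bigl(\sum_{l,m}s_{l+m-1}(\mu)\tfrac{B_m}{m!}\mathrm{ch}_l(E_\T)z^{m-1}\bigr)$, which involves neither $q$ nor $t$, composed with a Novikov rescaling. It does not act, on points satisfying \eqref{eqn:Divisor_Equation}, as the substitution $H_i^{(m)}\mapsto H_i^{(m)}+zq_i^{(m)}\partial_{q_i^{(m)}}$. Indeed, $f(z\partial_{t_\bullet^\bullet})$ multiplies the $q^{k}$-coefficient by $f(H+kz)$, not by $f(H)$.

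Second, a $\W$-symmetric polynomial in the $z\partial_{t_i^{(m)}}$ does not in general descend to an operator in the $z\partial_{t^{(m)}}$ after setting $t_i^{(m)}=t^{(m)}$. Only polynomials in $\sum_i\partial_{t_i^{(m)}}$ do. But $\mathrm{ch}_2(E_\T)$ typically involves $\sum_i(H_i^{(m)})^2$, which on $B\times\FlG$ is $c_1(S_m)^2-2c_2(S_m)$ and is not a polynomial in $H^{(m)}$. This is why the paper needs the extra parameters $s^{i,j}$ and $\tau^{a,b,c}$, with $P_j(z\partial_{t_\bullet^\bullet})$ and $g_c(z\partial_{t_\bullet^\bullet})$, elsewhere. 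So Step~3 as you describe it cannot be carried out.

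The missing observation is that no differential operator is needed: $\Delta$ commutes with $(\cdot)_\gt$ exactly.
\begin{itemize}
\item Multiplication by the $\W$-invariant class $\Delta(\mathrm{ch}(E_\T),z)$ commutes with multiplication by $\Hyp^{\gt}_{k_\bullet^\bullet}$, since both are multiplications in a commutative ring. It also commutes with the specialization $q_i^{(m)}\mapsto q^{(m)}$. So your claim that these do not commute ``in a naive sense'' is mistaken.
\item The identification $H^*_\bbS(B\times\FlG)\cong H^*_\bbS(B\times\FlT)^{-\W}$, $\beta\mapsto\widetilde\beta\cup\omega$, is linear over $H^*_\bbS(B\times\FlT)^\W$ acting through $\varrho$. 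Moreover $\varrho(\mathrm{ch}(E_\T))=\mathrm{ch}(E_\G)$.
\item For the Novikov rescaling: $c_1(E_\T)$ is $\W$-invariant, so it equals $c_B+\sum_m a_mH^{(m)}$ in the splitting \eqref{eqn:canonical_splitting}. Hence $d_i^{(m)}\cdot c_1(E_\T)=a_m=d^{(m)}\cdot c_1(E_\G)$, independently of $i$.
\end{itemize}
Therefore $(\Delta_{E_\T}I')_\gt=\Delta_{E_\G}(I')_\gt$ on the nose.

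The same facts are the correct justification for your Step~2. The operator $\Delta_{E_\T}^{-1}$ is a $q,t$-independent $\W$-invariant multiplication composed with a rescaling that is uniform in $i$ and commutes with $q_i^{(m)}\partial_{q_i^{(m)}}$. So $I'=\Delta_{E_\T}^{-1}I$ satisfies the Divisor Equation and has $\W$-invariant specialization. With these replacements, your Steps~1 and~4 complete the proof.
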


\subsection{Characterization of points on $\cL_{B\times\FlG,\bbS}$}
We introduce the Givental and Brown style characterization of points on $\cL_{B\times\FlG,\bbS}$.
See \cite{Brown,CCIT:mirror,JTY,FL,Koto:mirror} for similar characterizations (for other varieties/stacks).

We prepare notation.
\begin{itemize}
    \item 
    Let $F$ denote the set of all $\bbS$-fixed points of $\FlG$.
    \item 
    For $\alpha\in F$, we define $\adj(\alpha)$ to be the subset of $F\setminus\{\alpha\}$ consisting of all $\bbS$-fixed points that can be joined with $\alpha$ by a $1$-dimensional orbit.
    Such an orbit is unique for each $\beta\in\adj(\alpha)$, and we write $d_{\alpha,\beta}$ for the curve class of its closure.
    \item 
    For any $\beta\in\adj(\alpha)$, let $\rho_{\alpha,\beta}\in H^2_\bbS(\pt,\Z)$ denote the $\bbS$-equivariant first Chern class of the conormal bundle of $\alpha\in\FlG$ in the closure of the one-dimensional orbit joining $\alpha$ and $\beta$.
    \item 
    For each $\alpha\in F$, write $\iota_\alpha\colon B\hookrightarrow B\times\FlG$ for the embedding that intersects with each fiber of $B\times\FlG\to B$ at $\alpha\in\FlG$.
\end{itemize}

The following gives the characterization of points on $\cL_{B\times\FlG,\bbS}$.

\begin{theorem}\label{thm:characterization}
    Let 
    \[
    I(x,z)\in H^*_\bbS(B\times\FlG)(\!(z^{-1})\!)[\![q^\bullet,Q_B,x]\!]
    \]
    be a function such that $I|_{q^\bullet=Q_B=x=0}=1$.
    Then, the function $z\cdot I(x,z)$ is a $H^*_\bbS(\pt)_\loc[\![q^\bullet,Q_B,x]\!]$-valued point of $\cL_{B\times\FlG,\bbS}$ if and only if $I(x,z)$ satisfies the following three properties:
    \begin{itemize}
        \item[$\mathbf{(C1)}$]
        For any $\alpha\in F$, the restriction $\iota_\alpha^*I$ belongs to
        \[
         H^*_{\bbS}(B)_\loc (z) [\![q^\bullet,Q_B,x]\!],
        \]
        and it is a regular function in $z$ except possibly for poles at
        \[
        \{0,\infty\} \cup \left\{ \frac{\rho_{\alpha,\beta}}{a} \colon \beta\in\adj(\alpha), a\in\N \right\}.
        \]
        
        \item[$\mathbf{(C2)}$]
        The restrictions of $I(x,z)$ satisfy the following recursion relations: for any $\alpha\in F$, $\beta\in\adj(\alpha)$ and $a\in\N$, the following equality holds.
        \[
        \Prin_{z=\frac{\rho_{\alpha,\beta}}{a}} \iota_\alpha^*I = \frac{q^{a\cdot d_{\alpha,\beta}}}{z-\frac{\rho_{\alpha,\beta}}{a}} \cdot \frac{e_\bbS(N_\alpha)}{e_\bbS(N^\vir_{\alpha,\beta,a})} \cdot \iota_\beta^*I\left( z=\frac{\rho_{\alpha,\beta}}{a} \right)
        \]
        where $N_\alpha$ is the normal bundle to the $\bbS$-fixed point $\alpha$ in $\FlG$, and $N^\vir_{\alpha,\beta,a}$ is the virtual normal bundle to $\ovcM_{\alpha,\beta,a}$, the moduli of all multiple one-dimensional orbits joining $B\times\{\alpha\}$ and $B\times\{\beta\}$ \cite[Definition 3.10]{Koto:mirror}, in $\ovcM_{0,2}(B\times\FlG,a\cdot d_{\alpha,\beta})$.
        
        \item[$\mathbf{(C3)}$]
        For any $\alpha\in F$, the Laurent expansion of $\iota_\alpha^*I(x,z)$ at $z=0$ is a $\C(\nu)[\![q^\bullet,Q_B,x,\{(\rho_{\alpha,\beta})^{-1}\}_{\beta\in\adj(\alpha)}]\!]$-valued point of $\cL_B$.
    \end{itemize}
\end{theorem}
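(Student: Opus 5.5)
We prove Theorem~\ref{thm:characterization} by virtual localization with respect to $\bbS$, following \cite{Brown,CCIT:mirror,Koto:mirror}. The $\bbS$-fixed loci of $\ovcM_{0,n}(B\times\FlG,d)$ are indexed by decorated graphs. Each vertex carries a fixed point $\alpha\in F$ together with a (possibly contracted) stable map to $B\times\{\alpha\}\cong B$. Each edge carries a $k$-fold cover of a one-dimensional orbit closure, times a constant map to $B$. Since $\bbS$ acts trivially on $B$, the vertex contributions are genuine (non-equivariant) Gromov--Witten invariants of $B$, twisted by the normal bundle $N_\alpha$ and by the edge-smoothing factors.

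For the ``only if'' direction, take a point $z\cdot I=\bone z+\bt(z)+\sum\ldots$ as in the definition, and compute $\iota_\alpha^*I$ by localizing the correlator with the insertion $\phi^i/(z-\psi)$. We split the fixed loci according to the graph component containing the last marking. If that marking lies on a vertex over $\alpha$ (including the degenerate case of a bare marked point), then $\psi$ restricts to the $\psi$-class of a stable map to $B$, possibly corrected by smoothing terms $\psi_{\mathrm{node}}$. These terms assemble into a point of $\cL_B$, expanded at $z=0$; its input $\bt$ is modified by the edge contributions, whose denominators produce the $(\rho_{\alpha,\beta})^{-1}$ expansion. This gives (C3). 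If instead the marking lies at the end of an edge of degree $a$ leading to $\beta$, then $\psi$ restricts to $\rho_{\alpha,\beta}/a$ (up to the sign conventions), producing the simple pole $1/(z-\rho_{\alpha,\beta}/a)$. The remaining graph, attached at $\beta$, reassembles into $\iota_\beta^*I$ evaluated at $z=\rho_{\alpha,\beta}/a$. The edge moduli $\ovcM_{\alpha,\beta,a}$ and their virtual normal bundle give the factor $e_\bbS(N_\alpha)/e_\bbS(N^\vir_{\alpha,\beta,a})$. This yields (C2), and (C1) follows because these are the only possible poles apart from $0$ and $\infty$. Throughout I follow \cite[Section 3]{Koto:mirror} essentially verbatim, replacing the toric fibre by $\FlG$. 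The needed inputs are that $\FlG$ has isolated fixed points and isolated one-dimensional orbits (Notation~\ref{notation:S-action}), and that $N^\vir_{\alpha,\beta,a}$ is computed as in the toric case, because the edge moduli are products of $B$-moduli with rigid covers.

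For the ``if'' direction, we use the uniqueness argument. Given $I$ satisfying (C1)--(C3) with $I|_{q=Q_B=x=0}=1$, we construct a point $z\cdot I'$ of $\cL_{B\times\FlG,\bbS}$ whose $\cH_+$-projection equals that of $z\cdot I$. We then show $I=I'$ by induction on the degree in $(q^\bullet,Q_B,x)$. By (C2) and (C3), each restriction $\iota_\alpha^*I$ is determined by three pieces of data: its principal parts at the poles $\rho_{\alpha,\beta}/a$, which by (C2) are determined by lower-order terms of $\iota_\beta^*I$ (since $q^{a d_{\alpha,\beta}}$ has positive degree); its regular part, which is fixed by the $\cH_+$-part together with the point of $\cL_B$ from (C3); and its part at $z=0$. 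For the last, we use that a point of $\cL_B$ is determined by its $\cH_+$-projection. More precisely, we write $\iota_\alpha^*I=zM_B(\tau)f$ and argue that the negative-$z$ part at $0$ is determined modulo the inductive hypothesis. By the localization theorem, the restrictions $\iota_\alpha^*$ for $\alpha\in F$ determine the class in $H^*_\bbS(B\times\FlG)_\loc$, so $I=I'$. Since both $I$ and $I'$ satisfy the same recursion, the difference vanishes order by order.

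The main obstacle is the last step: making precise the decomposition of a rational function of $z$ with coefficients in $H^*(B)$ (nilpotent) into its parts at $\{0\}$, at $\{\rho_{\alpha,\beta}/a\}$, and at $\infty$. The issue is that the $\cH_+$-projection defined via $(\!(z^{-1})\!)$-expansion mixes these pieces. I would handle this as \cite[Proposition 3.13]{Koto:mirror} does. The idea is to work modulo the maximal ideal to successively higher orders, and to show that the ambiguity at each step is a point of $T\cL_B$ with vanishing $\cH_+$-projection, hence zero.
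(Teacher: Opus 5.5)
Your proposal is correct and follows essentially the same route as the paper. The paper proves the ``only if'' direction by virtual localization, with $\ovcM_{\alpha,\beta,a}\cong B\times[\C^\times/\C^\times]$ supplying the edge factor for (C1)--(C2), and it simply cites \cite[Proposition 4.2]{Iritani:blowups} for the vertex argument you sketch for (C3); the ``if'' direction compares $I$ with the point $I'$ on the cone having the same non-negative part, via the order-by-order uniqueness argument of \cite[Section 4.5]{Koto:mirror} that you describe. One small imprecision: the polynomial part of $\iota_\alpha^*(zI)$ is fixed by the $\cH_+$-projection at $z=\infty$ alone, and (C3) is needed only for the principal part at $z=0$.
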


\begin{remark}\label{rmk:characterization}\phantom{A}
    \begin{itemize}
        \item[$(1)$]
        The term $e_\bbS(N_\alpha)/e_\bbS(N^\vir_{\alpha,\beta,a}) \in H^*_\bbS(\pt)_\loc$ does not depend on the base space $B$.
        Though it is not necessary for this paper, it can be explicitly computed using the explicit formula for the small $J$-function of partial flag varieties from \cite{BCFK:ab/nonab}.
        \item[$(2)$] 
        One can easily generalize this characterization to the case of nontrivial, but still {\em split} partial flag bundles over $B$.
        In fact, the proof given below will work for any split partial flag bundle. 
        \item[$(3)$]
        Motivated by \cite{Givental:mirror,CFK:wall}, Oh gave a characterization \cite[Theorem 1.3]{Oh:split_flag_bundle} of points (satisfying some additional condition) on the equivariant Givental cone of split partial flag bundles.
        Theorem \ref{thm:characterization} and its version for general split bundles may be viewed as a slight extension of Oh's characterization. 
    \end{itemize}
\end{remark}

\begin{proof}[Proof of Theorem \ref{thm:characterization}]
    We first check that any point $I(x,z)$ on $z^{-1}\cL_{B\times\FlG,\bbS}$ satisfies the three properties.
    One can see that $\ovcM_{\alpha,\beta,a}$ is isomorphic to the $a$-th root stack $B\times[\C^\times_s/\C^\times_t]$ where $\C^\times_t$ acts on $\C^\times_s$ as $t\cdot s=t^as$; see e.g. \cite[Definition 3.10]{Koto:mirror}.
    In this situation, the virtual localization theorem \cite{GP} shows that $I(x,z)$ satisfies $\mathbf{(C1)}$ and $\mathbf{(C2)}$; see \cite{Brown,Oh:split_flag_bundle}.
    Finally, $\mathbf{(C3)}$ follows from \cite[Proposition 4.2]{Iritani:blowups}.

    Conversely, we assume that $I(x,z)$ satisfies $\mathbf{(C1)}$, $\mathbf{(C2)}$ and $\mathbf{(C3)}$.
    It follows from the argument in \cite[Section 4.5]{Koto:mirror} that the function $I(x,z)$ is uniquely determined from the non-negative part $z^{-1}\Prin_{z=\infty}z\cdot I$.
    On the other hand, as we checked above, the point $I'$ on $z^{-1}\cL_{B\times\FlG,\bbS}$ whose non-negative part coincides with that of $I$ satisfies the three properties.
    This shows that $I=I'$ is a point on $z^{-1}\cL_{B\times\FlG,\bbS}$.
\end{proof}

\subsection{$I$-function of trivial toric bundles}\label{subsec:Brown}
We review next a (relatively small) $I$-function of $B\times\FlT$.
Brown \cite{Brown} gives a formula for an $I$-function (i.e., a point on the Givental cone) of a \emph{split} toric bundle.
In particular, it applies to trivial toric bundles, and in that case it takes a simple form. 
Namely, define $I^\bbS_{B\times\FlT}(\tau,t_\bullet^\bullet,z)$ to be the product 
\begin{equation}\label{eqn:brown_function}
I^\bbS_{B\times\FlT}(\tau,t_\bullet^\bullet,z):=J_B(\tau,z)\cdot I_{\FlT}^\bbS(t_\bullet^\bullet,z),
\end{equation}
where $J_B(\tau,z)$ is the $J$-function \eqref{eqn:J-function} of $B$ and $I_{\FlT}^\bbS(t_\bullet^\bullet,z)$ is Givental's $\bbS$-equivariant small $I$-function of the toric variety $\FlT$ (see, e.g., \cite{Givental:mirror}). 
Explicitly,
\begin{align*}
    I_{\FlT}^\bbS &= \sum_{ \substack{k_i^{(m)}\geq0; \\1\leq m\leq l, 1\leq i\leq r_m} } \left[ \prod_{m=1}^l\prod_{i=1}^{r_m}(q_i^{(m)})^{k_i^{(m)}}e^{\left(k_i^{(m)}+\frac{H_i^{(m)}}{z}\right)t_i^{(m)}} \right] \cdot I_{k_\bullet^\bullet}^\bbS(H_\bullet^\bullet,z)  \\
    I_{k_\bullet^\bullet}^\bbS(H_\bullet^\bullet,z) &= \prod_{m=1}^{l}\prod_{i=1}^{r_m}\prod_{j=1}^{r_{m+1}}\frac{\prod_{c=-\infty}^{0}(H_i^{(m)}-H_j^{(m+1)}+cz)}{\prod_{c=-\infty}^{k_i^{(m)}-k_j^{(m+1)}}(H_i^{(m)}-H_j^{(m+1)}+cz)}
\end{align*}
Here we set $r_{l+1}:=N$, $H_j^{(l+1)}:=\nu_j$ and $k_j^{(l+1)}:=0$.
Then Brown's result says that $z\cdot I_{B\times\FlT}^\bbS$ lies in $\cL_{B\times\FlT,\bbS}$.
Further, the non-equivariant limit 
\begin{equation}
I_{B\times\FlT} := \lim_{\nu\to0} I^\bbS_{B\times\FlT}
\end{equation}
gives a point on $z^{-1}\cL_{B\times\FlT}$ as well. Note that the non-equivariant $I_{B\times\FlT}$ obviously exists: it is obtained simply by setting $H_j^{(l+1)}=0$ for all $1\leq j\leq r_{l+1}$.

By inspection, the specialization $[z\cdot I_{B\times\FlT}^\bbS]_{t_i^{(m)}\to t^{(m)}}$ is a $H^*_\bbS(\pt)[\![q_\bullet^\bullet]\!]^\W[\![Q_B,\tau,t^\bullet]\!]$-valued point of $\cL_{B\times\FlT,\bbS}^\W$.

\subsection{Abelian/Nonabelian correspondence}\label{subsection:ab/nonab}

This section is devoted to proving Theorem~\ref{thm:abelian/nonabelian_correspondence}.

We introduce notation used in the rest of this section. 
Fix a homogeneous $H^*_\bbS(\pt)$-basis $\{P_j(H_\bullet^\bullet)\}_{j\in J}$ of $H^*_\bbS(\FlT)^\W$ such that $P_0=1$ where $P_j$ ($j\in J$) is a $\W$-invariant polynomial.
Let $\{\sigma^j\}_{j\in J}$ denote the associated coordinates on $H^*_\bbS(\FlT)^\W$, and set $\sigma=\sum_{j\in J}\sigma^jP_j(H_\bullet^\bullet)$.
For the root system $\Phi$, choose a subset $\Phi_+$ of positive roots, and write $\omega:=\prod_{\alpha\in\Phi_+}c_1(L_\alpha)$.
Recall that there is the following short exact sequence
\[
0 \to \ker(\omega\cup-) \to H^*_\bbS(B\times\FlT)^\W \to H^*_\bbS(B\times\FlG) \to 0,
\]
and it induces the identification $H^*_\bbS(B\times\FlG) \cong H^*_\bbS(B\times\FlT)^{-W}$ that sends $\beta\in H^*_\bbS(B\times\FlG)$ to $\widetilde{\beta}\cup\omega$ where $\widetilde{\beta}\in H^*_\bbS(B\times\FlT)^\W$ is any lift of $\beta$; see \cite{Martin}.

\begin{notation}\label{cup omega}
    
For $\beta\in H^*_\bbS(B\times\FlG)$ and $\gamma \in H^*_\bbS(B\times\FlT)^{-W}$, the notation $\beta \cup \omega = \gamma$ means that $\beta$ corresponds to $\gamma$ via the identification $H^*_\bbS(B\times\FlG) \cong H^*_\bbS(B\times\FlT)^{-W}$.
We further abuse this notation by writing $\gamma\cup\omega^{-1} = \beta$ when convenient.

\end{notation}

We first consider the case where $B$ is a point.
The abelian/nonabelian correspondence in terms of formal $\bbS$-equivariant Frobenius manifolds \cite[Conjecture 3.7.1]{CFKS} was established for partial flag varieties \cite[Theorem 4.1.1]{CFKS}.
In particular, it yields the following.

\begin{proposition}\label{prop:correspondence_J-function}
    There exists $\epsilon(\sigma)\in H^*_\bbS(\Fl)[\![q^\bullet,\sigma]\!]$ such that the function $M_{\FlT,\bbS}(\sigma)\omega|_{q^{(m)}_i\to (-1)^{r_m-1} q^{(m)}}$ equals $M_{\FlG,\bbS}(\epsilon(\sigma))1$ via the canonical identification $H^*_\bbS(\FlT)^{-\W}\cong H^*_\bbS(\Fl)$. 
\end{proposition}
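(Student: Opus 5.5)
The plan is to read the statement off from \cite[Theorem 4.1.1]{CFKS}, which verifies \cite[Conjecture 3.7.1]{CFKS} for partial flag varieties, and then translate it into the notation of this paper. Throughout, $\sigma=\sum_j\sigma^jP_j(H^\bullet_\bullet)$ is $\W$-invariant.

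\emph{Step 1: the anti-invariant subspace is preserved.} Since $\sigma$ is $\W$-invariant, the $\W$-action commutes with $\star_\sigma$ on $H^*_\bbS(\FlT)$ once it also acts on the Novikov variables $q^\bullet_\bullet$. Hence $\W$ commutes with $M_{\FlT,\bbS}(\sigma)$, and $M_{\FlT,\bbS}(\sigma)$ maps $H^*_\bbS(\FlT)^{-\W}$ into its anti-invariant part. In particular $M_{\FlT,\bbS}(\sigma)\omega$ is $\W$-anti-invariant.

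\emph{Step 2: the specialization is well defined.} After the specialization $q^{(m)}_i\mapsto(-1)^{r_m-1}q^{(m)}$, the coefficient of each monomial in $q^\bullet$ is a finite sum of anti-invariant classes. By the short exact sequence recalled above and Notation~\ref{cup omega}, it therefore has a well-defined image $\gamma\cup\omega^{-1}$ in $H^*_\bbS(\Fl)$. Finiteness of these sums is the usual convergence statement in \cite[Lemma 4.1]{CLS}. So the left-hand side defines an element of $H^*_\bbS(\Fl)[z^{-1}][\![q^\bullet,\sigma]\!]$.

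\emph{Step 3: invoke the correspondence of Frobenius manifolds.} \cite[Theorem 4.1.1]{CFKS} provides a formal map $\sigma\mapsto\epsilon(\sigma)$ with $\epsilon|_{q=0}=\varrho(\sigma)$. It identifies the $\W$-anti-invariant part of the $\bbS$-equivariant quantum $D$-module of $\FlT$, restricted to the invariant locus and with $q^{(m)}_i\mapsto(-1)^{r_m-1}q^{(m)}$, with the pullback along $\epsilon$ of the quantum $D$-module of $\Fl$. The identification is via $\beta\mapsto\widetilde\beta\cup\omega$. At the level of fundamental solutions, which are characterized by \eqref{fundamental solution} together with the initial condition $e^{\tau/z}$, this reads
\[
\bigl(M_{\FlT,\bbS}(\sigma)(\widetilde\beta\cup\omega)\bigr)\big|_{q^{(m)}_i\to(-1)^{r_m-1}q^{(m)}} = \bigl(M_{\Fl,\bbS}(\epsilon(\sigma))\beta\bigr)\cup\omega .
\]
Taking $\beta=1$ and $\widetilde\beta=1$ gives the statement.

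\emph{Step 4: uniqueness check if the cited form differs.} If \cite{CFKS} is phrased instead through $J$-functions, namely $D_\omega J_{\FlT}$ equals $J_{\Fl}(\epsilon)\cup\omega$ with $D_\omega=\prod_{\alpha\in\Phi_+}z\partial_\alpha$, I would compare both sides as solutions of the same quantum differential system, pulled back by $\epsilon$. Both sides are then solutions with the same $q=0$ limit, $e^{\sigma/z}\omega=e^{\varrho(\sigma)/z}\omega$, so a uniqueness argument for solutions of the quantum connection with given initial condition forces them to agree.

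The main obstacle is Step 3, or Step 4 if needed: reconciling the formulation in \cite{CFKS}, including its sign convention $(-1)^{r_m-1}$ and any normalization of the mirror map, with the present statement about $M(\sigma)\omega$. In particular one must ensure that the operator $\cup\omega$ genuinely intertwines the two quantum connections. This intertwining, rather than a mere equality of $J$-functions, is the content of the Frobenius-manifold version of the correspondence, which is why I would cite that version rather than the $J$-function version.
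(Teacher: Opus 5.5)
\textbf{There is a gap, in Step 3: the extension to the full $\W$-invariant parameter space is missing.}

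Step 3 attributes to \cite[Theorem 4.1.1]{CFKS} exactly the part of the statement that needs proof. What the correspondence for flag varieties supplies is an identity along a \emph{slice} of the $\W$-invariant parameter space. (The paper uses an $\bbS$-equivariant analogue of \cite[Lemma 5.3.1]{CFKS}; you also pass over this equivariance point.) Concretely, fix a lift $\varphi_0\colon H^*_\bbS(\Fl)\to H^*_\bbS(\FlT)^\W$ of the canonical surjection. Then there is $\varphi(\epsilon)\in H^*_\bbS(\FlT)^\W[\![q^\bullet,\epsilon]\!]$ with $\varphi|_{q^\bullet=0}=\varphi_0$ and $M_{\FlT,\bbS}(\varphi(\epsilon))\omega=J_{\Fl,\bbS}(\epsilon)\cup\omega$ after the sign specialization.

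This slice has dimension $\dim H^*(\Fl)$, while $H^*_\bbS(\FlT)^\W$ is larger by the rank of $\ker(\omega\cup-)$. So the map you actually get runs the other way, $\epsilon\mapsto\varphi(\epsilon)$. It says nothing about $M_{\FlT,\bbS}(\sigma)\omega$ for $\sigma$ off the slice, and your proposal never deals with the directions in $\ker(\omega\cup-)$.

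Step 4 does not fill this gap, for two reasons.
\begin{itemize}
\item Comparing with ``the same system pulled back by $\epsilon$'' presupposes a map $\epsilon(\sigma)$ on all invariant $\sigma$. It also presupposes that $M_{\FlT,\bbS}(\sigma)\omega$ solves the pulled-back nonabelian system there. That is the content to be proved.
\item Agreement of the $q^\bullet=0$ limits cannot single out a flat section. If $c\equiv c'$ modulo $q^\bullet$, then $M(\sigma)c$ and $M(\sigma)c'$ are both flat with the same limit.
\end{itemize}

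The missing argument, which is the body of the paper's proof, extends the slice identity to all of $H^*_\bbS(\FlT)^\W$.
\begin{itemize}
\item First, $M_{\FlT,\bbS}(\sigma)\omega$ does not change when $\sigma$ moves in directions $\eta$ killed by $\omega\star$. By the defining property of the fundamental solution, $z\partial_\eta\bigl(M_{\FlT,\bbS}(\sigma)\omega\bigr)=M_{\FlT,\bbS}(\sigma)(\eta\star_\sigma\omega)=0$.
\item Second, the map $(\epsilon,\eta)\mapsto\varphi(\epsilon)+\eta$ reduces modulo $q^\bullet$ to the splitting $H^*_\bbS(\Fl)\oplus\ker(\omega\cup-)\cong H^*_\bbS(\FlT)^\W$ determined by $\varphi_0$. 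Hence it is invertible by Nakayama's lemma.
\item Writing $\sigma=\varphi(\epsilon)+\eta$ and setting $\epsilon(\sigma):=\epsilon$ then gives $M_{\FlT,\bbS}(\sigma)\omega=M_{\FlT,\bbS}(\varphi(\epsilon(\sigma)))\omega=J_{\Fl,\bbS}(\epsilon(\sigma))\cup\omega$.
\end{itemize}

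The kernel of $\xi\mapsto\xi\star_\sigma\omega$ depends on $\sigma$, so the first step is cleanest phrased as follows. This kernel is an integrable distribution: if $\xi,\zeta$ are tangent to it, then $[\xi,\zeta]$ annihilates $M_{\FlT,\bbS}(\sigma)\omega$, and invertibility of $M_{\FlT,\bbS}(\sigma)$ puts $[\xi,\zeta]$ back in the kernel. The function $M_{\FlT,\bbS}(\sigma)\omega$ is constant on its leaves, and the slice $\epsilon\mapsto\varphi(\epsilon)$ is transverse to them.

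Your Steps 1 and 2 are correct but only preliminary. Without this extension, the proposal at best yields the statement for $\sigma$ in the image of $\varphi$.
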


\begin{proof}
    Fix a lift $\varphi_0\colon H^*_\bbS(\FlG)\to H^*_\bbS(\FlT)^\W$ of the canonical morphism $H^*_\bbS(\FlT)^\W\to H^*_\bbS(\FlG)$.
    An $\bbS$-equivariant analogue of \cite[Lemma 5.3.1]{CFKS} shows that the equality 
    \begin{equation}\label{old CFKS}
      M_{\FlT,\bbS}(\varphi(\epsilon))\omega = J_{\FlG,\bbS}(\epsilon) \cup \omega 
    \end{equation}
    holds after the specialization $q^{(m)}_i\to (-1)^{r_m-1} q^{(m)}$, where $\epsilon$ denotes an $H^*_{\bbS}(\pt)$-coordinate system on $H^*_\bbS(\Fl)$, and $\varphi(\epsilon)\in H^*_\bbS(\FlT)^\W[\![q^\bullet,\epsilon]\!]$ with $\varphi|_{q^\bullet=0} = \varphi_0$. (In the right-hand side, the notation \ref{cup omega} was used.)

    Choose a $H^*_{\bbS}(\pt)[\![q^\bullet,\epsilon]\!]$-basis $(a_1,a_2,\dots)$ of the kernel of the quantum product 
    \[
    \omega\star^{\FlT}-\colon H^*_\bbS(\FlT)^\W[\![q^\bullet,\epsilon]\!] \to H^*_\bbS(\FlT)^{-\W}[\![q^\bullet,\epsilon]\!], 
    \]
    and denote the associated coordinate system by $\eta=(\eta^1,\eta^2,\dots)$.
    We claim that $M_{\FlT,\bbS}(\varphi(\epsilon)+\eta)\omega=M_{\FlT,\bbS}(\varphi(\epsilon))\omega$. Indeed, this follows from
    \[
    z\partial_{\eta^i}M_{\FlT,\bbS}(\varphi(\epsilon)+\eta)\omega = M_{\FlT,\bbS}(\varphi(\epsilon)) (a_i\star^{\FlT}\omega)=0,
    \]
    which holds for every $i$ by \eqref{fundamental solution}.
    Combining the claim with equation \eqref{old CFKS}, we conclude that
    \[
    [M_{\FlT,\bbS}(\varphi(\epsilon)+\eta)\omega]_{q^\bullet_\bullet\to(-1)^{r_\bullet-1}q^\bullet} = J_{\Fl,\bbS}(\epsilon)\cup\omega.   
    \]
    
    Consider the map
    \begin{equation}\label{eqn:splitting}
        H^*_\bbS(\FlG)[\![q^\bullet,\epsilon]\!] \oplus \ker(\omega\star^{\FlT}-) \to H^*_\bbS(\FlT)^\W[\![q^\bullet,\epsilon]\!], \quad (\epsilon,\eta)\mapsto \widetilde{\varphi}(\epsilon,\eta) := \varphi(\epsilon)+\eta.
    \end{equation}
    Modulo $q^\bullet$, it coincides with the splitting $H^*_\bbS(\FlG)\oplus\ker(\omega\cup-)\cong H^*_\bbS(\FlT)^\W$ induced from the lift $\varphi_0$, hence, by Nakayama's Lemma, it is an isomorphism.
    Via the splitting \eqref{eqn:splitting}, there is a unique map (over $\C[\![q^\bullet,\sigma]\!]$)
    \[
    H^*_\bbS(\FlT)^\W \to H^*_\bbS(\FlG), \qquad \sigma = (\sigma',\eta) \mapsto \epsilon(\sigma):=\widetilde{\varphi}^{-1}(\sigma',0).
    \]
    This gives the desired mirror map.
\end{proof}

\begin{corollary}\label{cor:CFKS}
    For any $f\in H^*_\bbS(\FlT)^{-\W}_\loc[z][\![q^\bullet,\sigma]\!]$ with $f|_{q=\sigma=0}=\omega$, the function $(z\cdot M_{\FlT,\bbS}(\sigma)f) \cup \omega^{-1}$ lies in $\cL_{\FlG,\bbS}$.
\end{corollary}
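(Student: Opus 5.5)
Fix $f\in H^*_\bbS(\FlT)^{-\W}_\loc[z][\![q^\bullet,\sigma]\!]$ with $f|_{q=\sigma=0}=\omega$. The plan is to rewrite $z\cdot M_{\FlT,\bbS}(\sigma)f$ as a differential operator in the parameters $\sigma$ applied to $M_{\FlT,\bbS}(\sigma)\omega$. Proposition~\ref{prop:correspondence_J-function} then converts that operator, after dividing by $\omega$, into an operator acting on $J_{\FlG,\bbS}(\epsilon(\sigma))$, and we conclude with Lemma~\ref{lem:differential_operator_preserves_cone}. Throughout, $\cup\omega^{-1}$ is understood together with the specialization $q^{(m)}_i\to(-1)^{r_m-1}q^{(m)}$, as in Proposition~\ref{prop:correspondence_J-function}.

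\textbf{Step 1: express $f$ through derivatives.} Every $\W$-anti-invariant class can be written as $P\cup\omega$ with $P$ $\W$-invariant. So write $f=\sum_j g_j(z,q,\sigma)\,P_j\cup\omega$ with $g_0|_{q=\sigma=0}=1$ and $g_j|_{q=\sigma=0}=0$ for $j\neq0$. Next, use the relation $z\partial_{\sigma^j}M_{\FlT,\bbS}(\sigma)=M_{\FlT,\bbS}(\sigma)(P_j\star_\sigma\cdot)$ from \eqref{fundamental solution}, and observe that $P_j\star_\sigma\omega=P_j\cup\omega+O(q,\sigma)$. By a triangular (Nakayama-type) inversion, we can then find a differential operator $D(z,q,\sigma,z\partial_\sigma)$, polynomial in $z$ and in the $z\partial_{\sigma^j}$, with $D|_{q=\sigma=0}=1$, such that
\[
M_{\FlT,\bbS}(\sigma)f=D\,M_{\FlT,\bbS}(\sigma)\omega .
\]
All $\W$-invariance is preserved in this construction, since the $P_j$ and the coordinates $\sigma^j$ are $\W$-invariant.

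\textbf{Step 2: transport through the correspondence.} The operator $D$ involves only $\sigma$, $z$ and $q$. Hence it commutes with the specialization of the Novikov variables and with $\cup\omega^{-1}$, which is $H^*_\bbS(\pt)[z][\![q,\sigma]\!]$-linear. Applying Proposition~\ref{prop:correspondence_J-function} gives
\[
(M_{\FlT,\bbS}(\sigma)f)\cup\omega^{-1}=D\,J_{\FlG,\bbS}(\epsilon(\sigma)).
\]
The function $zJ_{\FlG,\bbS}(\epsilon(\sigma))$ is a $\C[\![q,\sigma]\!]$-valued point of $\cL_{\FlG,\bbS}$, by the general fact that $J$ with any parameter substitution $\epsilon(\sigma)$ satisfying $\epsilon|_{q=\sigma=0}=0$ lies on the cone. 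Indeed, $\epsilon(0)|_{q=0}=\widetilde\varphi^{-1}(0,0)=0$.

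\textbf{Step 3: the operator preserves the cone (main obstacle).} It remains to show that $D$ preserves $\cL_{\FlG,\bbS}$, even though $D$ has the form $1+(\text{terms vanishing at }q=\sigma=0)$ rather than $e^{D'/z}$. I would write $D=e^{D'/z}$ formally with $D'=z\log D$. The difficulty is that $z\log D$ may contain negative powers of $z$, which are not allowed in Lemma~\ref{lem:differential_operator_preserves_cone}.

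Two routes are available, and I would try the second first.

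\emph{Route 1.} Use the overruled structure: $zT$ is tangent to $\cL$ and $zT\cap\cL=z\,T\cap\cL$ is preserved by $\cH_+$-valued multiplications. Then $z\partial_\sigma$-derivatives of a family of points remain in $zT_{J}$, so polynomial combinations of $z\partial_\sigma$ with coefficients in $H^*_\bbS(\pt)[z][\![q,\sigma]\!]$ applied to $zJ$ stay in the cone. This is the standard argument behind Lemma~\ref{lem:moving_points}.

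\emph{Route 2 (preferred).} Work with the fundamental solution directly. We have $DJ_{\FlG,\bbS}(\epsilon)=M_{\FlG,\bbS}(\epsilon)\,\tilde f$, where $\tilde f$ is obtained by replacing each $z\partial_{\sigma^j}$ with quantum multiplication by $\partial_{\sigma^j}\epsilon$ (plus $z$ times derivatives). Here $\tilde f\in H^*_\bbS(\FlG)[z][\![q,\sigma]\!]$ and $\tilde f|_{q=\sigma=0}=1$. Then $z\,M_{\FlG,\bbS}(\epsilon)\tilde f\in\cL_{\FlG,\bbS}$ by the description of points of the cone recalled in Section~\ref{subsubsec:equivariant_Givental_cone_qproj}.
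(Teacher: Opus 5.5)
Your proposal is correct and is the paper's argument. Like the paper, you write $M_{\FlT,\bbS}(\sigma)f$ as an $H^*_\bbS(\pt)_\loc[z][\![q^\bullet,\sigma]\!]$-linear combination of $z\partial_{\sigma}$-derivatives of $M_{\FlT,\bbS}(\sigma)\omega$ (with Novikov variables specialized, so that $\star_\sigma$ is $\W$-equivariant), transport this through Proposition~\ref{prop:correspondence_J-function} to derivatives of $J_{\FlG,\bbS}(\epsilon(\sigma))$, and conclude by overruledness, which is exactly your Route~1. Your Route~2 rests on the same overruledness, in the form $zM_{\FlG,\bbS}(\epsilon)\,\cH_+\subset\cL_{\FlG,\bbS}$; note that Section~\ref{subsubsec:equivariant_Givental_cone_qproj} only recalls the converse direction, and that you never need $D$ to preserve the whole cone, only this one family, so the $e^{D'/z}$ worry can be dropped.
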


\begin{proof}
    Since, for any $j\in J$,
    \[
    z\partial_{\sigma^j}M_{\FlT,\bbS}(\sigma)\omega = M_{\FlT,\bbS}(\sigma) (\omega \cup P_j(H^\bullet_\bullet) + O(q^\bullet)),
    \]
    the function $M_{\FlT,\bbS}(\sigma)f$ can be expressed as a linear combination of derivatives of $M_{\FlT,\bbS}(\sigma)\omega$.
    Hence, $(M_{\FlT,\bbS}(\sigma)f)\cup\omega^{-1}$ can be expressed as a linear combination of derivatives of $(M_{\FlT,\bbS}(\sigma)\omega)\cup\omega^{-1}=J_{\Fl,\bbS}(\epsilon(\sigma))$.
    Since $\cL_{\Fl,\bbS}$ is overruled, it implies that $(z\cdot M_{\FlT,\bbS}(\sigma)f) \cup \omega^{-1}$ lies in $\cL_{\FlG,\bbS}$.
\end{proof}

\begin{lemma} \label{lemma:ab/nonab for flag}
Theorem~\ref{thm:abelian/nonabelian_correspondence} holds in the case when $B$ is a point. 
\end{lemma}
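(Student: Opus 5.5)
We need to prove: for $B=\pt$, if $z\cdot I(t^\bullet_\bullet,x,z)\in\cL_{\FlT,\bbS}$ satisfies the Divisor Equation and its specialization $I(t^\bullet,x,z)$ is $\W$-invariant, then $z\cdot(I(t^\bullet,x,z))_\gt\in\cL_{\FlG,\bbS}$. The plan is to move the $\W$-invariant point into the form $M_{\FlT,\bbS}(\sigma)f$, convert the $\gt$-modification into multiplication by $\omega$ together with a sign change of Novikov variables, and then apply Corollary~\ref{cor:CFKS}.

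\textbf{Step 1.} Since $I(t^\bullet,x,z)$ is $\W$-invariant and lies on the cone, the discussion in subsection~\ref{subsec:Weyl_invariant_cone_abelian_quotient} lets us write
\[
I(t^\bullet,x,z)=M_{\FlT,\bbS}(\sigma(t,x))f_0
\]
with $\sigma$ and $f_0$ both $\W$-invariant. Here $\sigma|_{0}=0$ and $f_0|_{0}=1$.

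\textbf{Step 2.} We show that $\gt$-modification corresponds to $\omega$-multiplication at the level of the abelian cone. Following \cite{CLS}, consider the differential operator
\[
D_\omega:=\prod_{\alpha\in\Phi_+}\Bigl(z\partial_{t_i^{(m)}}-z\partial_{t_j^{(m)}}\Bigr),
\]
where $\alpha=H_i^{(m)}-H_j^{(m)}$. Apply $D_\omega$ to the unspecialized $I(t^\bullet_\bullet,x,z)$ and only then specialize $t_i^{(m)}\to t^{(m)}$.

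By the Divisor Equation, each factor acts on the $q$-coefficient of multidegree $k^\bullet_\bullet$ as multiplication by
\[
H_i^{(m)}-H_j^{(m)}+(k_i^{(m)}-k_j^{(m)})z .
\]
On the other hand, because $\cL$ is overruled, $z\partial_{t}$ applied to $M(\sigma)f$ stays of the form $M(\sigma)(\cdot)$. Hence $[D_\omega I]_{t_\bullet\to t}=M_{\FlT,\bbS}(\sigma)f$ for some $f$ with $f|_{0}=\omega$. Since $D_\omega$ is anti-invariant and $I$ is invariant after specialization, $f$ is $\W$-anti-invariant.

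\textbf{Step 3.} Apply Corollary~\ref{cor:CFKS}, keeping track of the sign $q_i^{(m)}\to(-1)^{r_m-1}q^{(m)}$ from Proposition~\ref{prop:correspondence_J-function}. This gives
\[
\bigl(z\cdot [D_\omega I]_{t_\bullet\to t}\bigr)\big|_{q_i^{(m)}\to(-1)^{r_m-1}q^{(m)}}\cup\omega^{-1}\in\cL_{\FlG,\bbS}.
\]
We then check by a direct computation that this element equals $z\cdot I_\gt$. In the product over negative roots, each factor $\prod_{c=-\infty}^{k_i-k_j}/\prod_{c=-\infty}^{0}$ from \eqref{eqn:hypergeometric}, paired with the factor for the opposite root, equals
\[
(-1)^{k_i-k_j}\,\frac{H_i-H_j+(k_i-k_j)z}{H_i-H_j}.
\]
Taking the product over $i<j$, the signs give $(-1)^{(r_m-1)k^{(m)}}$, which cancels the Novikov sign. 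The remaining factor $\prod_{\alpha\in\Phi_+}(\alpha+\langle\alpha,k\rangle z)/\omega$ is exactly the effect of $D_\omega$ followed by $\cup\,\omega^{-1}$.

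The main obstacle is this last identification. Division by $\omega$ is only meaningful through the identification $H^*_\bbS(\FlG)\cong H^*_\bbS(\FlT)^{-\W}$. So we must verify that the anti-invariant class $D_\omega I$ really corresponds to $I_\gt$ under Notation~\ref{cup omega}, which amounts to checking $I_\gt\cup\omega=[D_\omega I]$ as elements of $H^*_\bbS(\FlT)^{-\W}$. This equality is a termwise polynomial identity, so the verification reduces to the sign bookkeeping described in Step 3.
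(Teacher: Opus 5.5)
Your proposal is correct and follows essentially the same route as the paper. You factor $I=M_{\FlT,\bbS}(\sigma)f$, use the Divisor Equation with $\W$-invariance to rewrite $I_\gt$ as $(z\partial_\omega I)$ evaluated at $q^{(m)}_i\to(-1)^{r_m-1}q^{(m)}$ and cupped with $\omega^{-1}$. Your pairing of opposite roots, giving $(-1)^{k_i-k_j}(H_i-H_j+(k_i-k_j)z)/(H_i-H_j)$ and total sign $(-1)^{(r_m-1)k^{(m)}}$, is exactly the verification of \eqref{eqn:z_partial_omega_I}, which the paper asserts without computation. You then push the derivatives through $M_{\FlT,\bbS}(\sigma)$ via the quantum connection to get an anti-invariant $g$ with $g|_0=\omega$, and conclude with Corollary~\ref{cor:CFKS}. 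The one point to tidy is that the Birkhoff factorization must be taken for the unspecialized $I(t^\bullet_\bullet)$ before differentiating in the individual $t^{(m)}_i$, and only then specialized; by uniqueness it restricts to the factorization in your Step 1.
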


\begin{proof} 
Let $I(q^\bullet_\bullet,t_\bullet^\bullet,x,z)$ be an $H^*_\bbS(\FlT)$-valued formal function with the following properties:
\begin{itemize}
    \item 
    $I(q^\bullet_\bullet,t_\bullet^\bullet,x,z)$ satisfies the Divisor Equation~\eqref{eqn:Divisor_Equation};
    \item 
    $z\cdot I(q^\bullet_\bullet,t_\bullet^\bullet,x,z)$ lies in $\cL_{\FlT,\bbS}$;
    \item 
    the specialization $I(q^\bullet_\bullet,t^\bullet_\bullet,x,z)|_{t_\bullet^\bullet\to t^\bullet}$ is $\W$-invariant.
\end{itemize}
The first and third properties imply that 
\begin{equation}\label{eqn:z_partial_omega_I}
    (I(q^\bullet_\bullet,t^\bullet,x,z))_\gt = (z\partial_\omega I) ((-1)^{r_\bullet-1}q^\bullet,t^\bullet,x,z) \cup \omega^{-1}
\end{equation}
where $z\partial_\omega:=\prod_{\alpha\in\Phi_+} z\partial_{c_1(L_\alpha)}$ and $\partial_{c_1(L_\alpha)}$ denotes the $\C$-linear combination of $\partial_{t_\bullet^\bullet}$ corresponding to $c_1(L_\alpha)$, i.e., the differential operator in $t_\bullet^\bullet$ satisfying 
\[
\partial_{c_1(L_\alpha)} \left( \sum_{m=1}^l \sum_{i=1}^{r_m} t^{(m)}_i H^{(m)}_i \right) = c_1(L_\alpha).
\]
Meanwhile, the second property of $I$ implies that there exist 
\begin{align*}
    \sigma(q^\bullet_\bullet,t^\bullet_\bullet,x) &\in H^*_\bbS(\FlT)_\loc[\![q^\bullet_\bullet,t^\bullet_\bullet,x]\!], \\
    f(q^\bullet_\bullet,t^\bullet_\bullet,x,z) &\in H^*_\bbS(\FlT)_\loc[z][\![q^\bullet_\bullet,t^\bullet_\bullet,x]\!]
\end{align*}
with $\sigma(0,0,0)=0$, $f(0,0,0,z)=1$ such that
\[
I(q^\bullet_\bullet,t_\bullet^\bullet,x,z) = M_{\FlT,\bbS}(\sigma(q^\bullet_\bullet,t^\bullet_\bullet,x)) f(q^\bullet_\bullet,t^\bullet_\bullet,x,z).
\]
The third property of $I$ shows that $\sigma((-1)^{r_\bullet-1}q^\bullet,t^\bullet,x)$ is $\W$-invariant and $f((-1)^{r_\bullet-1}q^\bullet,t^\bullet,x,z)$ is $\W$-invariant.
Since $z\partial_{t^{(m)}_i}\circ M_{\FlT,\bbS}(\sigma) = M_{\FlT,\bbS}(\sigma) \circ (\sigma^*\nabla)_{z\partial_{t^{(m)}_i}}$, the right-hand side of \eqref{eqn:z_partial_omega_I} equals
\[
(M_{\FlT,\bbS}(\sigma((-1)^{r_\bullet-1}q^\bullet,t^\bullet,x)) g((-1)^{r_\bullet-1}q^\bullet,t^\bullet,x,z))\cup\omega^{-1}
\]
for some $g((-1)^{r_\bullet-1}q^\bullet,t^\bullet,x,z)\in H^*_\bbS(\FlT)^{-W}_\loc[z][\![q^\bullet,t^\bullet,x]\!]$ satisfying
$g(0,0,0,z)=\omega$.
Finally, Corollary~\ref{cor:CFKS} yields that 
\[
(I(q^\bullet_\bullet,t^\bullet,x,z))_\gt=(M_{\FlT,\bbS}(\sigma((-1)^{r_\bullet-1}q^\bullet,t^\bullet,x)) g((-1)^{r_\bullet-1}q^\bullet,t^\bullet,x,z))\cup\omega^{-1}
\]
represents a point in $\cL_{\FlG,\bbS}$.
\end{proof}

\begin{remark} \label{rem:CFKS implies CLS}
    The arguments which deduce Lemma \ref{lemma:ab/nonab for flag} from Proposition~\ref{prop:correspondence_J-function} and its Corollary~\ref{cor:CFKS}, and deduce those, in turn, from \cite[Theorem 4.1.1]{CFKS} work for general GIT quotients. In other words, we spelled out how the Coates-Lutz-Shafi formulation \cite[Conjecture 1.8]{CLS} of the abelian/nonabelian correspondence follows from the original \cite[Conjecture 3.7.1]{CFKS}.
    Note that restricting to points in the Givental cone satisfying the Divisor Equation is used precisely to deduce equation \eqref{eqn:z_partial_omega_I}.
\end{remark}

We now remove the assumption on $B$ and prove Theorem~\ref{thm:abelian/nonabelian_correspondence}.
Choose a basis $\{\phi_i\}_{i\in I}$ of $H^*(B)$ such that $\phi_0=1$.
Let $\{\newsigma^{i,j}\}_{(i,j)\in I\times J}$ denote the coordinates on $H^*_\bbS(B\times\FlT)^\W$ associated with the basis $\{\phi_i\otimes P_j(H^\bullet_\bullet)\}_{(i,j)\in I\times J}$.
By Remark \ref{rem:CFKS implies CLS}, it suffices to show the following proposition, the extension of Proposition~\ref{prop:correspondence_J-function} to the case of an arbitrary product $B\times\Fl$.

\begin{proposition}\label{prop:correspondence_J-function_product}
    There exists $\varepsilon(\newsigma)\in H^*_\bbS(B\times\Fl)[\![q^\bullet,Q_B,\newsigma]\!]$ such that 
    \begin{equation}\label{eqn:correspondence_J-function_product}
        [M_{B\times\FlT,\bbS}(\newsigma)\omega]_{q^{(m)}_i\to(-1)^{r_m-1} q^{(m)}} = (M_{B\times\FlG,\bbS}(\varepsilon(\newsigma))1) \cup \omega.
    \end{equation} 
\end{proposition}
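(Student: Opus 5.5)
The plan is to bootstrap from the case $B=\pt$ (Proposition~\ref{prop:correspondence_J-function}) by means of the characterization Theorem~\ref{thm:characterization}. We also use the fact that the Gromov--Witten theory of a product is governed by the product of the $J$-functions, as in Brown's formula \eqref{eqn:brown_function}.

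\emph{Step 1: the special point.} Start from the point $z\cdot J_B(\tau,z)\cdot M_{\FlT,\bbS}(\sigma)\omega$, with $\sigma$ ranging over $H^*_\bbS(\FlT)^\W$. Since $\omega$ is a cohomology class pulled back from $\FlT$, this lies on $\cL_{B\times\FlT,\bbS}$: the fundamental solution of the product factorizes as $M_B\otimes M_{\FlT}$, which follows from the product formula for genus zero invariants of $B\times\FlT$ when $B$ carries the trivial $\bbS$-action. Apply the specialization $q_i^{(m)}\to(-1)^{r_m-1}q^{(m)}$ and then $\cup\,\omega^{-1}$. By Proposition~\ref{prop:correspondence_J-function}, the result equals $J_B(\tau,z)\cdot J_{\FlG,\bbS}(\epsilon(\sigma))$. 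This in turn equals $M_{B\times\FlG,\bbS}(\tau+\epsilon(\sigma))1$, again by the product factorization on the nonabelian side. So \eqref{eqn:correspondence_J-function_product} holds on the sublocus of parameters $\newsigma=\tau\otimes1+1\otimes\sigma$, i.e.\ $\newsigma^{i,j}=0$ unless $i=0$ or $j=0$.

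\emph{Step 2: extending to all $\newsigma$.} Both sides of \eqref{eqn:correspondence_J-function_product} are solutions of quantum differential systems, and any $\newsigma$-family is generated from the sublocus by differentiation. Concretely, $z\partial_{\newsigma^{i,j}}M_{B\times\FlT,\bbS}(\newsigma)\omega=M_{B\times\FlT,\bbS}(\newsigma)\bigl((\phi_i P_j)\star\omega\bigr)$. Arguing as in Corollary~\ref{cor:CFKS}, I would show that for arbitrary $\newsigma$ the left-hand side of \eqref{eqn:correspondence_J-function_product}, after $\cup\,\omega^{-1}$, is of the form $z^{-1}$ times a point of $\cL_{B\times\FlG,\bbS}$. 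To prove this, I would verify (C1)--(C3) of Theorem~\ref{thm:characterization} directly for $M_{B\times\FlT,\bbS}(\newsigma)\omega$ specialized and divided by $\omega$. The fixed-point restrictions $\iota_\alpha^*$ reduce to restrictions to $\W$-orbits of $\bbS$-fixed points in $\FlT$, where $\omega$ becomes an invertible equivariant constant.

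(C3) follows because at each fixed point the restriction is a point of $\cL_B$ with modified coefficient ring; this is \cite[Proposition 4.2]{Iritani:blowups} applied on $B\times\FlT$. (C1) and (C2) require comparing the pole structure and recursion on the abelian side, after $\W$-antisymmetrization and division by $\omega$, with the nonabelian recursion coefficients $e_\bbS(N_\alpha)/e_\bbS(N^\vir_{\alpha,\beta,a})$. These coefficients are independent of $B$ by Remark~\ref{rmk:characterization}(1). Hence the required identity can be checked on the $B=\pt$ case, where it holds because Proposition~\ref{prop:correspondence_J-function} produces genuine points of $\cL_{\FlG,\bbS}$. The mechanism is that the recursion coefficient is a constant in $H^*_\bbS(\pt)_\loc$ and the $B$-dependence enters only through $\iota_\beta^*I$ evaluated at the pole.

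Once this point $z\cdot I$ of $\cL_{B\times\FlG,\bbS}$ is obtained, the normalization $I|_{q=Q_B=\newsigma=0}=1$ and the fact that $I=1+O(z^{-1})$ force it to equal $M_{B\times\FlG,\bbS}(\varepsilon(\newsigma))1$. Here $\varepsilon(\newsigma)$ is read off from the $z^{-1}$-coefficient; this is the mirror map, and it restricts to $\tau+\epsilon(\sigma)$ on the sublocus of Step 1.

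The main obstacle is (C2). One must show that the residues of the abelian function at poles $\rho/a$ coming from non-$\W$-invariant abelian one-dimensional orbits (for example, those joining points in the same $\W$-orbit) cancel after antisymmetrization. One must also show that the surviving poles reproduce the nonabelian recursion with the correct sign $(-1)^{r_m-1}$. My first approach is to avoid computing this directly: the recursion is linear in $\iota_\beta^*I$ with $B$-independent coefficients, so it suffices to know it for $B=\pt$ with arbitrary coefficient ring. That case follows by applying Lemma~\ref{lemma:ab/nonab for flag} to $\FlT$ with coefficients extended by $H^*(B)[\![Q_B,\tau]\!]$.
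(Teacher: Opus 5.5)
\textbf{There is a genuine gap in Step 2.}

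Your Step 1 is fine. On the sublocus $\newsigma=\tau\otimes1+1\otimes\sigma$, the quantum K\"unneth factorization together with Proposition~\ref{prop:correspondence_J-function} gives \eqref{eqn:correspondence_J-function_product}.

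Step 2 rests on reducing (C1)--(C2) to Lemma~\ref{lemma:ab/nonab for flag} ``with coefficients extended by $H^*(B)[\![Q_B,\tau]\!]$''. That reduction does not hold.
\begin{itemize}
\item Lemma~\ref{lemma:ab/nonab for flag} applies only to genuine points of $\cL_{\FlT,\bbS}$ (over some coefficient ring $R$) that satisfy the Divisor Equation in the separate variables $t_i^{(m)}$.
\item For $\newsigma$ with mixed components $\phi_i\otimes P_j$ ($i\neq0\neq j$), $M_{B\times\FlT,\bbS}(\newsigma)\omega$ is a tangent vector of $\cL_{B\times\FlT,\bbS}$. It is not a point or tangent vector of $\cL_{\FlT,\bbS}(R)$ with $R=H^*(B)[\![Q_B,\ldots]\!]$.
\item This already fails on your sublocus. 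Multiplication by $J_B(\tau,z)$, which carries negative powers of $z$ and the quantum corrections of $B$, does not preserve $\cL_{\FlT,\bbS}(R)$. For instance, with a point in place of $\FlT$, $zJ_B\notin\cL_{\pt}(R)$: its $\cH_+$-part $z+\tau$ would force it to equal $ze^{\tau/z}$.
\end{itemize}
What your reduction really needs is that the abelian recursion for an \emph{arbitrary} anti-invariant function implies the nonabelian (C1)--(C2) after dividing by $\omega$. Lemma~\ref{lemma:ab/nonab for flag} does not give this, since it only concerns functions coming from the cone, which also satisfy (C3).

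Observing that the recursion coefficients are $B$-independent constants does not close the gap either.
\begin{itemize}
\item You would still have to extract the required identities from the $B=\pt$ case by a separate spanning argument.
\item You would have to handle the fact that the $\bbS$-action on $\FlT$ generally has non-isolated one-dimensional orbits. For example, in $\Fl(1,2;3)_{\operatorname{ab}}$ a tangent weight occurs twice at a fixed point, so the abelian recursion is not of the simple form you invoke.
\item ``Generating'' arbitrary $\newsigma$ from the sublocus by differentiation does not work. Taylor-expanding in the mixed coordinates writes $M(\newsigma)\omega=M(\newsigma_0)G$ with $G$ involving negative powers of $z$. So you leave the tangent spaces at sublocus points, which the argument of Corollary~\ref{cor:CFKS} needs you to stay in.
\end{itemize}

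The missing idea, which the paper uses, is not to test $M(\newsigma)\omega$ directly but to build a versal family whose Taylor coefficients factor. Take $\I=e^{D/z}J_B(\tau,z)I^\bbS_{\FlT}(t_\bullet^\bullet,z)$ with $D=\sum_{(i,j)\in K}s^{i,j}z\partial_{\tau^i}P_j(z\partial_{t_\bullet^\bullet})$.
\begin{itemize}
\item $\I$ lies on $\cL_{B\times\FlT,\bbS}$, satisfies the Divisor Equation, and its mirror map $\widetilde{\newsigma}$ covers every $\W$-invariant direction.
\item Each $s$-coefficient of $\I_\gt$ has the form $(\partial_\tau\cdots J_B)\cdot([P_j(z\partial_{t_\bullet^\bullet})\cdots I^\bbS_{\FlT}]_{t_i^{(m)}\to t^{(m)}})_\gt$.
\item The first factor is $\nu$-free, identical at all fixed points, and regular away from $z=0$.
\item The second factor is the $\gt$-modification of a tangent vector of $\cL_{\FlT,\bbS}$. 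This is exactly where Lemma~\ref{lemma:ab/nonab for flag} legitimately applies, and it yields (C1)--(C2).
\item (C3) is read off from the explicit expansion $\Delta\cdot J_B(\tau_\alpha,z)$.
\end{itemize}
One then recovers $M_{B\times\FlT,\bbS}(\newsigma)\omega$ from derivatives of $\I_\gt\cup\omega$ by inverting $\widetilde{\newsigma}$, and Birkhoff factorization gives $h=1$.
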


\begin{proof}
    Let $J'\subset J$ denote the subset which gives the basis $\{P_j(H_\bullet^\bullet)\}_{j\in J'}$ of $H^2_\bbS(\FlT)^\W$.
    Introduce the differential operator
    \[
    D(s,z\partial_\tau,z\partial_{t_\bullet^\bullet})=\sum_{(i,j)\in K} s^{i,j} z\partial_{\tau^i} \cdot P_j(z\partial_{t_\bullet^\bullet})
    \]
    where $K:= I\times J\setminus(I\times\{0\}\cup\{0\}\times J')$, and $\{s^{i,j}\}_{(i,j)\in K}$ are formal parameters.
    Starting with the explicit small $I$-function \eqref{eqn:brown_function}, we define the $H^*_\bbS(B\times\FlT)$-valued function
    \begin{equation}\label{eqn:I^S}
    \I(q^\bullet_\bullet,t^\bullet_\bullet,\tau,s,z) := e^{D(s,z\partial_\tau,z\partial_{t_\bullet^\bullet})/z}J_B(\tau,z)I_{\FlT}^\bbS(t_\bullet^\bullet,z).
    \end{equation}
    The function $z\cdot \I$ lies in $\cL_{B\times\FlT,\bbS}$ as the differential operator $\exp(D(s,z\partial_\tau,z\partial_{t_\bullet^\bullet})/z)$ preserves the Givental cone (Lemma \ref{lem:differential_operator_preserves_cone}).
    It is straightforward to see that $\I$ satisfies the Divisor Equation~\eqref{eqn:Divisor_Equation}. By Birkhoff factorization (see \cite{CG:quantum}),  there exist 
    \begin{align*}
        \widetilde{\newsigma}(q^\bullet_\bullet,Q_B,t^\bullet_\bullet,\tau,s) &\in H^*_\bbS(B\times\FlT)[\![q^\bullet_\bullet,Q_B,t^\bullet_\bullet,\tau,s]\!], \\
        f(q_\bullet^\bullet,Q_B,t^\bullet_\bullet,\tau,s,z) &\in H^*_\bbS(B\times\FlT)[z][\![q_\bullet^\bullet,Q_B,t^\bullet_\bullet,\tau,s]\!]
    \end{align*} 
    such that $\I(q^\bullet_\bullet,t^\bullet_\bullet,\tau,s,z) = M_{B\times\FlT,\bbS}(\widetilde{\newsigma})f$. Furthermore, one easily checks that $\widetilde{\newsigma}$ and $f$ satisfy
    \begin{align*}
        \widetilde{\newsigma}(0,0,t^\bullet_\bullet,\tau,s)\mid_{t^\bullet_\bullet\to t^\bullet} &= \sum_{j\in J'} t^j\cdot1\otimes P_j(H^\bullet_\bullet) + \sum_{i\in I}\tau^i\cdot\phi_i\otimes1 + \sum_{(i,j)\in K}s^{i,j}\cdot\phi_i\otimes P_j(H_\bullet^\bullet), \\
        f|_{q^\bullet_\bullet=Q_B=0} &= 1.
    \end{align*}
 We now make the following 
\vspace{10pt}
    
    {\bf Claim:} $z\I_\gt^\bbS:=z(\I(t^\bullet,\tau,s,z))_\gt$ lies in $\cL_{B\times\Fl,\bbS}$.

\vspace{10pt}
\noindent Granting the claim for a moment, the theorem follows. 
    Indeed, $\I_\gt^\bbS\cup\omega$ equals\footnote{Here and in the rest of the proof we use liberally Notation \ref{cup omega}}
    \begin{align*}
        &[z\partial_\omega M_{B\times\FlT,\bbS}(\widetilde{\newsigma})f]_{q^\bullet_\bullet\to (-1)^{r_\bullet-1}q^\bullet,t^\bullet_\bullet\to t^\bullet} \\
        =\ &\left[M_{B\times\FlT,\bbS}(\widetilde{\newsigma}) \left(\prod_{\alpha\in\Phi_+}\widetilde{\newsigma}^*\nabla_{z\partial_{c_1(L_\alpha)}}\right)f\right]_{q^\bullet_\bullet\to (-1)^{r_\bullet-1}q^\bullet,t^\bullet_\bullet\to t^\bullet} \\
        =\ &M'_{B\times\FlT,\bbS}(\widetilde{\newsigma}((-1)^{r_\bullet-1}q^\bullet,Q_B,t^\bullet,\tau,s)) g((-1)^{r_\bullet-1}q^\bullet,Q_B,t^\bullet,\tau,s,z)
    \end{align*}
    where $M'_{B\times\FlT,\bbS}:=[M_{B\times\FlT,\bbS}]_{q^\bullet_\bullet\to (-1)^{r_\bullet-1}q^\bullet}$, and $g$ is anti-$\W$-invariant with $g|_{q^\bullet=Q_B=0}=\omega$.
    A similar computation yields the following:
    \begin{align*}
        (z\partial_{\tau^i}\I_\gt^\bbS)\cup\omega &= M'_{B\times\FlT,\bbS}(\widetilde{\newsigma}) g_{i,0}, \quad g_{i,0}|_{q^\bullet=Q_B=0} = \phi_i\otimes\omega &(i\in I) \\
        (z\partial_{t^{j}}\I_\gt^\bbS)\cup\omega &= M'_{B\times\FlT,\bbS}(\widetilde{\newsigma}) g_{0,j}, \quad g_{0,j}|_{q^\bullet=Q_B=0} = \omega \cup P_j(H^\bullet_\bullet) &(j\in J') \\
        (z\partial_{s^{i,j}}\I_\gt^\bbS)\cup\omega &= M'_{B\times\FlT,\bbS}(\widetilde{\newsigma}) g_{i,j}, \quad g_{i,j}|_{q^\bullet=Q_B=0} = \phi_i\otimes(\omega \cup P_j(H^\bullet_\bullet)) &((i,j)\in K)
    \end{align*}
    The left-hand side of \eqref{eqn:correspondence_J-function_product}, $M'_{B\times\FlT,\bbS}(\newsigma)\omega$, can be expressed as a $\C[\![q^\bullet,Q_B,\newsigma]\!]$-linear combination of the derivatives of $\I^\bbS_\gt\cup\omega$ with the change of variable $\newsigma\mapsto(t(\newsigma), \tau(\newsigma), s(\newsigma))$ satisfying $\widetilde{\newsigma}(t(\newsigma),\tau(\newsigma),s(\newsigma))=\sum_{(i,j)\in I\times J} \newsigma^{i,j} \phi_i\otimes P_j(H_\bullet^\bullet)$.
    This shows that $z\I_\gt^\bbS\in\cL_{B\times\Fl,\bbS}$ implies $(z\cdot M'_{B\times\FlT,\bbS}(\newsigma)\omega)\cup\omega^{-1}\in\cL_{B\times\Fl,\bbS}$.
    By taking Birkhoff factorization, one obtains $\epsilon(\newsigma)\in H^*_\bbS(B\times\Fl)[\![q^\bullet,Q_B,\newsigma]\!]$ and $h\in H^*_\bbS(B\times\Fl)[z][\![q^\bullet,Q_B,\newsigma]\!]$ such that
    \[
    (M'_{B\times\FlT,\bbS}(\newsigma)\omega)\cup\omega^{-1} = M_{B\times\Fl,\bbS}(\epsilon(\newsigma))h.
    \]
    The left-hand side of the last equation is of the form $1+O(1/z)$. From this and $M_{B\times\Fl,\bbS}|_{z=\infty}=\operatorname{id}$, it follows that $h=1$. 
    This proves the identity \eqref{eqn:correspondence_J-function_product}.
    
    It remains to prove the Claim that $z\I_\gt^\bbS\in\cL_{B\times\Fl,\bbS}$. To do this, we check that $\I_\gt^\bbS$ satisfies the properties $\mathbf{(C1)}$, $\mathbf{(C2)}$, and $\mathbf{(C3)}$ introduced in Theorem~\ref{thm:characterization}.
    We set 
    \begin{equation*}
        s^{i,j} = 
        \begin{cases}
        \tau^i  & \text{if $j=0$}, \\
        t^{(j)} & \text{if $i=0$ and $j\in J'$},   \\
        s^{i,j} & \text{if $(i,j)\in K$}.
        \end{cases}
    \end{equation*}

    By a direct computation, the Laurent expansion of $\iota_\alpha^*(\I^\bbS_\gt)$ at $z=0$ is written as $\Delta(z\partial_\tau,s,z)\cdot J_B(\tau_\alpha,z)$ where 
    \[
    \tau_\alpha := \sum_{(i,j)\in I\times J} s^{i,j}\cdot\phi_i \cdot P_j(\iota_\alpha^*H_\bullet^\bullet)
    \]
    and $\Delta(z\partial_\tau,s,z) \in H^*_\bbS(B)_\loc[\![z,q^\bullet,z\partial_\tau,s]\!]$ is the following differential operator:
    \begin{multline*}
    \sum_{ \substack{k^{(m)}\geq0; \\1\leq m\leq l} } \left[ \prod_{m=1}^l (q^{(m)}e^{t^{(m)}})^{k^{(m)}} \right] \cdot \sum_{ \substack{k_i^{(m)}\geq0; 1\leq m\leq l, \\ 1\leq i\leq r_m,\sum_ik_i^{(m)}=k^{(m)}} } I^\bbS_{k_\bullet^\bullet}(\iota_\alpha^*H_\bullet^\bullet,z) \cdot W_{k_\bullet^\bullet}^\bbS(\iota_\alpha^*H_\bullet^\bullet,z) \\
    \cdot \exp\left( \frac{D(s,z\partial_\tau,\iota_\alpha^*H_\bullet^\bullet+k_\bullet^\bullet z)-D(s,z\partial_\tau,\iota_\alpha^*H_\bullet^\bullet)}{z} \right).
    \end{multline*}
    Since $\Delta|_{q^\bullet=s=0}=1$, it follows from Lemma~\ref{lem:differential_operator_preserves_cone} that $z\cdot\Delta(z\partial_\tau,s,z)\cdot J_B(\tau_\alpha,z)$ lies in $\cL_B$, i.e., $\I^\bbS_\gt$ satisfies $\mathbf{(C3)}$.

    Next, it is immediate to see that $\I^\bbS_\gt$ satisfies $\mathbf{(C1)}$ and $\mathbf{(C2)}$ if and only if the coefficients of its formal power series expansion in the $s$-variables satisfy $\mathbf{(C1)}$ and $\mathbf{(C2)}$.
    We only check that the coefficient of $s^{i,j}s^{i',j'}$ satisfies $\mathbf{(C1)}$ and $\mathbf{(C2)}$; the other cases follow similarly.
    From the construction, the coefficient of $s^{i,j}s^{i',j'}$ in $\I^\bbS_\gt$ is
    \begin{align*}
        & \left( \left[ \left( \partial_{\tau^i}\partial_{\tau^{i'}}J_B(\tau,z) \right) \cdot P_j(z\partial_{t_\bullet^\bullet}) P_{j'}(z\partial_{t_\bullet^\bullet}) I_{\FlT}^\bbS \right]_{t_i^{(m)}\to t^{(m)}} \right)_\gt  \\
        =&\ \left( \partial_{\tau^i}\partial_{\tau^{i'}}J_B(\tau,z) \right) \cdot \left( \left[ P_j(z\partial_{t_\bullet^\bullet}) P_{j'}(z\partial_{t_\bullet^\bullet}) I_{\FlT}^\bbS \right]_{t_i^{(m)}\to t^{(m)}} \right)_\gt.
    \end{align*}
    Here, $(\cdot)_\gt$ in the second line denotes the $\gt$-modification for $\FlG$.
    
    Since $P_j(z\partial_{t_\bullet^\bullet}) P_{j'}(z\partial_{t_\bullet^\bullet}) I_{\FlT}^\bbS$ lies in a tangent space of $\cL_{\FlT,\bbS}$, Lemma~\ref{lemma:ab/nonab for flag} implies that the second factor
    \[
    \left( \left[ P_j(z\partial_{t_\bullet^\bullet}) P_{j'}(z\partial_{t_\bullet^\bullet}) I_{\FlT}^\bbS \right]_{t_i^{(m)}\to t^{(m)}} \right)_\gt
    \]
    lies in a tangent space of $\cL_{\FlG,\bbS}$ as well. Therefore, by
    Theorem~\ref{thm:characterization}, it satisfies the conditions $\mathbf{(C1)}$ and $\mathbf{(C2)}$ for $\pt\times\FlG$.
    Note that the first factor $\partial_{\tau^i}\partial_{\tau^{i'}}J_B(\tau,z)$ does not contain the $\bbS$-equivariant parameters $\nu_j$. Together with Remark~\ref{rmk:characterization}~(1), this allows us to conclude that the coefficient of $s^{i,j}s^{i',j'}$ satisfy $\mathbf{(C1)}$ and $\mathbf{(C2)}$ for $B\times\FlG$.
\end{proof}

\section{Proof of Lemma \ref{lem:twisted_function}}\label{sec:proof_of_Lemma}
After some preparations, we will prove Lemma \ref{lem:twisted_function}, which finishes the proof of the main theorem (Theorem \ref{thm:mirror_Fl}).
Throughout this section, we fix a natural number $N$ and a subset $r_\bullet$ of $\{1,2,\dots,N-1\}$ with $r_1<\cdots<r_l$, and follow Notation~\ref{notation:flag_bundle} and Notation \ref{notation:S-action}~(1).

\subsection{Reduction to the abelian quotient}\label{reduction1}
We reduce Lemma \ref{lem:twisted_function} to an analogous problem for the abelian quotient.

We introduce the $H^*(B\times\FlT)$-valued function 
\begin{multline}\label{eqn:F_T(mu)}
    \Fab(\mu) := \sum_{ \substack{k_i^{(m)}\geq0; \\ 1\leq m\leq l,  1\leq i\leq r_m} } \left[ \prod_{m=1}^l \prod_{i=1}^{r_m} (q^{(m)}_i)^{k^{(m)}_i}e^{\left(k^{(m)}_i+\frac{H^{(m)}_i}{z}\right)t^{(m)}_i} \right] \cdot I^{\mu+H_\bullet^\bullet+k_\bullet^\bullet z}_{\widetilde{V}}(x,z) \\ 
    \cdot \prod_{i=1}^{r_l}\prod_{c=1}^{k_i^{(l)}}\frac{ \prod_{ \substack{\epsilon\colon\mathrm{Chern\ roots} \\ \mathrm{of\ } \mathcal{Q}} } (\mu+H_i^{(l)}+\epsilon+cz) }{(H_i^{(l)}+cz)^N} \cdot \prod_{m=1}^{l-1}\prod_{i=1}^{r_m}\prod_{j=1}^{r_{m+1}}\frac{\prod_{c=-\infty}^{0}(H_i^{(m)}-H_j^{(m+1)}+cz)}{\prod_{c=-\infty}^{k_i^{(m)}-k_j^{(m+1)}}(H_i^{(m)}-H_j^{(m+1)}+cz)}.
\end{multline}
Observe that $\Fab(\mu)$ satisfies the Divisor Equation~\eqref{eqn:Divisor_Equation}, $[ \Fab(\mu) ]_{t^{(m)}_i\to t^{(m)}}$ is $\W$-invariant, and that it is the abelian counterpart of $F(\mu)$ in the sense that
\begin{equation}\label{eqn:F(mu)-F_T(mu)}
    F(\mu) = \left( \left[ \Fab(\mu) \right]_{t^{(m)}_i\to t^{(m)}} \right)_\gt.
\end{equation}

By Corollary \ref{cor:twisted_abelian_nonabelian_correspondence} applied to $E_\G=\mathcal{Q}\boxtimes S^\vee$ and $E_\T=\mathcal{Q}\boxtimes S^\vee_\T$, we conclude that it suffices to show that $z\cdot \Fab(\mu)\in\cL_{B\times\FlT,(\mathcal{Q}\boxtimes S^\vee_\T,e_\mu)}$.

\subsection{Reduction to a specific choice of $I_{\widetilde{V}}^{\lambda_\bullet^\bullet}$} 
\label{reduction2}
We note that $F(\mu)$ and $\Fab(\mu)$ depend on the choice of $I_{\widetilde{V}}^{\lambda_\bullet^\bullet}(x,z)$. We show next that proving $z\cdot \Fab(\mu)\in\cL_{B\times\FlT,(\mathcal{Q}\boxtimes S^\vee_\T,e_\mu)}$ for a specific convenient choice implies it for all other choices.   

Fix a $\C$-basis $\{\phi_a\}_{a\in I}$ of $H^*(B)$ and a $\C$-basis $\{g_0=1,g_1(\lambda_\bullet^\bullet),g_2(\lambda_\bullet^\bullet),\dots\}$ of $H^*_\T(\pt)^\W$.
Then, the set $\{\phi_az^bg_c\}_{a\in I, b\geq0, c\geq0}$ forms the $\C$-basis of $H^*_\T(\widetilde{V})^\W[z]$, and let $\widetilde{\btau}=\{\tau^{a,b,c}\}_{a\in I,b\geq0,c\geq0}$ denote the dual coordinates.
We define $\J_{\widetilde{V}}^{\lambda_\bullet^\bullet}(\widetilde{\btau})$ to be the $H^*_\T(\pt)[\![Q_B,\widetilde{\btau}]\!]$-valued $\W$-invariant point on $z^{-1}\cL_{\widetilde{V}}$ of the form
\[
z\cdot \J_{\widetilde{V}}^{\lambda_\bullet^\bullet}(\widetilde{\btau}) = z + \sum_{a\in I,b\geq0,c\geq0} \tau^{a,b,c}\phi_az^bg_c + O(z^{-1}).
\]
In fact, this point is \emph{miniversal}, i.e., for any $\W$-invariant point $z\cdot I(x)$ on $\cL_{\widetilde{V}}$, there exist $\tau^{a,b,c}(x)\in\C[\![Q_B,x]\!]$ ($a\in I,b\geq0,c\geq0$) such that $\J_{\widetilde{V}}(\widetilde{\btau}(x))=I(x)$. 
In other words, every $\W$-invariant point on $\cL_{\widetilde{V}}$ can be written as a pullback of $\J_{\widetilde{V}}^{\lambda_\bullet^\bullet}(\widetilde{\btau})$.
Hence we may assume that $I_{\widetilde{V}}^{\lambda_\bullet^\bullet}=\J_{\widetilde{V}}^{\lambda_\bullet^\bullet}(\widetilde{\btau})$.

Let $\btau=\{\tau^{a,b,0}\}_{a\in I,b\geq0}$ denote the coordinates on $H^*(\widetilde{V})[z]$.
The following lemma reduces Lemma \ref{lem:twisted_function} to the case where $I_{\widetilde{V}}^{\lambda_\bullet^\bullet}=\J_{\widetilde{V}}^{\lambda_\bullet^\bullet}(\btau)$.

\begin{lemma}\label{lemma_reduction2}
    Let $F_{\operatorname{ab},1}(\mu,\btau,z)$ (resp. $F_{\operatorname{ab},2}(\mu,\widetilde{\btau},z)$) be the function \eqref{eqn:F_T(mu)} with $I_{\widetilde{V}}^{\lambda_\bullet^\bullet}=\J_{\widetilde{V}}^{\lambda_\bullet^\bullet}(\btau)$ (resp. $I_{\widetilde{V}}^{\lambda_\bullet^\bullet}=\J_{\widetilde{V}}^{\lambda_\bullet^\bullet}(\widetilde{\btau})$).
    Let $F(\mu,\widetilde{\btau},z)$ be the function \eqref{eqn:twisted_F} with $I_{\widetilde{V}}^{\lambda_\bullet^\bullet}=\J_{\widetilde{V}}^{\lambda_\bullet^\bullet}(\widetilde{\btau})$.
    Then,
    \begin{align}
        F_{\operatorname{ab},2}(\mu,\widetilde{\btau},z) &= \Delta(\widetilde{\btau},z\partial_\btau,z\partial_{t_\bullet^\bullet},z) \cdot F_{\operatorname{ab},1}(\mu,\btau,z), \label{eqn:first_claim}\\
        F(\mu,\widetilde{\btau},z) &= \left( \left[ F_{\operatorname{ab},2}(\mu,\widetilde{\btau},z) \right]_{t^{(m)}_i\to t^{(m)}} \right)_\gt,\label{eqn:second_claim}
    \end{align}
    where $\Delta(\widetilde{\btau},z\partial_\btau,z\partial_{t_\bullet^\bullet},z)$ is the differential operator
    \[
    \exp\left( z^{-1} \sum_{a\in I} \sum_{b\geq0} z\partial_{\tau^{a,b,0}} \sum_{c>0} \tau^{a,b,c} \cdot g_c(z\partial_{t_\bullet^\bullet}) \right).
    \]
    In particular, if $z\cdot F_{\operatorname{ab},1}(\mu,\btau,z)$ lies in $\cL_{B\times\FlT,(Q\boxtimes S^\vee_\T,e_\mu)}$, then $z\cdot F(\mu,\widetilde{\btau},z)$ lies in $\cL_{B\times\FlG,(Q\boxtimes S^\vee,e_\mu)}$.
\end{lemma}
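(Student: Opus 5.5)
The plan has three steps: (i) describe $\J_{\widetilde{V}}^{\lambda_\bullet^\bullet}(\widetilde{\btau})$ as a differential operator applied to $\J_{\widetilde{V}}^{\lambda_\bullet^\bullet}(\btau)$; (ii) transport this through the substitution $\lambda^{(m)}_i\mapsto \mu\delta_{m,l}+H^{(m)}_i+k^{(m)}_iz$ termwise in \eqref{eqn:F_T(mu)}; (iii) use that the $\gt$-modification commutes with the resulting operator.

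For step (i), I use that $\cL_{\widetilde{V}}$ is a cone over $H^*_\T(\pt)$, so multiplication by $g_c(\lambda_\bullet^\bullet)$ behaves like a constant. Consider
\[
\Delta_0:=\exp\Bigl(z^{-1}\sum_{a,b}\sum_{c>0}\tau^{a,b,c}\,g_c(\lambda_\bullet^\bullet)\,z\partial_{\tau^{a,b,0}}\Bigr).
\]
Since $g_c(\lambda)$ is $\W$-invariant and polynomial in the equivariant parameters, Lemma~\ref{lem:differential_operator_preserves_cone} (applied equivariantly, with $\lambda$ treated as scalars) shows that $z\Delta_0\J_{\widetilde{V}}(\btau)$ is a $\W$-invariant point of $\cL_{\widetilde{V}}$. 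Its $O(z^{\geq0})$ part is
\[
z+\sum_{a,b}\tau^{a,b,0}\phi_az^b+\sum_{a,b,c>0}\tau^{a,b,c}g_c\phi_az^b+O(z^{-1}),
\]
because $z\partial_{\tau^{a,b,0}}$ acting on $z+\sum\tau^{a,b,0}\phi_az^b$ gives $\phi_az^{b+1}$, and the factor $z^{-1}$ restores $z^b$. The higher corrections are $O(z^{-1})$, since derivatives of the $O(z^{-1})$ tail stay $O(z^{-1})$ after one derivative. By uniqueness of a point on the cone with prescribed nonnegative part, $\J_{\widetilde{V}}(\widetilde{\btau})=\Delta_0\J_{\widetilde{V}}(\btau)$.

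For step (ii), the coefficient of $\prod(q_i^{(m)})^{k_i^{(m)}}$ in $F_{\mathrm{ab},2}$ contains $\J(\widetilde{\btau})$ evaluated at $\lambda=\mu+H+kz$, multiplied by the remaining factors and by $e^{\sum (k+H/z)t}$. These other factors do not involve $\btau$. On this coefficient, $g_c(z\partial_{t_\bullet^\bullet})$ acts as multiplication by $g_c(H_\bullet^\bullet+k_\bullet^\bullet z)$, since $z\partial_{t^{(m)}_i}$ hits $e^{(k_i^{(m)}+H_i^{(m)}/z)t_i^{(m)}}$ and produces $H_i^{(m)}+k_i^{(m)}z$. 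This matches $\Delta_0$ after substitution, except for the shift by $\mu$ in the $l$-th block. I would handle that shift by including the factor $e^{\mu\sum_i t^{(l)}_i/z}$, or by checking how $\mu$ enters. The cleanest option is to choose $\{g_c\}$ so that the operator commutes with the shift. Alternatively, one can note that $\mu$ enters $I^{\mu+\cdots}$ only via $\lambda^{(l)}$, and the paper's operator uses $z\partial_t$. I expect this bookkeeping of the $\mu$-shift to be the main obstacle, and I would first try to verify that it is consistent with how $\Delta$ is written, adjusting by a $\mu$-dependent change of basis $g_c(\lambda+\mu)$ if necessary. This yields \eqref{eqn:first_claim}.

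For step (iii), \eqref{eqn:second_claim} is simply \eqref{eqn:F(mu)-F_T(mu)} with the particular choice of $I$. For the final assertion, $\Delta$ is of the form $e^{D/z}$ with $D|_{\widetilde{\btau}_{c>0}=0}=0$. Lemma~\ref{lem:differential_operator_preserves_cone} therefore shows that $z F_{\mathrm{ab},2}$ lies in the twisted cone $\cL_{B\times\FlT,(\mathcal{Q}\boxtimes S^\vee_\T,e_\mu)}$; the lemma applies to twisted cones as well, being a formal consequence of the overruled structure. $F_{\mathrm{ab},2}$ still satisfies the Divisor Equation, since $\Delta$ commutes with $z\partial_t-zq\partial_q-H$ on each coefficient. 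Its specialization is $\W$-invariant, since each $g_c$ is $\W$-invariant. Corollary~\ref{cor:twisted_abelian_nonabelian_correspondence} together with \eqref{eqn:second_claim} then gives the conclusion.
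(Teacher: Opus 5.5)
\textbf{Where the proposal falls short.} Your route is the one the paper intends; its proof of \eqref{eqn:first_claim} is only ``a direct computation''. Your step (i) supplies the key part correctly: you prove $\J^{\lambda}_{\widetilde V}(\widetilde\btau)=\Delta_0(\lambda)\,\J^{\lambda}_{\widetilde V}(\btau)$ by matching the parts of nonnegative $z$-degree and using that a point of $\cL_{\widetilde V}$ is determined by its $\bt$. Your observation that $z\partial_{t^{(m)}_i}$ acts on each $q$-coefficient as multiplication by $H^{(m)}_i+k^{(m)}_iz$ is also correct.

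The gap is the $\mu$-shift, which you leave open, and it is not bookkeeping. Substituting $\lambda^{(l)}_i\mapsto\mu+H^{(l)}_i+k^{(l)}_iz$ into step (i) produces $g_c$ evaluated at the $\mu$-shifted point. The operator $\Delta$ as displayed produces $g_c(H^\bullet_\bullet+k^\bullet_\bullet z)$ instead. So \eqref{eqn:first_claim} is false as written when $\mu\neq0$.

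A concrete check: take $B=\pt$, $r_\bullet=\{1\}$, $g_c=\lambda^c$, write $H=H^{(1)}_1$, $t=t^{(1)}_1$, and keep only the $\tau^{0,0,c}$ nonzero. Twisting by the trivial bundle $\C^n_\lambda$ does not change the cone of a point, so $\J^\lambda_{\widetilde V}(\widetilde\btau)=e^{s(\lambda)/z}$ with $s(\lambda)=\sum_{c\geq0}\tau^{0,0,c}\lambda^c$. The $q^0$-coefficients of the two sides of \eqref{eqn:first_claim} are then $e^{Ht/z}e^{s(\mu+H)/z}$ and $e^{Ht/z}e^{s(H)/z}$, which differ.

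Your other two suggestions do not rescue the identity as written:
\begin{itemize}
\item No choice of basis $\{g_c\}$ helps, since any basis of $\C[\lambda^\bullet_\bullet]^\W$ contains functions of $\lambda^{(l)}$ that are not translation invariant.
\item Inserting $e^{\mu\sum_i t^{(l)}_i/z}$ is just the conjugation $e^{-\mu\sum_i t^{(l)}_i/z}\,g_c(z\partial_{t^\bullet_\bullet})\,e^{\mu\sum_i t^{(l)}_i/z}=g_c(z\partial_{t^\bullet_\bullet}+\mu\delta_{\bullet,l})$. That changes the operator rather than proving the displayed identity.
\end{itemize}

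\textbf{How to fix it.} Commit to your last option and prove the corrected statement. In $\Delta$, replace $g_c(z\partial_{t^\bullet_\bullet})$ by $g_c(z\partial_{t^\bullet_\bullet}+\mu)$, adding $\mu$ in exactly the blocks where it is added in $I^{\mu+H^\bullet_\bullet+k^\bullet_\bullet z}_{\widetilde V}$ (the $l$-th block in your reading). With that change, your steps (i) and (ii) are a complete proof. The paper's one-line argument does not address this point either.

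The ``in particular'' clause, which is all that is used afterwards, survives unchanged:
\begin{itemize}
\item The corrected operator is still $e^{D/z}$, with $D$ polynomial in $\mu$ and $z\partial_{t^\bullet_\bullet}$, linear in the $z\partial_{\tau^{a,b,0}}$, and vanishing when all $\tau^{a,b,c}$ with $c>0$ vanish. So Lemma \ref{lem:differential_operator_preserves_cone} applies on the twisted cone.
\item It commutes with $z\partial_{t^{(m)}_i}-zq^{(m)}_i\partial_{q^{(m)}_i}-H^{(m)}_i$, so the Divisor Equation is preserved.
\item The specialization of $F_{\operatorname{ab},2}$ is $\W$-invariant, because it is \eqref{eqn:F_T(mu)} with a $\W$-invariant input.
\end{itemize}
Your step (iii) then goes through as written.
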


\begin{proof}
    Equation \eqref{eqn:first_claim} follows from a direct computation; see the proof of \cite[Lemma 5.5]{Koto:mirror} for a similar computation.
    The second equation \eqref{eqn:second_claim} follows from the equations \eqref{eqn:first_claim} and \eqref{eqn:F(mu)-F_T(mu)}.
\end{proof}

\subsection{Quantum Riemann-Roch operators}
We introduce differential operators $A(\mu,z\partial_{t^{(l)}_i},z)$ ($1\leq i\leq r_l$) and summarize its properties which will be needed.
    
Let $\mu$ and $y$ be formal variables.
Note that $\mu$ will be identified with the equivariant parameter for $\C^\times$ acting on $\mathcal{Q}\boxtimes S^\vee$.
We define $A(\mu,y,z)$ to be
\[
\exp\left( \frac{\rank(\mathcal{Q})}{z} \cdot \left( \left( \mu+y+\frac{z}{2} \right) \cdot \log\left(1+\frac{y}{\mu}\right) - y \right) + \frac{z\partial_\mathcal{Q}}{z} \cdot \log \left( 1+\frac{y}{\mu} \right) \right)
\]
where $\partial_{\mathcal{Q}}$ denotes the vector field on the $(\tau^i\colon i\in I)$-space such that $\partial_{\mathcal{Q}}\sum_{i\in I}\tau^i\phi_i=c_1(\mathcal{Q})$, and $\log(1+y/\mu)$ is considered as an element of $\C[y][\![\mu^{-1}]\!]$ by expanding it at $\mu=\infty$.

\begin{lemma}[{\cite[Lemma 5.6]{Koto:mirror}}]\label{lem:QRR_operators}
    Let $1\leq i\leq r_l$.
    \begin{itemize}
        \item[$(1)$]
        The operator $A(\mu,z\partial_{t^{(l)}_i},z)$ is a well-defined element of $\C[z^{-1},z\partial_\tau,z\partial_{t^{(l)}_i}][\![\mu^{-1}]\!]$ such that $A|_{\mu^{-1}=0}=1$.
        In particular, $A(\mu,z\partial_{t^{(l)}_i},z)$ preserves (twisted) Givental cones.
        \item[$(2)$]
        For any $k\geq0$, we have
        \[
        A(\mu,H^{(l)}_i+kz,z)\cdot\frac{\Delta_{\mathcal{Q}}^{\mu+H_i^{(l)}+kz}}{\Delta_{\mathcal{Q}}^{\mu+H_i^{(l)}}} = \prod_{c=1}^{k}\prod_{ \substack{\epsilon\colon\mathrm{Chern\ roots} \\ \mathrm{of\ } \mathcal{Q}} } (\mu+H_i^{(l)}+\epsilon+cz).
        \]
    \end{itemize}
\end{lemma}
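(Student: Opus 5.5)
The plan is to treat the two parts separately: part $(1)$ is an expansion at $\mu=\infty$, and part $(2)$ is a Stirling-type identity obtained by comparing the Coates--Givental exponent of $\Delta_{\mathcal{Q}}$ with the asymptotic expansion of $\log\Gamma$.

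For $(1)$, I expand $\log(1+y/\mu)=\sum_{n\ge1}(-1)^{n-1}y^n/(n\mu^n)$ and compute
\[
(\mu+y)\log\left(1+\frac{y}{\mu}\right)-y=\sum_{n\ge2}\frac{(-1)^n y^n}{n(n-1)\mu^{n-1}} .
\]
Hence the first part of the exponent lies in $\mu^{-1}\C[z^{-1},y][\![\mu^{-1}]\!]$. The term $\frac{z}{2}\cdot\frac1z\log(1+y/\mu)$ has no $z$ at all, and the last term is $z^{-1}\,z\partial_{\mathcal{Q}}\cdot\mu^{-1}\C[y][\![\mu^{-1}]\!]$. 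Each coefficient of $\mu^{-k}$ in the exponent is therefore a polynomial in $z^{-1}$, $z\partial_\tau$ and $y$, and the exponent vanishes at $\mu^{-1}=0$. So the exponential converges $\mu^{-1}$-adically, and $A|_{\mu^{-1}=0}=1$.

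Substituting $y=z\partial_{t^{(l)}_i}$, I write $A=e^{D/z}$ with
\[
D\in\C[z,z\partial_\tau,z\partial_{t^{(l)}_i}][\![\mu^{-1}]\!],\qquad D|_{\mu^{-1}=0}=0 .
\]
Lemma~\ref{lem:differential_operator_preserves_cone}, with $\mu^{-1}$ playing the role of the extra formal parameter $y$, then shows that $A$ preserves the (twisted) cones.

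For $(2)$, I use the splitting principle to write $\mathrm{ch}_l(\mathcal{Q})=\sum_\epsilon\epsilon^l/l!$. Ignoring for now the Novikov rescaling, the exponent of $\Delta_{\mathcal{Q}}^\lambda$ becomes $\sum_\epsilon G(\lambda+\epsilon)$ minus the terms linear in the shift $\epsilon$. Here $G(x)$ is the Bernoulli tail of the Stirling series,
\[
G(x)=\sum_{m\ge2}\frac{B_m}{m(m-1)}\left(\frac{-z}{x}\right)^{m-1}\cdot\frac1z\quad(\text{up to sign conventions}).
\]
This is checked coefficientwise by Taylor-expanding $s_{l+m-1}(\lambda)$ in $\epsilon$: the relevant expansion is $\sum_l (l+m-2)!\,\epsilon^l/l!\,(-\lambda)^{-l-m+1}\propto(\lambda+\epsilon)^{-(m-1)}$.

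Next I use the standard identity for $x=\mu+H^{(l)}_i+\epsilon$,
\[
\log\prod_{c=1}^k(x+cz)=\mathcal{S}(x+kz)-\mathcal{S}(x),\qquad \mathcal{S}(x)=\frac{1}{z}\Big(\big(x+\tfrac z2\big)\log x-x\Big)+G(x),
\]
which is the asymptotic form of $\Gamma(x/z+k+1)/\Gamma(x/z+1)$. The $G$-parts of this identity are exactly what the ratio $\Delta_{\mathcal{Q}}^{\mu+H_i^{(l)}+kz}/\Delta_{\mathcal{Q}}^{\mu+H_i^{(l)}}$ supplies. The leading parts are $\frac1z(x+\frac z2)\log x-\frac{x}{z}$. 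Expanding these in $\epsilon$ to first order around $\mu$, and summing over the Chern roots $\epsilon$, produces the term $\operatorname{rank}(\mathcal{Q})\cdot[\cdots]/z$ together with $c_1(\mathcal{Q})\log(1+y/\mu)/z$. The latter is exactly the contribution of $z\partial_{\mathcal{Q}}$, once the operator acts on a function whose $\tau$-dependence is through $e^{\tau/z}$-type factors. The $\log\mu$ constants cancel between the two endpoints.

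I expect the main obstacle to be the bookkeeping at this last step. Three things must be reconciled: the higher-order-in-$\epsilon$ terms of the leading Stirling part must combine with the Bernoulli terms coming from $\Delta$; the Novikov substitution $Q^d\mapsto Q^d\lambda^{-d\cdot c_1}$ must be tracked (its ratio is exactly what converts $z\partial_{\mathcal{Q}}$ into its action); and the base points of the logarithms must match. To control all three, I would first verify the case $k=1$ explicitly, using the difference equation $\mathcal{S}(x+z)-\mathcal{S}(x)=\log(x+z)$. I would then telescope in $k$, checking at each step that the normalization at $k=0$ is consistent with the conventions of \cite[Lemma 5.6]{Koto:mirror}.
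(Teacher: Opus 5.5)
The paper does not prove this lemma itself; it quotes \cite[Lemma 5.6]{Koto:mirror}. So what follows compares your proposal with what a correct proof needs.

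\textbf{Part (1).} Your argument is correct and standard. The expansion $(\mu+y)\log(1+y/\mu)-y=\sum_{n\ge2}(-1)^n y^n/(n(n-1)\mu^{n-1})$ gives $z\log A\in\mu^{-1}\C[z,z\partial_\tau,z\partial_{t^{(l)}_i}][\![\mu^{-1}]\!]$. Lemma~\ref{lem:differential_operator_preserves_cone} then applies with $\mu^{-1}$ as the extra parameter.

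\textbf{Part (2) has a genuine gap.} It sits in the bookkeeping you defer.

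First, your description of $\Delta^\lambda_{\mathcal Q}$ is wrong. The $m=0,1$ terms are not linear in $\epsilon$; they equal $\frac1z\bigl((\lambda+\epsilon)\log(1+\epsilon/\lambda)-\epsilon\bigr)+\frac12\log(1+\epsilon/\lambda)$. The $m\ge2$ terms give $G(\lambda+\epsilon)$ with $G(x)=\sum_{m\ge2}\frac{B_m}{m(m-1)}(z/x)^{m-1}$; your $G$ carries a spurious factor $-1/z$. Altogether, on the multiplicative part,
\[
\log\Delta^\lambda_{\mathcal Q}=\sum_\epsilon\Bigl[\mathcal{S}(\lambda+\epsilon)-\tfrac1z\bigl(\lambda+\epsilon+\tfrac z2\bigr)\log\lambda+\tfrac{\lambda}{z}\Bigr].
\]
So nothing needs to be expanded to first order in $\epsilon$ or matched against Bernoulli terms. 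What remains is affine in $\epsilon$, with base point $\log\lambda$.

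Second, the base points do not cancel as you assert. $A$ is built on $\log\mu$, whereas $\Delta^{\mu+H^{(l)}_i+kz}_{\mathcal Q}/\Delta^{\mu+H^{(l)}_i}_{\mathcal Q}$ is built on $\log(\mu+H^{(l)}_i+kz)$ and $\log(\mu+H^{(l)}_i)$. Carrying the computation out (with $z\partial_{\mathcal Q}$ acting as $c_1(\mathcal Q)$) gives, for $r=\rank\mathcal Q$,
\[
\text{left-hand side}=\mu^{-rk}\,A(\mu,H^{(l)}_i,z)\prod_{c=1}^k\prod_\epsilon(\mu+H^{(l)}_i+\epsilon+cz).
\]
Read literally, the case $k=0$ asserts $A(\mu,H^{(l)}_i,z)=1$, which is false. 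So no telescoping from $k=1$ can close your argument.

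\textbf{The missing idea.} The same formula gives $A(\mu,y,z)\,\Delta^{\mu+y}_{\mathcal Q}=\Delta^\mu[y]$, where $\Delta^\mu[y]$ is the operator with parameter $\mu$ and Chern roots $y+\epsilon$. In other words, $A$ moves the base point from $\mu+y$ to $\mu$.

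Hence the numerator in (2) is literal, since it comes from substituting $\lambda^{(l)}_i=\mu+H^{(l)}_i+kz$. The denominator, however, must be $\Delta^\mu[H^{(l)}_i]=\Delta^\mu_{\mathcal Q\otimes\mathcal O(H^{(l)}_i)}$. This is the factor of $\Delta^\mu_{\mathcal Q\boxtimes S^\vee_\T}$ actually used in \S\ref{subsec:finish}.

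With that reading, your identity $\mathcal{S}(x+kz)-\mathcal{S}(x)=\log\prod_{c=1}^k(x+cz)$ immediately gives
\[
\text{left-hand side}=\mu^{-rk}\prod_{c,\epsilon}(\mu+H^{(l)}_i+\epsilon+cz).
\]
The factor $\mu^{-rk}$ is not a constant that cancels. It is removed by the Novikov substitution $q^{(l)}_i\mapsto\mu^{-r}q^{(l)}_i$ of $\Delta^\mu_{\mathcal Q\otimes\mathcal O(H^{(l)}_i)}$, whose $\mathrm{ch}_1$ contains $rH^{(l)}_i$; your proposal does not track this. The $Q_B$-rescalings then cancel against those produced by $\exp(\partial_{\mathcal Q}\log(1+y/\mu))$, as you anticipated.
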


\subsection{Completion of the proof}\label{subsec:finish}
We finish the proof of Lemma \ref{lem:twisted_function} using the reductions discussed in subsections \ref{reduction1} and \ref{reduction2}.
The argument is similar to the proofs of \cite[Theorem 3.3]{IK:quantum} and \cite[Theorem 5.1]{Koto:mirror}.

First, the exact sequence \eqref{eqn:exact_sqn} shows that in the $\T$-equivariant $K$-group of vector bundles on $B$, the class of $\widetilde{V} \oplus \operatorname{Hom}(\mathcal{O}_B^{\oplus r_l},\mathcal{Q})=\widetilde{V} \oplus \mathcal{Q}^{\oplus r_l}$ equals the class of a trivial bundle of the same rank (with nontrivial $\T$-action). Then \cite[Lemma 2.9]{Koto:mirror} implies that
\begin{equation}\label{eqn:function}
    z \cdot \left( \prod_{i=1}^{r_l} \Delta_{\mathcal{Q}}^{\lambda_i^{(l)}} \right) \J_{\widetilde{V}}^{\lambda_\bullet^\bullet}(\btau,z)
\end{equation}
is an $\C[\![Q_B,\btau,(\lambda^{(l)}_1)^{-1},\dots,(\lambda^{(l)}_{r_l})^{-1}]\!]^\W$-valued point on $\cL_B$.
Applying Lemma \ref{lem:moving_points} to the function~\eqref{eqn:function} (with the set of variables $x$ taken as the set of elementary symmetric polynomials in $\{(\lambda^{(l)}_i)^{-1}\}_{i=1}^{r_l}$), we obtain a differential operator $D(\lambda_\bullet^\bullet)\in\sum_{i\in I}\C[z][\![Q_B,\btau,(\lambda^{(l)}_1)^{-1},\dots,(\lambda^{(l)}_{r_l})^{-1}]\!]^\W z\partial_{\tau^i}$ such that
\[
\left( \prod_{i=1}^{r_l} \Delta_{\mathcal{Q}}^{\lambda_i^{(l)}} \right) \J_{\widetilde{V}}^{\lambda_\bullet^\bullet}(\btau,z) = e^{D(\lambda^\bullet_\bullet)/z}J_B(\tau,z),
\]
where $\tau=\{\tau^{i,0,0}\}_{i\in I}$ denotes the coordinates on $H^*(B,\C)$ and $J_B(\tau,z)$ denotes the $J$-function \eqref{eqn:J-function} of $B$.

We consider the $H^*(B\times\FlT)$-valued function 
\[\Fab'(\mu):=
\left(\Delta_{\mathcal{Q}\boxtimes S_\T^\vee}^\mu\right)^{-1} \cdot 
\left( \prod_{i=1}^{r_l}  {A(\mu,z\partial_{t^{(l)}_i},z)} \right) \cdot  e^{D(\mu+z\partial_{t_\bullet^\bullet})/z} I_{B\times\FlT} 
\]
where $I_{B\times\FlT}$ denotes Brown's (non-equivariant) $I$-function recalled in Section \ref{subsec:Brown}.
Using Lemma \ref{lem:QRR_operators} (2), one checks by a direct calculation that $\Fab'(\mu)$ coincides with the function $F_{\operatorname{ab},1}(\mu)$ from Lemma \ref{lemma_reduction2}. 
Theorem \ref{thm:QRR} together with Lemma \ref{lem:QRR_operators} (1) implies that $z\cdot F_{\operatorname{ab},1}(\mu)\in\cL_{B\times\FlT,(\mathcal{Q}\boxtimes S^\vee_\T,e_\mu)}$.
As already noted in $\S 6.1$ and $\S 6.2$, this proves Lemma \ref{lem:twisted_function}.

\bibliographystyle{amsplain}
\bibliography{reference}

\end{document}